\newlength{\drop}
\definecolor{amethyst}{rgb}{0.6, 0.4, 0.8}
\definecolor{burgundy}{rgb}{0.5, 0.0, 0.13}
\newenvironment{proofidea}{%
  \proof}{\endproof}
\newenvironment{layperson}{%
  \proof}{\endproof}
\newtheoremstyle{remboldstyle}
  {}{}{}{}{\bfseries}{.}{.5em}{{\thmname{#1 }}{\thmnumber{#2}}{\thmnote{ (#3)}}}
\theoremstyle{remboldstyle}
\newtheorem{theorem}{Theorem}[section]
\newtheorem{proposition}[theorem]{Proposition}
\newtheorem{corollary}[theorem]{Corollary}
\title{\textbf{Modeling thermal regulation in thin vascular systems: 
A mathematical analysis}}
\author{\textbf{Kalyana B.~Nakshatrala} \\
  {\small Associate Professor, Department of Civil and Environmental Engineering \\
  University of Houston, Houston, Texas 77204, USA.}\\
  {\small email:~\texttt{knakshatrala@uh.edu}, phone: +1-713-743-4418}}
\keywords{thermal regulation; vascular systems; reduced-order modeling; maximum and comparison principles; mathematical analysis; efficiency}
\begin{document}

\begin{titlepage}
  \drop=0.1\textheight
  \centering
  \vspace*{\baselineskip}
  \rule{\textwidth}{1.6pt}\vspace*{-\baselineskip}\vspace*{2pt}
  \rule{\textwidth}{0.4pt}\\[\baselineskip]
       {\Large \textbf{\color{burgundy}
       Modeling thermal regulation in thin vascular systems: \\[0.3\baselineskip]
       A mathematical analysis}}\\[0.3\baselineskip]
       \rule{\textwidth}{0.4pt}\vspace*{-\baselineskip}\vspace{3.2pt}
       \rule{\textwidth}{1.6pt}\\[\baselineskip]
       \scshape
       An e-print of this paper is available on arXiv. \par
       \vspace*{1\baselineskip}
       Authored by \\[\baselineskip]

  {\Large K.~B.~Nakshatrala\par}
  {\itshape Department of Civil \& Environmental Engineering \\
  University of Houston, Houston, Texas 77204. \\
  \textbf{phone:} +1-713-743-4418, \textbf{e-mail:} knakshatrala@uh.edu \\
  \textbf{website:} http://www.cive.uh.edu/faculty/nakshatrala}\\[\baselineskip]

\vspace{-0.15in}   
\begin{figure*}[h]
	\includegraphics[scale=0.2]{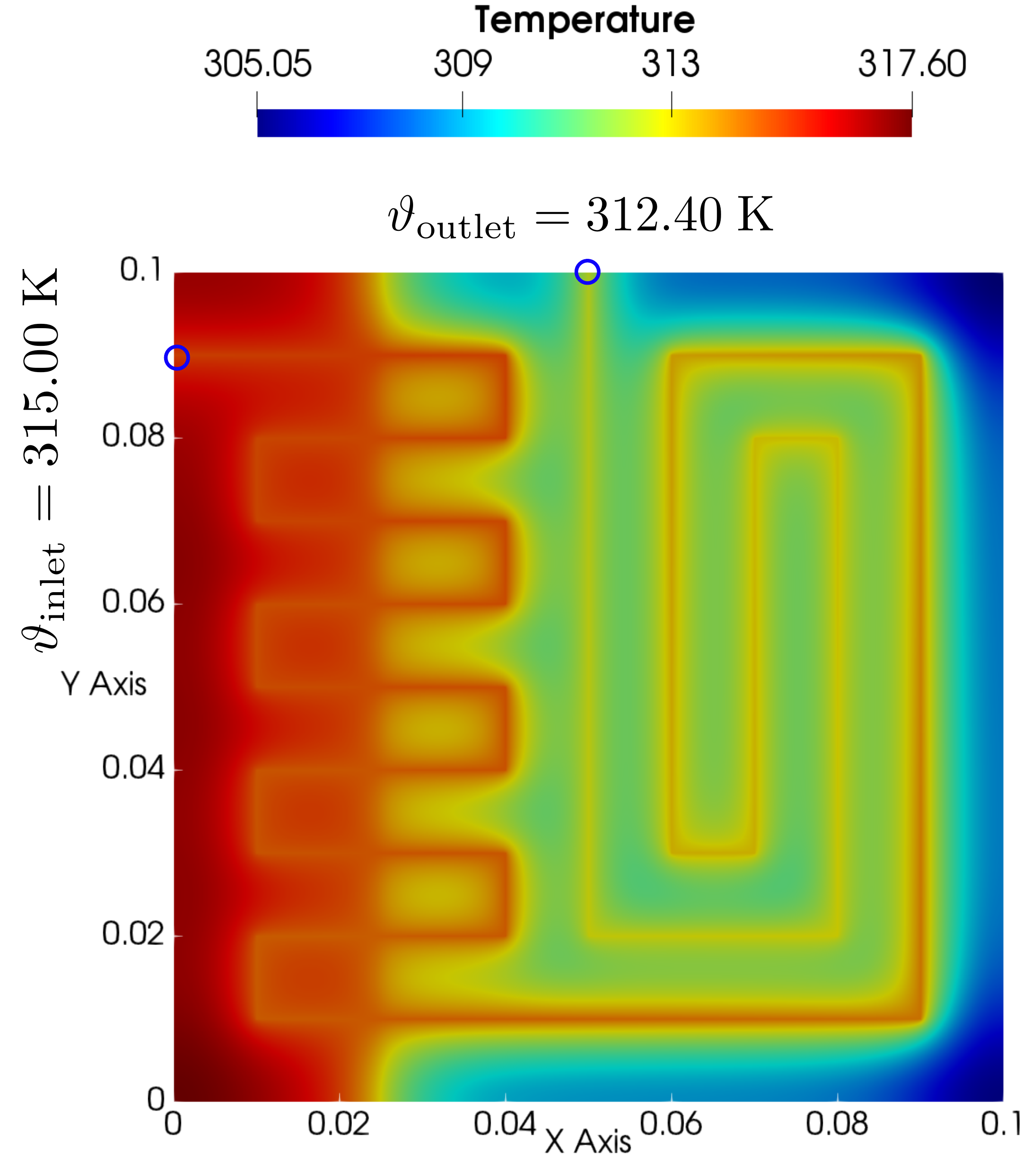}

	\emph{This figure highlights one of the ramifications of letting the inlet temperature $\vartheta_{\mathrm{inlet}}$ differ from the ambient temperature $\vartheta_{\mathrm{amb}}$. The result shows the temperature field when $\vartheta_{\mathrm{inlet}} \geq \vartheta_{\mathrm{amb}}$ and indicates that the inlet temperature is greater than the outlet temperature. On the other hand, not shown in the figure, $\vartheta_{\mathrm{inlet}} \leq \vartheta_{\mathrm{outlet}}$ when $\vartheta_{\mathrm{inlet}} =  \vartheta_{\mathrm{amb}}$.}
\end{figure*}
  \vfill
  {\scshape 2022} \\
  {\small Computational \& Applied Mechanics Laboratory} \par
\end{titlepage}

\begin{abstract}
Mimicking vascular systems in living beings, designers have realized microvascular composites to achieve thermal regulation and other functionalities, such as electromagnetic modulation, sensing, and healing. Such material systems avail circulating fluids through embedded vasculatures to accomplish the mentioned functionalities that benefit various aerospace, military, and civilian applications. Although heat transfer is a mature field, control of thermal characteristics in synthetic microvascular systems via circulating fluids is new, and a theoretical underpinning is lacking. What will benefit designers are predictive mathematical models and an in-depth qualitative understanding of vascular-based active cooling/heating. So, the central focus of this paper is to address the remarked knowledge gap. \emph{First}, we present a reduced-order model with broad applicability, allowing the inlet temperature to differ from the ambient temperature. \emph{Second}, we apply mathematical analysis tools to this reduced-order model and reveal many heat transfer properties of fluid-sequestered vascular systems. We derive point-wise properties (minimum, maximum, and comparison principles) and global properties (e.g., bounds on performance metrics such as the mean surface temperature and thermal efficiency). These newfound results deepen our understanding of active cooling/heating and propel the perfecting of thermal regulation systems. 
\end{abstract}

\maketitle

\vspace{-0.3in}

\setcounter{figure}{0}   


\section*{PRINCIPAL NOTATION}

\vspace{-0.1in}

\begin{longtable}{ll}\hline
  \multicolumn{1}{c}{\textbf{Symbol}} & \multicolumn{1}{c}{\textbf{Quantity}} \\
  \hline \multicolumn{2}{c}{\emph{Geometry-related quantities}} \\ \hline
  $\mathcal{B}$ & three-dimensional body \\ 
  $\Omega$ & domain: mid-surface of the body \\
  $\partial \Omega$ & boundary of the domain \\
  $\Gamma^{\vartheta}$ & part of the boundary with prescribed temperature \\
  $\Gamma^{q}$ & part of the boundary with prescribed heat flux \\
  $\Sigma$ & curve denoting the vasculature \\
  $d$ & thickness of the body \\ 
  $\widehat{\mathbf{n}}(\mathbf{x})$ & unit outward normal vector at $\mathbf{x}$ on the boundary \\
  $s$ & arc-length along $\Sigma$ measured from the inlet \\ 
  $\widehat{\mathbf{t}}(\mathbf{x})$ & unit tangential vector at $\mathbf{x}$ along the vasculature ($\Sigma)$ \\
  $\mathbf{x}$ & a spatial point \\
  \hline \multicolumn{2}{c}{\emph{Solution fields}} \\ \hline
  $\vartheta(\mathbf{x})$ & temperature (scalar) field \\
  $\mathbf{q}(\mathbf{x})$ & heat flux (vector) field \\
  \hline \multicolumn{2}{c}{\emph{Prescribed quantities}} \\ \hline
  $\vartheta_{\mathrm{amb}}$ & ambient temperature \\ 
  $\vartheta_{\mathrm{inlet}}$ & temperature at the inlet \\ 
  $\vartheta_{\mathrm{p}}(\mathbf{x})$ & prescribed temperature on the boundary \\ 
  $f(\mathbf{x})$ & applied heat flux \\
  $f_0$ & constant applied heat flux \\ 
  $\dot{m}$ & mass flow rate in the vasculature \\
  $q_{\mathrm{p}}(\mathbf{x})$ & prescribed heat flux on the boundary \\ 
  \hline \multicolumn{2}{c}{\emph{Material and surface properties}} \\ \hline
  $\varepsilon$ & emissivity \\
  $c_f$ & heat capacity of the fluid flowing 
  within the vasculature \\
  $h_{T}$ & convective heat transfer coefficient \\
  $\mathbf{K}(\mathbf{x})$ & thermal conductivity tensor field \\
  \hline \multicolumn{2}{c}{\emph{Other symbols}} \\
  \hline
  $\llbracket \cdot \rrbracket$ & jump operator across the vasculature $\Sigma$ \\
  $\epsilon$ & symbol introduced to define a limiting process \\
  $\eta^{e}$ & efficiency \\ 
  $\vartheta_{\mathrm{HSS}}$ & hot steady-state temperature  \\
  $\vartheta_{\mathrm{mean}}$ & mean temperature \\ 
  $\sigma$ & Stefan-Boltzmann constant $\approx 5.67\times 10^{-8} \; [\mathrm{W/m^{2}/K^4}]$ \\
  $\chi$ & heat capacity rate of the fluid, $\chi = \dot{m} \, c_f$ \\
  $\mathrm{div}[\cdot]$ & spatial divergence operator \\
  $\mathrm{grad}[\cdot]$ & spatial gradient operator \\
  \hline \multicolumn{2}{c}{\emph{Abbreviations}} \\ \hline
  %
  \textsf{HSS} & hot steady-state \\
  \textsf{PDE} & partial differential equation \\
  \hline
\end{longtable}

\vspace{-0.1in}

\section{INTRODUCTION AND MOTIVATION}
\label{Sec:S1_ROM_Qualitative_Intro}

\lettrine[findent=2pt]{\fbox{\textbf{T}}}{hermal regulation} is vital to homeostasis, which is essential for the survival of living organisms. Homeostasis, a biology term, refers to maintaining controlled physical and chemical conditions within the body and organs \citep{reece2014campbell}. One of the mechanisms by which animals control their internal temperature is via the circulation of blood through a pervasive vasculature comprising arteries, veins, and capillaries. Such fluid circulation can facilitate either \emph{active cooling} or \emph{active heating}. For example, jackrabbits adapt to adverse desert climate via active cooling: they use blood flow in their large ears exposed to the surroundings to lower their body temperature \citep{hill1976jackrabbit}. On the other hand, seagulls handle ``cold feet" via active heating: the flow of warm blood in arteries and veins maintains core body temperature while standing in the snow \citep{scholander1957countercurrent,livingbirdmagazine}. Humans, too, maintain a healthy internal temperature via blood circulation through a network of vessels \citep{hall2020guyton}.

Even in the synthetic world, thermal regulation is essential to the success of many technological endeavors: high-powered antennas \citep{lyall2008experimental}, microchip electronics \citep{dede2018thermal}, hypersonic space vehicles \citep{gou2019design}, battery packing \citep{pety2017active}, to name a few. By embracing biomimicry, researchers have realized thermal regulation in structural systems by embedding a network of vesicles through which fluids are circulated \citep{kozola2010characterization,rocha2009tree}. With such experimental platforms at their disposal, engineers are looking to achieve optimal designs and gain a deeper understanding of the underlying physics. But thermal regulation in fluid-sequestered vascular systems is a complicated phenomenon. Geometrical attributes (vasculature layout, spacing), material properties (conductivity of the host material, surface heat transfer characteristics, and heat capacity and density of the fluid), and prescribed inputs (inlet temperature, ambient temperature, heat source, boundary conditions, flow rate within the vasculature) affect the performance of a thermal regulating system. 

Prior works to unravel thermal regulation in synthetic materials can be classified into three broad categories. Under the \emph{first} category, researchers focused on fabrication techniques for creating a network of vesicles in fiber-reinforced composites (e.g., the VaSC---vaporization of sacrificial components---technique) \citep{esser2011three}. The \emph{second} set of studies focused on developing numerical formulations and demonstrating the capabilities of these formulations in handling complex vasculatures \citep{tan2015nurbs}. The \emph{third} set of studies employed optimization techniques, such as shape and topology optimization, to get optimal vasculature layouts \citep{mcelroy2015optimisation,tan2016gradient,pejman2019gradient}. 

These tools---fabrication techniques, experimental platforms, and computational algorithms---are essential contributions. But given the complexity and myriad of variables affecting the phenomenon, what advances the field further are general qualitative properties that are not specific to a boundary value problem or an experimental setup. On that account, this paper (a) develops such qualitative properties using mathematical analysis and (b) studies the ramifications of these properties on thermal management in vascular systems. 

To undertake such an analysis, we need an appropriate mathematical model. A nonlinear coupled model involving energy-momentum equations---the so-called conjugate heat transfer---will completely resolve all the underlying processes \citep{perelman1961conjugated,dorfman2009conjugate}. A mathematical analysis of such a model is daunting because of the nonlinear and coupled nature of the governing equations. However, when the vasculature is thin, and the flow within is laminar, one can simplify the analysis. In the literature, a reduced-order model, approximating the heat transfer across the fluid and host solid, has been widely used, primarily for obtaining numerical solutions; for instance, the works by \citet{aragon2010generalized} and \citet{tan2015nurbs}. This paper will use such a reduced-order model in the mathematical analysis, but with some generalizations. 

Previous modeling efforts, and even experimental studies, considered the fluid's temperature at the inlet to be the same as the ambient temperature \citep{pejman2019gradient,devi2021microvascular}. However, such a requirement is stringent and limits the possibilities, especially for thermal regulation in harsh environments. Also, from a practical point of view, it is not always possible to have the fluid's temperature be the same as ambient. Specifically, ambient temperatures can be extreme in space applications. For instance, in deep space and planetary explorations, a spacecraft can experience temperatures that are a few dozen Kelvins over the absolute zero \citep{gilmore2002spacecraft}. In contrast, temperature at the belly of a reentry vehicle can exceed 2000 K \citep{finke1990calculation}. Thus, letting the inlet and ambient temperatures differ provides more flexibility in regulating the temperature field. Ergo, this paper presents a mathematical model with the said flexibility and discusses the associated ramifications. The resulting model furnishes a boundary value problem involving a second-order nonlinear elliptic partial differential operator with a unique jump condition arising from the energy balance, involving tangential derivative along the vasculature. 

Second-order elliptic partial differential equations (PDEs) satisfy unique properties such as maximum principles, comparison principles, and monotone properties \citep{pao2012nonlinear}. These principles provide general qualitative properties on the nature of the solutions without actually solving the boundary value problems; such an approach is particularly fruitful in studying nonlinear PDEs, as analytical solutions are scarce \citep{gilbarg2015elliptic}.
Such principles also find immense use in studying engineering problems and developing accurate numerical formulations. For example, in the theory of torsion, the occurrence of the maximum shear stress on the boundary can be shown to be a consequence of a maximum principle \citep{love2013treatise,sperb1981maximum,selvadurai2000partial}. A topic of great interest in computational mechanics is developing numerical formulations that preserve these principles. For instance, Nakshatrala and co-workers have developed numerical formulations for diffusion-type equations that satisfy maximum principles \citep{nakshatrala2013numerical,mudunuru2016enforcing,karimi2016current,nakshatrala2016numerical}. Holding such fundamental mathematical principles in the discrete setting is often considered essential for deeming a numerical formulation predictive.

Motivated by their utility and power, this paper derives several such mathematical principles for the chosen reduced-order model. However, due to different \emph{regularity} (smoothness) requirements and the presence of a unique jump condition (along the vasculature) in the reduced-order model, the prior results presented in the literature for second-order PDEs are not directly applicable. Hence, the derivations presented in this paper use alternative strategies. These newly obtained principles describe the general nature of the temperature field without actually solving a boundary value problem and establish the reduced-order model to be well-posed. Given the newfound understanding from the mathematical analysis, this paper reexamines the question: \emph{What is an appropriate definition to assess the efficiency of thermal regulation}? We reveal the deficiencies of the currently used definition of efficiency. Consequently, we provide alternative notions of efficiency for thermal regulation, considering both active cooling and active heating. 

An outline for the rest of this article is as follows. We first provide the governing equations of a reduced-order model for thermal regulation in fluid-sequestered thin vascular systems, allowing different inlet and ambient temperatures (\S\ref{Sec:S2_ROM_Qualitative_GE}). Next, several mathematical properties are derived, neglecting radiation in the reduced-order model (\S\ref{Sec:S3_ROM_Qualitative_Mathematical_analysis}). Using these results, we then get the corresponding properties and ramifications when radiation is included (\S\ref{Sec:S4_ROM_Qualitative_Radiation}). To put the newly obtained results in the light of prior experiments, we consider a particular case: constant heat source and adiabatic lateral boundaries (\S\ref{Sec:S5_ROM_Qualitative_Special_case}). A ramification of inlet and ambient temperatures differing, which addresses a fundamental question of thermal regulation, is then presented (\S\ref{Sec:S6_ROM_TOutlet_LT_TInlet}). After that, the definition of efficiency for thermal regulation is re-examined and alternatives are proposed (\S\ref{Sec:S7_ROM_Qualitative_Efficiency}). The paper closes with concluding remarks, including a brief outline of some relevant future work on thermal regulation (\S\ref{Sec:S8_ROM_Qualitative_Closure}).

\section{PROBLEM DESCRIPTION AND A REDUCED-ORDER MODEL}
\label{Sec:S2_ROM_Qualitative_GE}
Consider a thin body $\mathcal{B}$ in the ambient space $\mathbb{R}^{3}$ with thickness $d$, which is assumed to be much smaller than the other characteristic dimensions of the body. The body is assumed to be flat (i.e., no curvature) and bounded. An external source heats the body. A fluid is pushed to flow through an embedded vascular network---regulating the body’s temperature. \textbf{Figure~\ref{Fig:1_ROM_Qualitative_Problem_description}} provides a pictorial description of this setup. We now present a mathematical description of the mentioned thermal regulation in vascular systems. 

Since the body is thin, we can adequately describe the concomitant heat transfer using a reduced-order model codified in two spatial dimensions, instead of appealing to a complete three dimensional treatment. On this account, we denote the two-dimensional embedding of the mid-surface of this body by $\Omega \subset \mathbb{R}^{2}$; $\Omega$ will be referred to as the \emph{domain} in the rest of this paper. As done in the theory of partial differential equations, we assume $\Omega$ to be an open set. The boundary of $\Omega$ is denoted by $\partial \Omega := \overline{\Omega} - \Omega$, where a superposed bar represents the set closure. A spatial point is denoted by $\mathbf{x} \in \overline{\Omega}$. The outward unit normal vector to the boundary is denoted by $\widehat{\mathbf{n}}(\mathbf{x})$. The spatial divergence and gradient operators are denoted by $\mathrm{div}[\cdot]$ and $\mathrm{grad}[\cdot]$, respectively.

\begin{figure}[ht]
    \centering
    \includegraphics[scale=0.6]{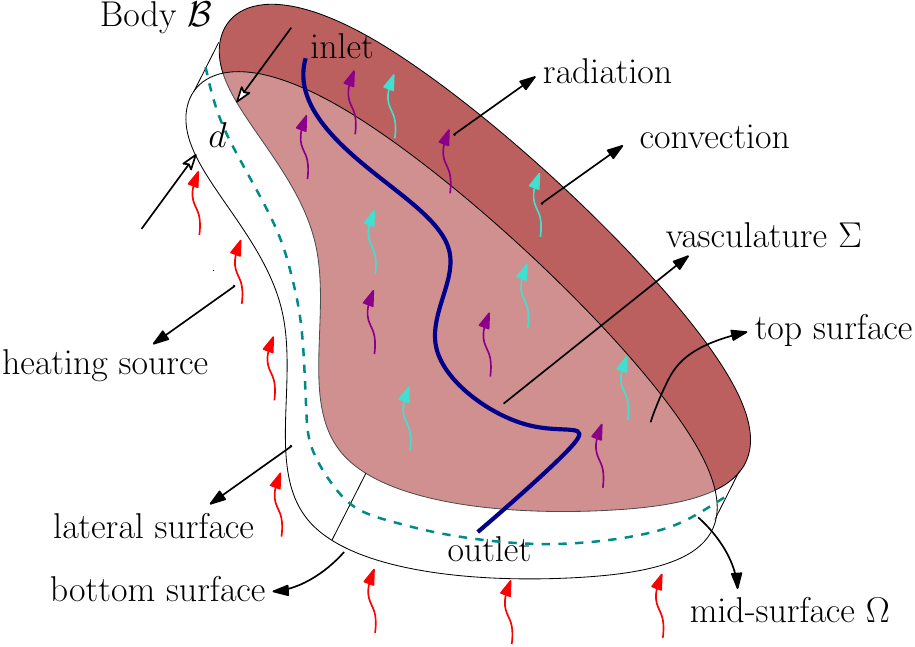}
    \caption{This figure illustrates a vascular-based thermal regulation setup. The three-dimensional body with thickness $d$ is denoted by $\mathcal{B}$, the mid-surface is by $\Omega$, and the vasculature is by $\Sigma$. 
    \label{Fig:1_ROM_Qualitative_Problem_description}}
\end{figure}

The vasculature network is modeled as a \emph{curve} within $\Omega$ and is denoted as $\Sigma$; see \textbf{Fig.~\ref{Fig:2_ROM_Qualitative_Singular_surface}}. This curve's starting point (inlet) and ending point (outlet) lie on the boundary $\partial\Omega$ with the arc-length parameter increasing from the inlet to the outlet. A unit tangent vector in the direction of increasing arc-length at a spatial point on this curve is denoted by $\widehat{\mathbf{t}}(\mathbf{x})$. Since the vasculature is fixed in space and contains a flowing fluid, it can be considered as a non-moving energetic singular surface with its own rate of energy production. The balance laws across such a surface are governed by jump conditions.

\begin{figure}
    \centering
    \includegraphics[scale=0.9]{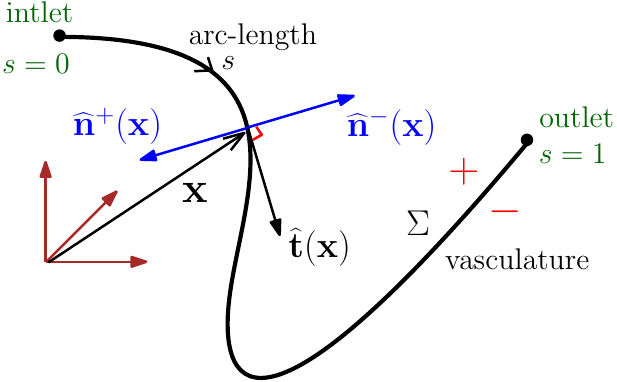}
    \caption{The vasculature $\Sigma$ divides the domain into two subdomains, denoted by $-$ and $+$. The concomitant outward unit normals for these subdomains at a spatial point $\mathbf{x}$ are denoted by $\widehat{\mathbf{n}}^{-}(\mathbf{x})$ and $\widehat{\mathbf{n}}^{+}(\mathbf{x})$. The arc-length, denoted by $s$, is used to parameterize the curve $\Sigma$ with $s = 0$ denoting the inlet while $s = 1$ the outlet. The unit tangent vector at $\mathbf{x}$ along increasing $s$ is denoted by $\widehat{\mathbf{t}}(\mathbf{x})$. \label{Fig:2_ROM_Qualitative_Singular_surface}}
\end{figure}

To this end, we define the limiting values of the fields on the either side of $\Sigma$ and the jump operator. The vasculature divides the domain $\Omega$ into two parts: denoted by $-$ and $+$, as indicated in Fig.~\ref{Fig:2_ROM_Qualitative_Singular_surface}. The limiting values of the temperature field on the either side of $\Sigma$ are defined as follows: 
\begin{align}
    \label{Eqn:ROM_Qualitative_Limiting_values}
    \vartheta^{-}(\mathbf{x}) = \lim_{\epsilon \rightarrow 0^{+}} 
    \, \vartheta\big(\mathbf{x} - \epsilon \,   \widehat{\mathbf{n}}^{-}(\mathbf{x})\big) 
    \quad \mathrm{and} \quad 
    \vartheta^{+}(\mathbf{x}) = \lim_{\epsilon \rightarrow 0^{+}} 
    \, \vartheta\big(\mathbf{x} - \epsilon \,   \widehat{\mathbf{n}}^{+}(\mathbf{x})\big)
\end{align}
where $\epsilon \rightarrow 0^{+}$ indicates the limit approaching from the positive side of the real line. Likewise, one can define the limiting values $\mathbf{q}^{\pm}(\mathbf{x})$ for the heat flux vector. The jump in fields across $\Sigma$ is denoted by the jump operator: 
\begin{subequations} 
    \label{Eqn:ROM_Qualitative_jump_operator} 
    \begin{align}
        \label{Eqn:ROM_Qualitative_jump_operator_scalar} 
        &\llbracket\vartheta(\mathbf{x}) \rrbracket 
        = \vartheta^{+}(\mathbf{x})
        \widehat{\mathbf{n}}^{+}(\mathbf{x}) 
        + \vartheta^{-}(\mathbf{x}) 
        \widehat{\mathbf{n}}^{-}(\mathbf{x}) \\
        \label{Eqn:ROM_Qualitative_jump_operator_vector} 
        &\llbracket\mathbf{q}(\mathbf{x}) \rrbracket = \mathbf{q}^{+}(\mathbf{x})
        \cdot \widehat{\mathbf{n}}^{+}(\mathbf{x}) 
        + \mathbf{q}^{-}(\mathbf{x}) 
        \cdot \widehat{\mathbf{n}}^{-}(\mathbf{x}) 
    \end{align}
\end{subequations}
Note that the jump operator defined above acts on a scalar field to produce a vector field and \emph{vice versa}.

One of the jump conditions is the continuity of the temperature across the vasculature. Stated mathematically, $\llbracket \vartheta(\mathbf{x})\rrbracket = \mathbf{0}$ on $\Sigma$.
The second jump condition is the balance of energy across $\Sigma$. To derive the corresponding mathematical expression, we consider a differential segment of the vasculature, as shown in \textbf{Fig.~\ref{Fig:ROM_Energy_jump_condition}}.  As mentioned earlier, a fluid (referred to as a coolant in the case of active cooling) flows through the vascular network with a mass flow rate $\dot{m}$ and coefficient of heat capacity $c_f$. The rate of thermal energy carried by the fluid into the vasculature at the inlet is $\chi \vartheta$, in which $\chi$ is the heat capacity rate, defined as: 
\begin{align}
    \label{Eqn:ROM_Qualitative_heat_capacity_rate}
    \chi = \dot{m} \, c_f 
\end{align}
The rate of energy carried by the fluid out of the vasculature at the outlet is $\chi (\vartheta + \frac{\mathrm{d}\vartheta}{\mathrm{d}s}\mathrm{d}s)$. The heat fluxes into the vasculature from these two regions are, respectively, indicated as $\mathbf{q}^{+}(\mathbf{x}) \cdot \widehat{\mathbf{n}}^{+}(\mathbf{x})$ and $\mathbf{q}^{-}(\mathbf{x}) \cdot \widehat{\mathbf{n}}^{-}(\mathbf{x})$. An energy balance on this differential segment of the vasculature implies that 
\begin{align}
\chi \vartheta + \left(\mathbf{q}^{+}(\mathbf{x}) \cdot \widehat{\mathbf{n}}^{+}(\mathbf{x}) + \mathbf{q}^{-}(\mathbf{x}) \cdot \widehat{\mathbf{n}}^{-}(\mathbf{x})\right) \mathrm{d}s = \chi \left(\vartheta + \frac{\mathrm{d}\vartheta}{\mathrm{d}s} \, \mathrm{d}s\right)
\end{align}
Hence, the jump condition for energy balance across $\Sigma$ can be compactly written as $\llbracket \mathbf{q}\rrbracket = \chi \frac{\mathrm{d}\vartheta}{\mathrm{d}s} = \chi \mathrm{grad}[\vartheta]\cdot \widehat{\mathbf{t}}(\mathbf{x})$.

\begin{figure}[h]
    \centering
    \includegraphics[scale=0.65,clip]{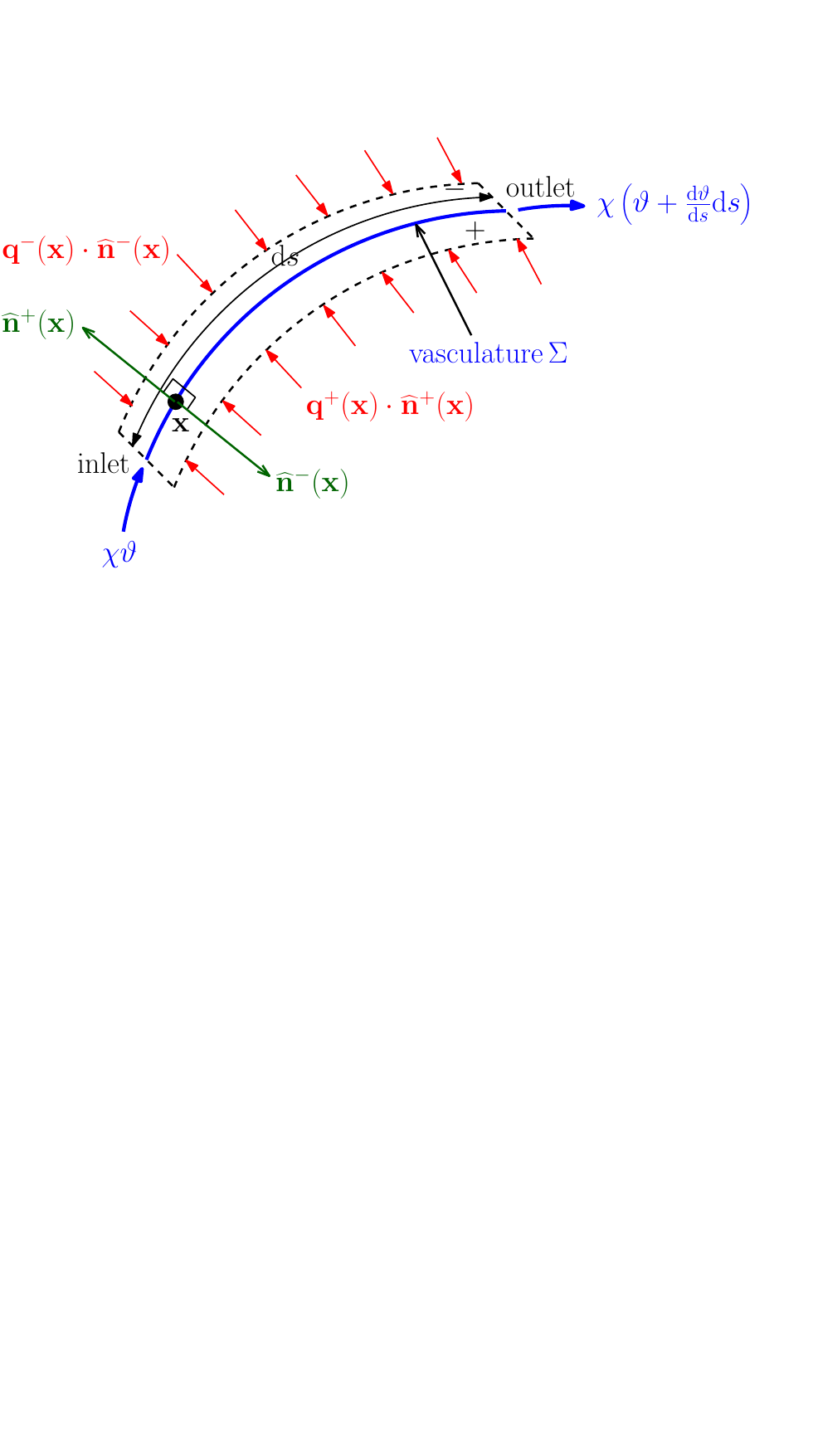}
    \caption{The figure show the energy balance for a differential segment $\mathrm{d}s$ of the vasculature $\Sigma$. The regions on the either side of the vasculature are denoted by ``$+$" and ``$-$". \label{Fig:ROM_Energy_jump_condition}}
\end{figure}


In addition to the transport of energy due to the flow of a fluid in the vasculature, we consider three other modes of heat transfer: heat conduction within the bulk solid, convection, and radiation. Newton's model for cooling describes the convective heat transfer while the Stefan-Boltzmann model the radiation. The Newton's model for cooling describes the convective heat transfer while the Stefan-Boltzmann model the radiation. We use the Fourier model for the heat conduction within the bulk: 
\begin{align}
    \label{Eqn:ROM_Qualitative_Fourier_model}
    \mathbf{q}(\mathbf{x}) = - \mathbf{K}(\mathbf{x}) \, \mathrm{grad}[\vartheta(\mathbf{x})]
\end{align}
where $\mathbf{K}(\mathbf{x})$ is the coefficient of thermal conductivity: a symmetric tensor field---possibly anisotropic and spatially inhomogeneous. The governing equations for the reduced-order model take the following form: 
\begin{subequations}
\begin{alignat}{2}
    \label{Eqn:ROM_Qualitative_BoE_general} 
    &d \, \mathrm{div}[\mathbf{q}(\mathbf{x})] = f(\mathbf{x}) 
    - h_{T} \, (\vartheta(\mathbf{x}) - \vartheta_{\mathrm{amb}}) 
    - \varepsilon \, \sigma \,  (\vartheta^{4}(\mathbf{x}) - \vartheta^{4}_{\mathrm{amb}}) 
    && \quad \mathrm{in} \; \Omega \setminus \Sigma \\
    \label{Eqn:ROM_Qualitative_q_jump_condition_general} 
    &\llbracket\mathbf{q}(\mathbf{x})\rrbracket = \chi \, \mathrm{grad}[\vartheta] \cdot \widehat{\mathbf{t}}(\mathbf{x}) 
    && \quad \mathrm{on} \; \Sigma \\
    \label{Eqn:ROM_Qualitative_temp_jump_condition_general} 
    &\llbracket\vartheta(\mathbf{x})\rrbracket = \boldsymbol{0}
    && \quad \mathrm{on} \; \Sigma \\
     \label{Eqn:ROM_Qualitative_q_BC_general} 
    &d \, \mathbf{q}(\mathbf{x}) \cdot \widehat{\mathbf{n}}(\mathbf{x}) 
    = q_{\mathrm{p}}(\mathbf{x}) 
    && \quad \mathrm{on} \; \Gamma^{q}  \\
    \label{Eqn:ROM_Qualitative_temp_BC_general} 
    &\vartheta(\mathbf{x}) = \vartheta_{\mathrm{p}}(\mathbf{x}) 
    && \quad \mathrm{on} \; \Gamma^{\vartheta} \\
    \label{Eqn:ROM_Qualitative_inlet_general} 
    &\vartheta = \vartheta_{\mathrm{inlet}}
    && \quad s = 0 \; \mathrm{on} \; \Sigma  
\end{alignat}
\end{subequations}
where $\vartheta_{\mathrm{p}}(\mathbf{x})$ is the prescribed temperature on the boundary, $q_{\mathrm{p}}(\mathbf{x})$ is the prescribed heat flux on the boundary, $h_T$ is the convective heat transfer coefficient, $\sigma \approx 5.67 \times 10^{-8} \; \mathrm{W/m^2/K^4}$ is the Stefan-Boltzmann constant, and $\varepsilon$ is the emissivity\footnote{Note the notational difference between $\epsilon$, denoting the limiting process used in Eq.~\eqref{Eqn:ROM_Qualitative_Limiting_values}, and $\varepsilon$---denoting the emissivity.}. Note that the inlet temperature in Eq.~\eqref{Eqn:ROM_Qualitative_inlet_general} need not be the same as the ambient temperature.

The thermal conductivity tensor is assumed to be uniformly bounded. That is, there exist two constants $0 < \kappa_1 \leq \kappa_2$ such that 
\begin{align}
    0 < 
    \kappa_{1} \, \boldsymbol{\xi}(\mathbf{x}) 
    \cdot \boldsymbol{\xi}(\mathbf{x}) 
    \leq \boldsymbol{\xi}(\mathbf{x}) \cdot 
    \mathbf{K}(\mathbf{x}) \boldsymbol{\xi}(\mathbf{x}) 
    \leq \kappa_{2} \, \boldsymbol{\xi}(\mathbf{x}) 
    \cdot \boldsymbol{\xi}(\mathbf{x}) 
    \quad \forall \mathbf{x} \in \Omega, 
    \forall \boldsymbol{\xi}(\mathbf{x}) \neq 
    \boldsymbol{0} 
\end{align}
Requiring a tensor field to be uniformly bounded below, commonly used in mathematical analysis and needed for technicality, is more stringent than the tensor field being positive definite \citep{gilbarg2015elliptic}. The boundary value problem \eqref{Eqn:ROM_Qualitative_BoE_general}--\eqref{Eqn:ROM_Qualitative_inlet_general} with radiation is a semi-linear---also referred to as mildly nonlinear---elliptic partial differential equation (\textsf{PDE}). Without radiation, the boundary value problem is a linear elliptic \textsf{PDE}. 

Many mathematical studies---deriving estimates and qualitative properties such as maximum and comparison principles---exist in the literature for a hierarchy of models \emph{without} active cooling (i.e., $\chi = 0$); some related authoritative references include \citep{gilbarg2015elliptic,mcowen1996partial}. However, such investigations are \emph{unavailable} for the active-cooling model given by Eqs.~\eqref{Eqn:ROM_Qualitative_BoE_general}--\eqref{Eqn:ROM_Qualitative_inlet_general}. Therefore, in subsequent sections, we conduct a systematic mathematical analysis of the considered active-cooling model and show that this model is well-posed and predictive for thermal regulation in vascular systems. \textsc{Manfred Eigen}---a Nobel Prize winner in Chemistry---echoes the necessity of such a study:
\begin{quote}
``\emph{A theory has only the alternative of being right or wrong. 
A model has a third possibility: it may be right, but irrelevant.}"
\end{quote} 

\section{MATHEMATICAL ANALYSIS}
\label{Sec:S3_ROM_Qualitative_Mathematical_analysis}

The plan for this section is to  establish results neglecting radiation. We will first state the Galerkin weak formulation corresponding to the strong form of the reduced-order model. We then utilize this formulation in establishing minimum and maximum principles. Appealing to these two principles, we prove a comparison principle, the uniqueness of solutions, and continuous dependence of solutions on the input---also known as the stability of solutions. We will provide theorem statements in plain language to convey the import of intricate theorems, thereby appealing to a broader audience. For involved mathematical proofs, we will give a small discussion---called ``Proof Idea"---outlining the employed strategy: this is in addition to providing rigorous proofs. 

To state the weak formulation precisely, we define the following function spaces:
\begin{subequations}
\begin{align}
    \mathcal{U} &:= 
    \left\{ \vartheta(\mathbf{x}) \in H^{1}(\Omega) \; \vert \; \vartheta(\mathbf{x}) = \vartheta_{\mathrm{p}}(\mathbf{x}) \; \mathrm{on} \; \Gamma^{\vartheta} \; \mathrm{and} \; \vartheta(\mathbf{x}) = \vartheta_{\mathrm{inlet}} \; \mathrm{at} \; s = 0 \; \mathrm{on} \; \Sigma \right\} \\ 
    \mathcal{W} &:= 
    \left\{ w(\mathbf{x}) \in H^{1}(\Omega) \; \vert \; 
    w(\mathbf{x}) = 0 \; \mathrm{on} \; 
    \Gamma^{\vartheta} \; \mathrm{and} \; w(\mathbf{x}) = 0 \; \mathrm{at} \; s = 0 \; \mathrm{on} \; \Sigma \right\}
\end{align}
\end{subequations}
in which $H^{1}(\Omega)$ is a standard Sobolev space containing all the square-integrable functions alongside their first weak derivatives are also square-integrable \citep{evans1998partial}. 
We use $L_2(\mathcal{K})$ to denote the set of square integrable functions defined over $\mathcal{K}$. In these remarked function spaces, the integration theory is the Lebesgue integration \citep{bartle2014elements}. An associated concept is \emph{almost everywhere}, which is commonly abbreviated as ``a.e." in mathematical analysis. Two functions are equal almost everywhere if these functions differ only on a set of measure zero; mathematically, the said  statement is written as: $f(\mathbf{x}) = g(\mathbf{x}) \;  \mathrm{a.e.}$. We denote the set of continuous functions over $\mathcal{K}$ by $C^{0}(\mathcal{K})$, and the set of once continuously differentiable functions over a set $\mathcal{K}$ by $C^{1}(\mathcal{K})$.

The \emph{Galerkin weak  formulation} corresponding to the boundary value problem \eqref{Eqn:ROM_Qualitative_BoE_general}--\eqref{Eqn:ROM_Qualitative_inlet_general} neglecting radiation reads: Find $\vartheta(\mathbf{x}) \in \mathcal{U}$ such that we have
\begin{align}
    \label{Eqn:ROM_Qualitative_Galerkin_weak_form}
    \int_{\Omega} d \, \mathrm{grad}[w] \cdot \mathbf{K}(\mathbf{x}) 
    \mathrm{grad}[\vartheta] \, \mathrm{d} \Omega 
    &+ \int_{\Omega} h_{T} \, w(\mathbf{x}) (\vartheta(\mathbf{x}) - 
    \vartheta_{\mathrm{amb}}) \, \mathrm{d} \Omega 
    \nonumber \\
    &+ \int_{\Sigma} \chi \, w(\mathbf{x}) \mathrm{grad}[\vartheta] \cdot \widehat{\mathbf{t}}(\mathbf{x}) \, \mathrm{d} \Gamma \nonumber \\
    &= \int_{\Omega} w(\mathbf{x}) f(\mathbf{x}) \, \mathrm{d} \Omega 
    - \int_{\Gamma^{q}} w(\mathbf{x}) q_{\mathrm{p}}(\mathbf{x}) \, \mathrm{d} \Gamma 
    \quad \forall w(\mathbf{x}) \in \mathcal{W} 
\end{align}
The finite element method often uses the Galerkin weak formulation to get numerical solutions. Herein, we use this formulation to establish the following significant result of this paper: a minimum principle that a solution field of the reduced-order model satisfies. 

\begin{theorem}[A minimum principle]
    \label{Thm:ROM_Qualitative_Minimum_principle}
    Let $f(\mathbf{x}) \in L_2(\Omega)$ and $q_{\mathrm{p}}(\mathbf{x}) \in L_2(\Gamma^{q})$, and 
    $\vartheta(\mathbf{x}) \in C^{1}(\Omega\setminus\Sigma) \cap C^{0}(\overline{\Omega})$ be a solution of the Galerkin weak formulation. If 
    \[
    f(\mathbf{x}) \geq 0 \quad \mathrm{a.e.} 
    \quad \mathrm{and} \quad 
    q_{\mathrm{p}}(\mathbf{x}) \leq 0
    \quad \mathrm{a.e.} 
    \]
    then the solution field $\vartheta(\mathbf{x})$ satisfies the following lower bound: 
    \begin{align}
    \label{Eqn:ROM_Qualitative_Minimum_principle_statement}
    \min\Big[\vartheta_{\mathrm{amb}},\vartheta_{\mathrm{inlet}}, \min_{\mathrm{x} \in \Gamma^{\vartheta}}\big[ \vartheta_{\mathrm{p}}(\mathbf{x})\big]\Big] \leq \vartheta(\mathbf{x}) 
    \quad \forall \mathbf{x} \in \overline{\Omega}
    \end{align}
\end{theorem}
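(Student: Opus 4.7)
The plan is to adapt the Stampacchia truncation method to the present jump problem. Let $m := \min\big[\vartheta_{\mathrm{amb}},\vartheta_{\mathrm{inlet}}, \min_{\mathbf{x} \in \Gamma^{\vartheta}} \vartheta_{\mathrm{p}}(\mathbf{x})\big]$ and set $w(\mathbf{x}) := \min\big(\vartheta(\mathbf{x}) - m,\, 0\big)$. By the very choice of $m$, one has $w = 0$ on $\Gamma^{\vartheta}$ and $w(\mathbf{x}(s=0)) = 0$ at the inlet, so $w$ is an admissible test function in $\mathcal{W}$; furthermore, standard Sobolev truncation gives $\mathrm{grad}[w] = \mathrm{grad}[\vartheta]$ almost everywhere on $\{\vartheta < m\}$ and $\mathrm{grad}[w] = \boldsymbol{0}$ elsewhere.

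Substituting this $w$ into the Galerkin weak form \eqref{Eqn:ROM_Qualitative_Galerkin_weak_form}, my strategy is to show each term on the left is non-negative and each term on the right is non-positive. The conduction term is bounded from below by $\kappa_{1}\,d \int_{\Omega} |\mathrm{grad}[w]|^{2}\,\mathrm{d}\Omega$ via uniform ellipticity. The Newton convection term is non-negative because on the set $\{\vartheta < m\}$ both factors $w = \vartheta - m$ and $\vartheta - \vartheta_{\mathrm{amb}} \leq \vartheta - m$ are non-positive (using $m \leq \vartheta_{\mathrm{amb}}$), while $w$ vanishes off that set. On the right-hand side, $f \geq 0$ together with $w \leq 0$ gives $\int_{\Omega} w\,f\,\mathrm{d}\Omega \leq 0$, and $q_{\mathrm{p}} \leq 0$ with $w \leq 0$ gives $-\int_{\Gamma^{q}} w\,q_{\mathrm{p}}\,\mathrm{d}\Gamma \leq 0$.

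The main obstacle, which distinguishes the argument from classical second-order elliptic maximum principles, is the line integral $\int_{\Sigma} \chi\,w\,\mathrm{grad}[\vartheta] \cdot \widehat{\mathbf{t}}\,\mathrm{d}\Gamma$. I plan to handle it by parameterising $\Sigma$ by arc-length and writing $W(s) := w(\mathbf{x}(s))$; the chain rule then yields $W \frac{d\vartheta}{ds} = \tfrac{1}{2}\frac{d(W^{2})}{ds}$ almost everywhere, since on $\{\vartheta \geq m\}$ the restriction $W$ vanishes while on $\{\vartheta < m\}$ one has $W = \vartheta - m$ and $\frac{dW}{ds} = \frac{d\vartheta}{ds}$. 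The fundamental theorem of calculus, together with the inlet condition $W(0) = 0$, then produces $\int_{\Sigma} \chi\,w\,\mathrm{grad}[\vartheta] \cdot \widehat{\mathbf{t}}\,\mathrm{d}\Gamma = \tfrac{\chi}{2}\big[W^{2}(1) - W^{2}(0)\big] = \tfrac{\chi}{2}\,W^{2}(1) \geq 0$. This is exactly where the inlet boundary datum contributes its essential sign.

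Assembling these estimates, the weak identity collapses to $\kappa_{1}\,d\int_{\Omega}|\mathrm{grad}[w]|^{2}\,\mathrm{d}\Omega \leq 0$, whence $\mathrm{grad}[w] = \boldsymbol{0}$ almost everywhere and $w$ is constant on every connected component of $\Omega$. Using continuity of $\vartheta$, and hence of $w$, together with $w = 0$ on $\Gamma^{\vartheta}$ and at the inlet point of $\Sigma$, every component meeting either of those sets has $w \equiv 0$; for any residual component, the vanishing of the convection integral $\int_{\Omega} h_{T}\,w(\vartheta - \vartheta_{\mathrm{amb}})\,\mathrm{d}\Omega$ forces $w \equiv 0$ as well when $h_T > 0$. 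Therefore $w \equiv 0$ on $\overline{\Omega}$, which is precisely the asserted lower bound $\vartheta(\mathbf{x}) \geq m$ for all $\mathbf{x} \in \overline{\Omega}$.
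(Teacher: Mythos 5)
Your proposal is correct and is essentially the paper's own argument: your truncation $w=\min(\vartheta-m,0)$ is exactly the paper's auxiliary function $\eta(\mathbf{x})=\min[0,\vartheta(\mathbf{x})-\Phi_{\min}]$, used as the test function in the same Galerkin weak form, with the same sign analysis of each term and the same reduction of the vasculature integral to $\tfrac{\chi}{2}W^{2}(1)\geq 0$ via the inlet condition. The only cosmetic difference is the final step: the paper reads off $\int_{\Omega}h_{T}\,\eta^{2}\,\mathrm{d}\Omega=0$ directly to get $\eta\equiv 0$, whereas you first deduce $\mathrm{grad}[w]=\boldsymbol{0}$ and argue by connected components before invoking the convection term, which is a slightly longer route to the same conclusion.
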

\begin{layperson}
Consider that the applied heat within the domain is a source---heat is supplied to the system. The heat flux prescribed on a portion or entire boundary is a sink (i.e., heat is taken out of the system), while the rest of the boundary is prescribed with a  temperature. Then, the temperature everywhere in the domain will \emph{not} be lower than the minimum amongst the ambient temperature, the temperature at the inlet, and the minimum of the prescribed temperature on the boundary.
\end{layperson}
\begin{proofidea}
The crux of the proof is to construct a convenient norm---a non-negative functional\footnote{In this paper, it will suffice to define a \emph{functional} as a function of a (input) function. For a more detailed mathematical exposure on functionals, see \citep{gelfand2000calculus}.}---and use the fact that a norm is zero if and only if the function defining it itself is zero. Our approach for constructing this norm hinges on using the Galerkin weak formulation (cf. Eq.~\eqref{Eqn:ROM_Qualitative_Galerkin_weak_form}) and defining an auxiliary function (cf. Eq.~ \eqref{Eqn:ROM_Qualitative_definition_eta}). This auxiliary function, defined in terms of temperature field, is constructed so that it is \emph{zero} whenever the statement of the minimum principle is true and \emph{negative} otherwise (cf. Eqs.~\eqref{Eqn:ROM_Qualitative_definition_Phi_min} and \eqref{Eqn:ROM_Qualitative_definition_eta}). We then rewrite the Galerkin weak form in terms of this newly introduced auxiliary function (cf.~Eq.~\eqref{Eqn:ROM_Qualitative_Minimum_principle_eta_substitution}). Using the properties of the auxiliary function, we simplify the resulting expression to get a norm (in terms of the auxiliary function) and establish that this norm is equal to zero (cf.~Eq.~\eqref{Eqn:ROM_Qualitative_Min_Principle_derivation_Step_2}). By definition, vanishing norm implies that the auxiliary function itself is zero (cf.~Eq.~\eqref{Eqn:ROM_Qualitative_Min_Principle_derivation_Step_3}). By construction, the vanishing auxiliary function means that the theorem's statement is true (cf.~Eq.~\eqref{Eqn:ROM_Qualitative_Min_Principle_derivation_Step_5}). 
\end{proofidea}
\begin{proof}
Let 
\begin{align}
    \label{Eqn:ROM_Qualitative_definition_Phi_min}
    \Phi_{\min} := 
    \min\Big[\vartheta_{\mathrm{amb}},\vartheta_{\mathrm{inlet}}, \min_{\mathrm{x} \in \Gamma^{\vartheta}}\big[ \vartheta_{\mathrm{p}}(\mathbf{x})\big]\Big] 
\end{align}
We then define
\begin{align}
    \label{Eqn:ROM_Qualitative_definition_eta}
    \eta(\mathbf{x}) := \min\big[0,\vartheta(\mathbf{x}) - \Phi_{\min}\big]
\end{align}
This newly introduced field variable satisfies the following properties: 
\begin{enumerate}[(i)]
    \item $\eta(\mathbf{x}) \leq 0 \; \forall \mathbf{x} \in \overline{\Omega}$
    \item $\eta(\mathbf{x}) = 0$ on $\forall \mathbf{x} \in \Gamma^{\vartheta}$ 
    \item $\eta(\mathbf{x}) = 0$ at the inlet (i.e., $s = 0$)
    \item $\eta(\mathbf{x}) \in C^{1}(\Omega \setminus \Sigma) \cap C^{0}(\overline{\Omega})$
    \item at a given $\mathbf{x}$, either $\eta(\mathbf{x}) = 0$ 
    or $\vartheta(\mathbf{x}) = \eta(\mathbf{x}) + \Phi_{\min}$
\end{enumerate}

Due to property (ii), $\eta(\mathbf{x})$ can serve as a weighting function. Thus, by choosing $w(\mathbf{x}) = \eta(\mathbf{x})$ in the Galerkin weak formulation \eqref{Eqn:ROM_Qualitative_Galerkin_weak_form}, we get:
{\small
\begin{align}
    \label{Eqn:ROM_Qualitative_Minimum_principle_eta_substitution}
    \int_{\Omega} d \, \mathrm{grad}[\eta] \cdot \mathbf{K}(\mathbf{x}) 
    \mathrm{grad}[\vartheta] \, \mathrm{d} \Omega 
    + \int_{\Omega} h_{T} \, \eta(\mathbf{x}) (\vartheta(\mathbf{x}) - 
    \vartheta_{\mathrm{amb}}) \, \mathrm{d} \Omega 
    &+ \int_{\Sigma} \chi \, \eta(\mathbf{x}) \mathrm{grad}[\vartheta] \cdot \widehat{\mathbf{t}}(\mathbf{x}) \, \mathrm{d} \Gamma \nonumber \\
    &= \int_{\Omega} \eta(\mathbf{x}) f(\mathbf{x}) \, \mathrm{d} \Omega 
    - \int_{\Gamma^{q}} \eta(\mathbf{x}) q_{\mathrm{p}}(\mathbf{x}) \, \mathrm{d} \Gamma 
\end{align}}
In the above equation, all the terms that contain $\vartheta(\mathbf{x})$ also have $\eta(\mathbf{x})$ (i.e., the first three terms). By virtue of property (v) --- if $\vartheta(\mathbf{x})$ is not equal to $\eta(\mathbf{x}) + \Phi_{\min}$ then $\eta(\mathbf{x}) = 0$ --- we can replace $\vartheta(\mathbf{x})$ with $\eta(\mathbf{x}) + \Phi_{\min}$ in the above equation. Accordingly, we get: 
{\small
\begin{align}
    \int_{\Omega} d \, \mathrm{grad}[\eta] \cdot \mathbf{K}(\mathbf{x}) 
    \mathrm{grad}[\eta] \, \mathrm{d} \Omega 
    &+ \int_{\Omega} h_{T} \, \eta(\mathbf{x}) (\eta(\mathbf{x}) + \Phi_{\min} - 
    \vartheta_{\mathrm{amb}}) \, \mathrm{d} \Omega \nonumber \\
    &+ \int_{\Sigma} \chi \, \eta(\mathbf{x}) \mathrm{grad}[\eta] \cdot \widehat{\mathbf{t}}(\mathbf{x}) \, \mathrm{d} \Gamma 
    = \int_{\Omega} \eta(\mathbf{x}) f(\mathbf{x}) \, \mathrm{d} \Omega 
    - \int_{\Gamma^{q}} \eta(\mathbf{x}) q_{\mathrm{p}}(\mathbf{x}) \, \mathrm{d} \Gamma 
\end{align}}
In writing the above equation, we have used the fact that $\Phi_{\min}$ is independent of $\mathbf{x}$. 

Noting that $\eta(\mathbf{x}) \leq 0$ (i.e., property (i)), and  $f(\mathbf{x}) \geq 0$ and 
$q_{\mathrm{p}}(\mathbf{x}) \leq 0$ a.e., and rearranging the terms, we get: 
{\small
\begin{align}
    \int_{\Omega} d \, \mathrm{grad}[\eta] \cdot \mathbf{K}(\mathbf{x}) 
    \mathrm{grad}[\eta] \, \mathrm{d} \Omega 
    + \int_{\Omega} h_{T} \, \eta(\mathbf{x}) (\eta(\mathbf{x}) + \Phi_{\min} - 
    \vartheta_{\mathrm{amb}}) \, \mathrm{d} \Omega 
    + \int_{\Sigma} \chi \, \eta(\mathbf{x}) \mathrm{grad}[\eta] \cdot \widehat{\mathbf{t}}(\mathbf{x}) \, \mathrm{d} \Gamma 
    \leq 0
\end{align}}
Integrating the last integral, noting that $\eta(\mathbf{x}) = 0 $ at the inlet ($s = 0$) (i.e., property (iii)), and rearranging the terms, we get: 
{\small
\begin{align}
    \int_{\Omega} d \, \mathrm{grad}[\eta] \cdot \mathbf{K}(\mathbf{x}) 
    \mathrm{grad}[\eta] \, \mathrm{d} \Omega 
    + \int_{\Omega} h_{T} \, \eta^{2}(\mathbf{x}) \, \mathrm{d} \Omega 
    + \frac{\chi}{2} \, \eta^{2}(\mathbf{x}) \Big|_{s = 1}
    \leq \int_{\Omega} h_{T} \, \eta(\mathbf{x}) 
    \big(\vartheta_{\mathrm{amb}} - \Phi_{\min}\big) \, \mathrm{d} \Omega 
    \leq 0 
\end{align}}
In establishing the above inequality, we have used property (i), $h_T > 0$, and $\vartheta_{\mathrm{amb}} \geq \Phi_{\min}$ (cf. Eq.~\eqref{Eqn:ROM_Qualitative_definition_Phi_min}). By invoking that $d > 0$, $h_T > 0$, $\chi \geq 0$, and $\mathbf{K}(\mathbf{x})$ is positive definite, we conclude:
{\small
\begin{align}
    \label{Eqn:ROM_Qualitative_Min_Principle_derivation_Step_2}
    \int_{\Omega} d \, \mathrm{grad}[\eta] \cdot \mathbf{K}(\mathbf{x}) 
    \mathrm{grad}[\eta] \, \mathrm{d} \Omega 
    + \int_{\Omega} h_{T} \, \eta^{2}(\mathbf{x}) \, \mathrm{d} \Omega 
    + \frac{\chi}{2} \, \eta^{2}(\mathbf{x}) \Big|_{s = 1}
    = 0 
\end{align}}
The left side of the above equation is a norm, defined in terms of the function $\eta(\mathbf{x})$. Since a norm can be zero if and only if the function itself is zero, we have: 
\begin{align}
    \label{Eqn:ROM_Qualitative_Min_Principle_derivation_Step_3}
    \eta(\mathbf{x}) = 0 \quad \forall \mathbf{x} \in \overline{\Omega} 
\end{align}

Appealing to the definition of $\eta(\mathbf{x})$ (i.e., Eq.~\eqref{Eqn:ROM_Qualitative_definition_eta}) in conjunction with the above equation, we conclude that 
\begin{align}
    \label{Eqn:ROM_Qualitative_Min_Principle_derivation_Step_4}
    \Phi_{\min} 
    \leq \vartheta(\mathbf{x}) 
    \quad \forall \mathbf{x} \in \overline{\Omega}
\end{align}
The definition of $\Phi_{\min}$ (i.e., Eq.~\eqref{Eqn:ROM_Qualitative_definition_Phi_min}) renders the desired result:
\begin{align}
    \label{Eqn:ROM_Qualitative_Min_Principle_derivation_Step_5}
    \min\Big[\vartheta_{\mathrm{amb}},\vartheta_{\mathrm{inlet}}, \min_{\mathrm{x} \in \Gamma^{\vartheta}}\big[ \vartheta_{\mathrm{p}}(\mathbf{x})\big]\Big] 
    \leq \vartheta(\mathbf{x}) 
    \quad \forall \mathbf{x} \in \overline{\Omega}
\end{align}
\end{proof} 

\begin{corollary}[Non-negative solutions]
    \label{Cor:ROM_Qualitative_Nonnegative_solutions}
    Let $\vartheta(\mathbf{x}) \in C^{1}(\Omega\setminus\Sigma) \cap C^{0}(\overline{\Omega})$ satisfies weakly (i.e., in the sense of Galerkin formulation) the following differential inequalities: 
    \begin{subequations}
    \begin{alignat}{2}
    \label{Eqn:ROM_Qualitative_BoE_general_NN}
    &d \, \mathrm{div}[\mathbf{q}(\mathbf{x})] + h_{T}(\mathbf{x}) \, \vartheta(\mathbf{x}) \geq 0  
    && \quad \mathrm{in} \; \Omega \setminus \Sigma \\
    &\llbracket\mathbf{q}(\mathbf{x})\rrbracket = \chi \, \mathrm{grad}[\vartheta] \cdot \widehat{\mathbf{t}}(\mathbf{x}) 
    && \quad \mathrm{on} \; \Sigma \\
    &\llbracket\vartheta(\mathbf{x})\rrbracket = \boldsymbol{0}
    && \quad \mathrm{on} \; \Sigma \\
    &d \, \mathbf{q}(\mathbf{x}) \cdot \widehat{\mathbf{n}}(\mathbf{x}) 
    \leq 0 
    && \quad \mathrm{on} \; \Gamma^{q}  \\
    &\vartheta(\mathbf{x}) \geq 0 
    && \quad \mathrm{on} \; \Gamma^{\vartheta} \\
    \label{Eqn:ROM_Qualitative_inlet_general_NN} 
    &\vartheta \geq 0 
    && \quad s = 0 \; \mathrm{on} \; \Sigma  
\end{alignat}
\end{subequations}
under the Fourier model (i.e., $\mathbf{q}(\mathbf{x}) = -\mathbf{K}(\mathbf{x}) \, \mathrm{grad}[\vartheta]$). If $h_T(\mathbf{x}) \geq 0$, the solution field is non-negative: 
    \begin{align}
    \label{Eqn:ROM_Qualitative_NN_statement}
    0 \leq \vartheta(\mathbf{x}) 
    \quad \forall \mathbf{x} \in \overline{\Omega}
    \end{align}
\end{corollary}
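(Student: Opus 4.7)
The plan is to realize this corollary as a direct consequence of Theorem \ref{Thm:ROM_Qualitative_Minimum_principle} by reading the hypothesized differential inequalities as an instance of the reduced-order boundary value problem \eqref{Eqn:ROM_Qualitative_BoE_general}--\eqref{Eqn:ROM_Qualitative_inlet_general} (without radiation) under a specific choice of data. First, I would set $\vartheta_{\mathrm{amb}} := 0$ and introduce placeholder data $\tilde f(\mathbf{x}) := d\,\mathrm{div}[\mathbf{q}(\mathbf{x})] + h_{T}(\mathbf{x})\,\vartheta(\mathbf{x})$ on $\Omega \setminus \Sigma$, $\tilde q_{\mathrm{p}}(\mathbf{x}) := d\,\mathbf{q}(\mathbf{x}) \cdot \widehat{\mathbf{n}}(\mathbf{x})$ on $\Gamma^{q}$, together with the traces $\tilde\vartheta_{\mathrm{p}}(\mathbf{x}) := \vartheta(\mathbf{x})|_{\Gamma^{\vartheta}}$ and $\tilde\vartheta_{\mathrm{inlet}} := \vartheta|_{s=0}$. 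The hypotheses of the corollary translate directly into $\tilde f \geq 0$ a.e., $\tilde q_{\mathrm{p}} \leq 0$ a.e., $\tilde\vartheta_{\mathrm{p}} \geq 0$, and $\tilde\vartheta_{\mathrm{inlet}} \geq 0$, so with this relabeling the given $\vartheta$ satisfies the Galerkin weak formulation of the reduced-order model.

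Invoking Theorem \ref{Thm:ROM_Qualitative_Minimum_principle} would then yield
\begin{align*}
    \vartheta(\mathbf{x}) \;\geq\; \min\!\Big[\,0,\;\tilde\vartheta_{\mathrm{inlet}},\;\min_{\mathbf{x}\in\Gamma^{\vartheta}} \tilde\vartheta_{\mathrm{p}}(\mathbf{x})\,\Big] \qquad \forall\,\mathbf{x} \in \overline{\Omega}.
\end{align*}
Each entry inside the outer minimum is non-negative by hypothesis, yet $0$ itself is one of the entries, so the right-hand side equals $0$, delivering the desired bound $\vartheta(\mathbf{x}) \geq 0$ on $\overline{\Omega}$.

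The main obstacle will be reconciling a mild mismatch: Theorem \ref{Thm:ROM_Qualitative_Minimum_principle} is stated with a strictly positive constant $h_{T}$, whereas this corollary permits a spatially varying $h_{T}(\mathbf{x}) \geq 0$ that may vanish on parts of $\Omega$. To address this, I would alternatively redo the argument \emph{ab initio} in the template of Theorem \ref{Thm:ROM_Qualitative_Minimum_principle}, taking $\Phi_{\min} = 0$ and the auxiliary function $\eta(\mathbf{x}) := \min[0, \vartheta(\mathbf{x})]$, and testing the hypothesized weak differential inequality with $w = -\eta \geq 0$ (a valid test function since $\eta$ vanishes on $\Gamma^{\vartheta}$ and at $s=0$). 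After using the sign of the prescribed boundary flux on $\Gamma^{q}$ to absorb that term, replacing $\vartheta$ by $\eta$ in the integrands (justified by property (v) of $\eta$), and integrating the arc-length derivative along $\Sigma$, one reaches the estimate
\begin{align*}
    \int_\Omega d\,\mathrm{grad}[\eta] \cdot \mathbf{K}(\mathbf{x})\,\mathrm{grad}[\eta]\,\mathrm{d}\Omega + \int_\Omega h_{T}(\mathbf{x})\,\eta^{2}(\mathbf{x})\,\mathrm{d}\Omega + \frac{\chi}{2}\,\eta^{2}(\mathbf{x})\big|_{s=1} \;\leq\; 0.
\end{align*}
Each term on the left is non-negative, so all three must vanish. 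Uniform ellipticity of $\mathbf{K}(\mathbf{x})$ then forces $\mathrm{grad}[\eta] = \mathbf{0}$ a.e., which, combined with the homogeneous trace $\eta = 0$ on $\Gamma^{\vartheta}$ (and at the inlet), gives $\eta \equiv 0$ via a Poincar\'e-type argument. The definition of $\eta$ immediately yields $\vartheta(\mathbf{x}) \geq 0$ on $\overline{\Omega}$.
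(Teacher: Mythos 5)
Your proposal is correct and takes essentially the same route as the paper: the paper's proof likewise reads the hypotheses as the reduced-order problem with $\vartheta_{\mathrm{amb}}=0$, $f\geq 0$, $q_{\mathrm{p}}\leq 0$, $\vartheta_{\mathrm{p}}\geq 0$, $\vartheta_{\mathrm{inlet}}\geq 0$ and invokes Theorem~\ref{Thm:ROM_Qualitative_Minimum_principle} directly. Your fallback argument for spatially varying $h_{T}(\mathbf{x})\geq 0$ (which may vanish, whereas the theorem's proof uses $h_{T}>0$) addresses a detail the paper silently glosses over, and the Poincar\'e-type closing step you sketch is the right way to patch it.
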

\begin{proof}
Compared to the original boundary value problem \eqref{Eqn:ROM_Qualitative_BoE_general}--\eqref{Eqn:ROM_Qualitative_inlet_general}, we have herein $\vartheta_{\mathrm{amb}} = 0$, $f(\mathbf{x}) \geq 0$, $\mathrm{q}_{\mathrm{p}}(\mathbf{x}) \leq 0$, $\vartheta_{\mathrm{p}}(\mathbf{x}) \geq 0$, and $\vartheta_{\mathrm{inlet}} \geq 0$. Thus, the minimum principle (i.e., Theorem \ref{Thm:ROM_Qualitative_Minimum_principle}) immediately implies the non-negativity of the solution field: 
\begin{align}
    0 \leq 
    \min\Big[\vartheta_{\mathrm{amb}},\vartheta_{\mathrm{inlet}}, \min_{\mathrm{x} \in \Gamma^{\vartheta}}\big[ \vartheta_{\mathrm{p}}(\mathbf{x})\big]\Big] \leq \vartheta(\mathbf{x}) 
    \quad \forall \mathbf{x} \in \overline{\Omega}
\end{align}
\end{proof}

\begin{theorem}[A maximum principle] 
    \label{Thm:ROM_Qualitative_Maximum_principle}
    Let $0 \geq f(\mathbf{x}) \in L_2(\Omega)$ and $0 \leq q_{\mathrm{p}}(\mathbf{x}) \in L_2(\Gamma^{q})$ almost everywhere. Then the solution $\vartheta(\mathbf{x}) \in C^{1}(\Omega\setminus\Sigma) \cap C^{0}(\overline{\Omega})$ of the Galerkin weak formulation satisfies the following upper bound: 
    \begin{align}
    \label{Eqn:ROM_Qualitative_Maximum_principle_statement}
    \vartheta(\mathbf{x})  \leq 
    \max\Big[\vartheta_{\mathrm{amb}},\vartheta_{\mathrm{inlet}}, \max_{\mathrm{x} \in \Gamma^{\vartheta}}\big[ \vartheta_{\mathrm{p}}(\mathbf{x})\big]\Big] 
    \quad \forall \mathbf{x} \in \overline{\Omega}
    \end{align}
\end{theorem}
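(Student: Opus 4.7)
The plan is to mirror the strategy used for the minimum principle, replacing the truncation from below with a truncation from above. Define
\[
\Phi_{\max} := \max\Big[\vartheta_{\mathrm{amb}},\vartheta_{\mathrm{inlet}}, \max_{\mathbf{x}\in\Gamma^{\vartheta}} \vartheta_{\mathrm{p}}(\mathbf{x})\Big]
\]
and introduce the auxiliary field
\[
\eta(\mathbf{x}) := \max\big[0,\vartheta(\mathbf{x}) - \Phi_{\max}\big].
\]
This $\eta$ is non-negative, vanishes on $\Gamma^{\vartheta}$ and at the inlet (since $\vartheta \leq \Phi_{\max}$ at those locations by construction of $\Phi_{\max}$), and inherits $C^{1}(\Omega\setminus\Sigma) \cap C^{0}(\overline{\Omega})$ regularity from $\vartheta$. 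Crucially, at each $\mathbf{x}$ either $\eta(\mathbf{x}) = 0$ or $\vartheta(\mathbf{x}) = \eta(\mathbf{x}) + \Phi_{\max}$, and $\mathrm{grad}[\eta]$ coincides with $\mathrm{grad}[\vartheta]$ wherever $\eta > 0$. Hence $\eta \in \mathcal{W}$ is an admissible weighting function.

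Next, I would substitute $w = \eta$ into the Galerkin weak formulation \eqref{Eqn:ROM_Qualitative_Galerkin_weak_form}. Every term on the left already carries a factor of $\eta$ or $\mathrm{grad}[\eta]$, so we may replace $\vartheta$ by $\eta + \Phi_{\max}$ in those integrands; the constant $\Phi_{\max}$ contributes nothing to the gradient term, and the vasculature integral is handled by integration along $\Sigma$, exploiting $\eta(\mathbf{x})|_{s=0} = 0$, to produce $\tfrac{\chi}{2}\eta^{2}|_{s=1}$. Moving the term $\int_{\Omega} h_{T}\,\eta(\Phi_{\max}-\vartheta_{\mathrm{amb}})\,\mathrm{d}\Omega$ to the right-hand side yields
\begin{align*}
&\int_{\Omega} d\,\mathrm{grad}[\eta] \cdot \mathbf{K}(\mathbf{x})\,\mathrm{grad}[\eta]\,\mathrm{d}\Omega + \int_{\Omega} h_{T}\,\eta^{2}(\mathbf{x})\,\mathrm{d}\Omega + \tfrac{\chi}{2}\,\eta^{2}(\mathbf{x})\big|_{s=1} \\
&\qquad = \int_{\Omega} \eta(\mathbf{x})\,f(\mathbf{x})\,\mathrm{d}\Omega - \int_{\Gamma^{q}} \eta(\mathbf{x})\,q_{\mathrm{p}}(\mathbf{x})\,\mathrm{d}\Gamma - \int_{\Omega} h_{T}\,\eta(\mathbf{x})(\Phi_{\max}-\vartheta_{\mathrm{amb}})\,\mathrm{d}\Omega.
\end{align*}

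Invoking the hypotheses $f \leq 0$ a.e., $q_{\mathrm{p}} \geq 0$ a.e., and $\Phi_{\max} \geq \vartheta_{\mathrm{amb}}$ by definition, together with $\eta \geq 0$, makes every term on the right-hand side non-positive. The left-hand side, by uniform positive-definiteness of $\mathbf{K}$ together with $d>0$, $h_{T}>0$, $\chi \geq 0$, is manifestly non-negative and constitutes a norm of $\eta$. Both sides must therefore vanish, forcing $\eta(\mathbf{x}) \equiv 0$ on $\overline{\Omega}$, which is precisely $\vartheta(\mathbf{x}) \leq \Phi_{\max}$ and delivers the claimed bound.

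The main obstacle is of a technical rather than conceptual nature: one must justify that $\eta = \max[0,\vartheta-\Phi_{\max}]$ lies in $H^{1}(\Omega)$ with the almost-everywhere gradient $\mathrm{grad}[\eta] = \mathbf{1}_{\{\vartheta > \Phi_{\max}\}}\mathrm{grad}[\vartheta]$, and that the substitution $\vartheta \mapsto \eta + \Phi_{\max}$ inside each integrand is legitimate, being exact on $\{\eta > 0\}$ and immaterial on $\{\eta = 0\}$ where the integrand vanishes identically. Once these Stampacchia-style truncation facts are granted, the proof is a sign-reversed replica of Theorem~\ref{Thm:ROM_Qualitative_Minimum_principle} and requires no new ideas.
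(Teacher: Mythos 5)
Your argument is correct, but it is not the route the paper takes. The paper proves the maximum principle by a two-line reduction: it sets $\varphi(\mathbf{x}) = -\vartheta(\mathbf{x})$, observes that $\varphi$ solves the boundary value problem with all data negated (so that $\{-f\} \geq 0$ and $\{-q_{\mathrm{p}}\} \leq 0$), applies the already-established minimum principle (Theorem~\ref{Thm:ROM_Qualitative_Minimum_principle}) to $\varphi$, and flips the resulting inequality. You instead re-run the entire truncation machinery from above, with $\eta = \max[0,\vartheta - \Phi_{\max}]$ in place of $\min[0,\vartheta-\Phi_{\min}]$; every step you list (the five properties of $\eta$, the substitution $\vartheta \mapsto \eta + \Phi_{\max}$, the sign analysis of $\int_\Omega \eta f$, $-\int_{\Gamma^q}\eta q_{\mathrm{p}}$, and $\int_\Omega h_T\,\eta(\Phi_{\max}-\vartheta_{\mathrm{amb}})$, and the integration of the vasculature term to $\tfrac{\chi}{2}\eta^2|_{s=1}$ using $\eta|_{s=0}=0$) checks out, so the proof is sound. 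What each approach buys: the paper's reduction is far more economical and reuses the hard work already done, at the mild cost of asserting (in strong form, somewhat loosely, since the theorem is about weak solutions) that $\varphi$ satisfies the transformed problem; your direct argument is self-contained, works entirely at the level of the Galerkin weak form \eqref{Eqn:ROM_Qualitative_Galerkin_weak_form}, and makes the sign symmetry of the two principles explicit, but duplicates the derivation and inherits, for a second time, the same regularity technicality (that the truncation of a $C^1(\Omega\setminus\Sigma)\cap C^0(\overline{\Omega})$ field remains an admissible test function in $\mathcal{W}$) which you rightly flag and which the paper also glosses over in the minimum-principle proof.
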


\begin{proof}
Let us define a new variable: 
\begin{align}
    \varphi(\mathbf{x}) = -\vartheta(\mathbf{x}) 
\end{align}
This new variable satisfies the following boundary value problem: 
\begin{subequations}
\begin{alignat}{2}
    \label{Eqn:ROM_Qualitative_BoE_MaxP} 
    -&d \, \mathrm{div}[\mathbf{K}(\mathbf{x})\mathrm{grad}[\varphi]] = \big\{-f(\mathbf{x})\big\} - h_{T} \,  \Big(\varphi(\mathbf{x}) - 
    \big\{-\vartheta_{\mathrm{amb}}\big\} \Big)
    && \quad \mathrm{in} \; \Omega \\
    \label{Eqn:ROM_Qualitative_q_jump_condition_MaxP} 
    -&d \, \llbracket\mathbf{K}(\mathbf{x}) \mathrm{grad}[\varphi]\rrbracket = \chi \,  \mathrm{grad}[\varphi] \cdot \widehat{\mathbf{t}}(\mathbf{x}) 
    && \quad \mathrm{on} \; \Sigma \\
    \label{Eqn:ROM_Qualitative_temp_jump_condition_MaxP} 
    &\llbracket \varphi(\mathbf{x})\rrbracket = 0  
    && \quad \mathrm{on} \; \Sigma \\
    \label{Eqn:ROM_Qualitative_q_BC_MaxP} 
    -&d \, \widehat{\mathbf{n}}(\mathbf{x}) \cdot \mathbf{K}(\mathbf{x})\mathrm{grad}[\varphi] 
    = \big\{-q_{\mathrm{p}}(\mathbf{x}) \big\} 
    && \quad \mathrm{on} \; \Gamma^{q}  \\
    \label{Eqn:ROM_Qualitative_temp_BC_MaxP} 
    &\varphi(\mathbf{x}) = \big\{-\vartheta_{\mathrm{p}}(\mathbf{x})\big\} 
    && \quad \mathrm{on} \; \Gamma^{\vartheta} \\
    \label{Eqn:ROM_Qualitative_inlet_BC_MaxP} 
    &\varphi = \big\{-\vartheta_{\mathrm{inlet}} \big\} 
    && \quad \mathrm{at} \, s = 0 \, \mathrm{on} \, \Sigma  
\end{alignat}
\end{subequations}

The heat sources in the above boundary value problem satisfy $\{-f(\mathbf{x}) \} \geq 0$ and $\{-q_{\mathrm{p}}(\mathbf{x}) \} \leq 0$. So the above boundary value problem, defined in terms of $\varphi(\mathbf{x})$, satisfies the requirements of the minimum principle. Noting the terms in the curly brackets and using the minimum principle on $\varphi(\mathbf{x})$, we write: 
\begin{align}
    \min\Big[-\vartheta_{\mathrm{amb}},-\vartheta_{\mathrm{inlet}}, \min_{\mathrm{x} \in \Gamma^{\vartheta}}\big[- \vartheta_{\mathrm{p}}(\mathbf{x})\big]\Big] 
    \leq \varphi(\mathbf{x})
    \quad \forall \mathbf{x} \in \overline{\Omega}
\end{align}
which is equivalent to: 
\begin{align}
    -\max\Big[\vartheta_{\mathrm{amb}},\vartheta_{\mathrm{inlet}}, \max_{\mathrm{x} \in \Gamma^{\vartheta}}\big[ \vartheta_{\mathrm{p}}(\mathbf{x})\big]\Big] 
    \leq \varphi(\mathbf{x}) \equiv 
    - \vartheta(\mathbf{x})
    \quad \forall \mathbf{x} \in \overline{\Omega}
\end{align}
By reversing the sign on both sides, which reverses the inequality, we get the desired result.
\end{proof} 

The above minimum and maximum principles render easily three other properties: a comparison principle, uniqueness of solutions, and continuous dependence of solutions on the prescribed inputs. We now state and prove these three new properties. 

\begin{theorem}[A comparison principle]
    \label{Thm:ROM_Qualitative_CP}
    For a given vasculature and ambient temperature, let 
    $\vartheta^{(1)}(\mathbf{x})$ and $\vartheta^{(2)}(\mathbf{x})$ are the temperature fields under the prescribed inputs: 
    $\left(\vartheta^{(1)}_{\mathrm{inlet}},\vartheta^{(1)}_{\mathrm{p}}(\mathbf{x}),f^{(1)}(\mathbf{x}),q_{\mathrm{p}}^{(1)}(\mathbf{x})\right)$ and  $\left(\vartheta^{(2)}_{\mathrm{inlet}},\vartheta^{(2)}_{\mathrm{p}}(\mathbf{x}),f^{(2)}(\mathbf{x}),q_{\mathrm{p}}^{(2)}(\mathbf{x})\right)$, respectively. 
    If $\vartheta^{(1)}_{\mathrm{inlet}} \leq \vartheta^{(2)}_{\mathrm{inlet}}$, $\vartheta^{(1)}_{\mathrm{p}}(\mathbf{x}) \leq \vartheta^{(2)}_{\mathrm{p}}(\mathbf{x})$, $f^{(1)}(\mathbf{x}) \leq f^{(2)}(\mathbf{x})$, and  $q_{\mathrm{p}}^{(1)}(\mathbf{x}) \geq q_{\mathrm{p}}^{(2)}(\mathbf{x})$ then 
    \begin{align}
        \vartheta^{(1)}(\mathbf{x}) \leq \vartheta^{(2)}(\mathbf{x}) \quad 
        \forall \mathbf{x} \in \overline{\Omega}
    \end{align}
\end{theorem}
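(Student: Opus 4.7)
The plan is to reduce the comparison principle to the already-established minimum principle (Theorem~\ref{Thm:ROM_Qualitative_Minimum_principle}) by considering the difference of the two solutions. Specifically, I would define
\begin{align*}
    w(\mathbf{x}) := \vartheta^{(2)}(\mathbf{x}) - \vartheta^{(1)}(\mathbf{x}),
\end{align*}
and exploit the linearity of the reduced-order model (recall that radiation is neglected in this section, so the weak formulation \eqref{Eqn:ROM_Qualitative_Galerkin_weak_form} is linear in $\vartheta$).

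First, I would subtract the Galerkin weak formulation satisfied by $\vartheta^{(1)}(\mathbf{x})$ from that satisfied by $\vartheta^{(2)}(\mathbf{x})$, for every $w(\mathbf{x}) \in \mathcal{W}$. Because the conductivity $\mathbf{K}(\mathbf{x})$, the convective coefficient $h_T$, the ambient temperature $\vartheta_{\mathrm{amb}}$, the mass flow rate, and the vasculature $\Sigma$ are common to both boundary value problems, all the $\vartheta_{\mathrm{amb}}$ contributions cancel and the difference inherits the structure of the original weak form. Concretely, the field $w(\mathbf{x})$ satisfies the Galerkin weak formulation associated with the effective data
\begin{align*}
    \widetilde{f}(\mathbf{x}) &= f^{(2)}(\mathbf{x}) - f^{(1)}(\mathbf{x}) \geq 0, \quad
    \widetilde{q}_{\mathrm{p}}(\mathbf{x}) = q_{\mathrm{p}}^{(2)}(\mathbf{x}) - q_{\mathrm{p}}^{(1)}(\mathbf{x}) \leq 0, \\
    \widetilde{\vartheta}_{\mathrm{p}}(\mathbf{x}) &= \vartheta^{(2)}_{\mathrm{p}}(\mathbf{x}) - \vartheta^{(1)}_{\mathrm{p}}(\mathbf{x}) \geq 0, \quad
    \widetilde{\vartheta}_{\mathrm{inlet}} = \vartheta^{(2)}_{\mathrm{inlet}} - \vartheta^{(1)}_{\mathrm{inlet}} \geq 0,
\end{align*}
together with an effective ambient temperature $\widetilde{\vartheta}_{\mathrm{amb}} = 0$.

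Second, the sign assumptions on $\widetilde{f}$ and $\widetilde{q}_{\mathrm{p}}$ place $w(\mathbf{x})$ squarely in the hypotheses of Theorem~\ref{Thm:ROM_Qualitative_Minimum_principle}. Applying that theorem yields
\begin{align*}
    \min\Big[0,\widetilde{\vartheta}_{\mathrm{inlet}}, \min_{\mathbf{x}\in\Gamma^{\vartheta}}\widetilde{\vartheta}_{\mathrm{p}}(\mathbf{x})\Big] \leq w(\mathbf{x}) \quad \forall \mathbf{x} \in \overline{\Omega},
\end{align*}
and since each argument of the outer minimum is non-negative, the right-hand side is bounded below by $0$. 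Therefore $\vartheta^{(2)}(\mathbf{x}) - \vartheta^{(1)}(\mathbf{x}) \geq 0$ on $\overline{\Omega}$, which is exactly the claimed inequality.

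The main obstacle I anticipate is purely a bookkeeping one: verifying that the difference $w(\mathbf{x})$ genuinely satisfies the weak form with the claimed effective data, rather than something with extra cross-terms. The conduction term, the jump term $\chi\,\mathrm{grad}[\vartheta]\cdot\widehat{\mathbf{t}}$ along $\Sigma$, and the interior convection term $h_T(\vartheta - \vartheta_{\mathrm{amb}})$ are all linear in $\vartheta$, and the ambient constant is the same in both problems, so the cancellation is clean; nevertheless, care must be taken that $w$ lies in the appropriate trial space, i.e., $w(\mathbf{x}) = \widetilde{\vartheta}_{\mathrm{p}}(\mathbf{x})$ on $\Gamma^{\vartheta}$ and $w = \widetilde{\vartheta}_{\mathrm{inlet}}$ at $s=0$, so that Theorem~\ref{Thm:ROM_Qualitative_Minimum_principle} is directly applicable.
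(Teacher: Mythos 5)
Your proposal is correct and follows essentially the same route as the paper: form the difference of the two solutions, observe that the common ambient temperature cancels so the difference solves the problem with zero effective ambient and sign-definite data, and invoke the minimum principle (the paper routes this through Corollary~\ref{Cor:ROM_Qualitative_Nonnegative_solutions}, which is just the minimum principle specialized to $\vartheta_{\mathrm{amb}}=0$). Your write-up is, if anything, cleaner on the sign convention for the difference than the paper's own.
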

\begin{proof} Let us consider the difference of the two solutions: 
\begin{align}
    \widetilde{\vartheta}(\mathbf{x}) = \vartheta^{(1)}(\mathbf{x}) - \vartheta^{(2)}(\mathbf{x})
\end{align}
Noting the ambient temperature is the same for both the cases (i.e., $\widetilde{\vartheta}_{\mathrm{amb}} = \widetilde{\vartheta}^{(1)}_{\mathrm{amb}} - \widetilde{\vartheta}^{(2)}_{\mathrm{amb}}= 0$), 
the difference $\widetilde{\vartheta}(\mathbf{x})$ satisfies the differential inequalities \eqref{Eqn:ROM_Qualitative_BoE_general_NN}--\eqref{Eqn:ROM_Qualitative_inlet_general} of Corollary \ref{Cor:ROM_Qualitative_Nonnegative_solutions}. 
The same corollary implies the non-negativity of $\widetilde{\vartheta}(\mathbf{x})$. Hence, $\vartheta^{(2)}(\mathbf{x}) \leq \vartheta^{(1)}(\mathbf{x})$ for all $\mathbf{x} \in \overline{\Omega}$. 
\end{proof}

\begin{theorem}[Uniqueness of solutions] 
\label{Thm:ROM_Qualitative_Uniqueness}
The solutions to the boundary value problem under the reduced-order model, given by Eqs.~\eqref{Eqn:ROM_Qualitative_BoE_general}--\eqref{Eqn:ROM_Qualitative_inlet_general}, are unique. 
\end{theorem}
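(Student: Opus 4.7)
The plan is to reduce uniqueness to the comparison principle (Theorem \ref{Thm:ROM_Qualitative_CP}), which is already in hand. Suppose $\vartheta^{(1)}(\mathbf{x})$ and $\vartheta^{(2)}(\mathbf{x})$ are two solutions of the boundary value problem \eqref{Eqn:ROM_Qualitative_BoE_general}--\eqref{Eqn:ROM_Qualitative_inlet_general} corresponding to the same prescribed data $(\vartheta_{\mathrm{amb}}, \vartheta_{\mathrm{inlet}}, \vartheta_{\mathrm{p}}, f, q_{\mathrm{p}})$. Then the hypotheses of the comparison principle hold with equality in \emph{both} directions: every prescribed input for solution (1) equals the corresponding input for solution (2), so the non-strict comparison inequalities are satisfied whether we label solution (1) as the ``lower'' or the ``upper'' data set. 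Applying the comparison principle once yields $\vartheta^{(1)}(\mathbf{x}) \leq \vartheta^{(2)}(\mathbf{x})$ on $\overline{\Omega}$; swapping the labels and applying it again yields the reverse inequality. Combined, these force $\vartheta^{(1)}(\mathbf{x}) = \vartheta^{(2)}(\mathbf{x})$ for every $\mathbf{x} \in \overline{\Omega}$.

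An equivalent and slightly more direct route is to introduce the difference $\widetilde{\vartheta}(\mathbf{x}) := \vartheta^{(1)}(\mathbf{x}) - \vartheta^{(2)}(\mathbf{x})$. Linearity of the non-radiative governing equations, together with the fact that $\vartheta_{\mathrm{amb}}$ is identical for the two problems (so the ambient contribution in the convective term cancels), shows that $\widetilde{\vartheta}(\mathbf{x})$ satisfies the fully homogeneous boundary value problem with $f \equiv 0$, $q_{\mathrm{p}} \equiv 0$, $\vartheta_{\mathrm{p}} \equiv 0$, $\vartheta_{\mathrm{inlet}} = 0$, and effective ambient temperature zero. Theorem \ref{Thm:ROM_Qualitative_Minimum_principle} then gives $\widetilde{\vartheta}(\mathbf{x}) \geq 0$ and Theorem \ref{Thm:ROM_Qualitative_Maximum_principle} gives $\widetilde{\vartheta}(\mathbf{x}) \leq 0$ on $\overline{\Omega}$, so $\widetilde{\vartheta}(\mathbf{x}) \equiv 0$.

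There is no substantive obstacle remaining, since the hard work was done in proving the minimum principle and its consequences. The only item meriting a brief verification is that the comparison principle admits equality in the prescribed data (i.e., its hypotheses use non-strict inequalities), which is evident from the statement of Theorem \ref{Thm:ROM_Qualitative_CP}. I would present the argument in the first form above, as it is a one-line application of a previously established result.
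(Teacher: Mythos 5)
Your second route (forming the difference $\widetilde{\vartheta}$ and sandwiching it between $0$ and $0$ via the minimum and maximum principles) is exactly the paper's proof, and your first route (applying the comparison principle twice with the roles of the two solutions swapped) is a logically equivalent repackaging, since the comparison principle was itself derived from the same difference construction. The proposal is correct and takes essentially the same approach as the paper.
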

\begin{proof}
On the contrary, assume that $\vartheta^{(1)}(\mathbf{x})$ and $\vartheta^{(2)}(\mathbf{x})$ are two solutions of the boundary value problem \eqref{Eqn:ROM_Qualitative_BoE_general}--\eqref{Eqn:ROM_Qualitative_inlet_general}. Define \begin{align}
    \widetilde{\vartheta}(\mathbf{x}) = \vartheta^{(1)}(\mathbf{x}) - \vartheta^{(2)}(\mathbf{x})
\end{align}
The difference, $\widetilde{\vartheta}(\mathbf{x})$, satisfies the following boundary value problem:
\begin{subequations}
\begin{alignat}{2}
    \label{Eqn:ROM_Qualitative_BoE_Uniqueness} 
    -&d \, \mathrm{div}[\mathbf{K}(\mathbf{x})\mathrm{grad}[\widetilde{\vartheta}]] = - h_{T} \widetilde{\vartheta}(\mathbf{x}) 
    && \quad \mathrm{in} \; \Omega \\
    \label{Eqn:ROM_Qualitative_q_jump_condition_Uniqueness} 
    -&d \, \llbracket\mathbf{K}(\mathbf{x}) \mathrm{grad}[\widetilde{\vartheta}]\rrbracket = \chi \,  \mathrm{grad}[\widetilde{\vartheta}] \cdot \widehat{\mathbf{t}}(\mathbf{x}) 
    && \quad \mathrm{on} \; \Sigma \\
    \label{Eqn:ROM_Qualitative_temp_jump_condition_Uniqueness} 
    &\llbracket\widetilde{\vartheta}(\mathbf{x})\rrbracket = 0  
    && \quad \mathrm{on} \; \Sigma \\
    \label{Eqn:ROM_Qualitative_q_BC_Uniqueness} 
    -&d \, \widehat{\mathbf{n}}(\mathbf{x}) \cdot \mathbf{K}(\mathbf{x})\mathrm{grad}[\widetilde{\vartheta}] = 0 
    && \quad \mathrm{on} \; \Gamma^{q}  \\
    \label{Eqn:ROM_Qualitative_temp_BC_Uniqueness} 
    &\widetilde{\vartheta}(\mathbf{x}) = 0 
    && \quad \mathrm{on} \; \Gamma^{\vartheta} \\
    \label{Eqn:ROM_Qualitative_inlet_BC_Uniqueness} 
    &\widetilde{\vartheta} = 0  
    && \quad \mathrm{at} \, s = 0 \, \mathrm{on} \, \Sigma  
\end{alignat}
\end{subequations}
The above boundary value problem \eqref{Eqn:ROM_Qualitative_BoE_Uniqueness}--\eqref{Eqn:ROM_Qualitative_inlet_BC_Uniqueness} resembles the original one \eqref{Eqn:ROM_Qualitative_BoE_general}--\eqref{Eqn:ROM_Qualitative_inlet_general} but with the values $f(\mathbf{x}) = 0$, $q^{\mathrm{p}}(\mathbf{x}) = 0$, $\vartheta_{\mathrm{p}}(\mathbf{x}) = 0$, $\vartheta_{\mathrm{inlet}} = 0$, and $\vartheta_{\mathrm{amb}} = 0$.
Noting these values and applying the minimum and maximum principles (i.e., Theorems \ref{Thm:ROM_Qualitative_Minimum_principle} and \ref{Thm:ROM_Qualitative_Maximum_principle}) to the boundary value problem \eqref{Eqn:ROM_Qualitative_BoE_Uniqueness}--\eqref{Eqn:ROM_Qualitative_inlet_BC_Uniqueness}, we get:
\begin{align}
    \label{Eqn:ROM_Qualitative_Inequality_Uniqueness}
    0 = \min\left[\vartheta_{\mathrm{amb}},\vartheta_{\mathrm{inlet}}, \min_{\mathbf{x} \in \Gamma^{\vartheta}} \big[ \vartheta_{\mathrm{p}}(\mathbf{x})\big] \right]
    \leq \widetilde{\vartheta}(\mathbf{x}) \leq
    \max\left[\vartheta_{\mathrm{amb}},\vartheta_{\mathrm{inlet}}, \max_{\mathbf{x} \in \Gamma^{\vartheta}} \big[ \vartheta_{\mathrm{p}}(\mathbf{x})\big] \right] = 0 
    \quad \forall \mathbf{x} \in \overline{\Omega}
\end{align}
In the above equation, the first inequality stems from the minimum principle while the second one is from the maximum principle. The equality on the either sides of Eq.~\eqref{Eqn:ROM_Qualitative_Inequality_Uniqueness} is a consequence of  $\vartheta_{\mathrm{p}}(\mathbf{x}) = 0$, $\vartheta_{\mathrm{inlet}} = 0$, and $\vartheta_{\mathrm{amb}} = 0$: which define the boundary value problem that governs $\widetilde{\vartheta}(\mathbf{x})$.

Hence, $\widetilde{\vartheta}(\mathbf{x}) = \vartheta^{(1)}(\mathbf{x}) - \vartheta^{(2)}(\mathbf{x}) = 0$ for all $\mathbf{x} \in \overline{\Omega}$, thereby establishing the uniqueness of solutions. 
\end{proof}

\begin{corollary}[Stability of solutions]
    \label{Cor:ROM_Qualitative_Stability}
    The solution fields continuously depend on $\vartheta_{\mathrm{p}}(\mathbf{x})$, $\vartheta_{\mathrm{inlet}}$, and $\vartheta_{\mathrm{amb}}$. 
\end{corollary}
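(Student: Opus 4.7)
The plan is to reduce stability to the already-proved minimum and maximum principles, exactly in the spirit of the uniqueness proof (Theorem \ref{Thm:ROM_Qualitative_Uniqueness}), but applied to differences of solutions corresponding to distinct prescribed data rather than to a single homogeneous problem. Concretely, I would fix $f(\mathbf{x})$ and $q_{\mathrm{p}}(\mathbf{x})$ and consider two solutions $\vartheta^{(1)}$ and $\vartheta^{(2)}$ corresponding to two choices of the triple $(\vartheta_{\mathrm{p}},\vartheta_{\mathrm{inlet}},\vartheta_{\mathrm{amb}})$, and then study their difference $\widetilde{\vartheta}(\mathbf{x}) := \vartheta^{(1)}(\mathbf{x}) - \vartheta^{(2)}(\mathbf{x})$.

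By linearity of the reduced-order model (radiation is neglected in this section), a direct subtraction shows that $\widetilde{\vartheta}$ solves a boundary value problem of the same form as \eqref{Eqn:ROM_Qualitative_BoE_general}--\eqref{Eqn:ROM_Qualitative_inlet_general}, but with vanishing source data $f\equiv 0$ and $q_{\mathrm{p}}\equiv 0$, and with the prescribed data replaced by the differences $\widetilde{\vartheta}_{\mathrm{p}} := \vartheta^{(1)}_{\mathrm{p}}-\vartheta^{(2)}_{\mathrm{p}}$, $\widetilde{\vartheta}_{\mathrm{inlet}} := \vartheta^{(1)}_{\mathrm{inlet}}-\vartheta^{(2)}_{\mathrm{inlet}}$, and $\widetilde{\vartheta}_{\mathrm{amb}} := \vartheta^{(1)}_{\mathrm{amb}}-\vartheta^{(2)}_{\mathrm{amb}}$. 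The jump conditions on $\Sigma$ and the Fourier law are unchanged, since these are linear and independent of the input data.

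I would then apply Theorems \ref{Thm:ROM_Qualitative_Minimum_principle} and \ref{Thm:ROM_Qualitative_Maximum_principle} to $\widetilde{\vartheta}$. Since the hypothesis on the source data ($f\equiv 0 \geq 0$ and $\leq 0$ a.e., likewise for $q_{\mathrm{p}}$) is trivially verified, both principles apply and sandwich $\widetilde{\vartheta}$ between the minimum and maximum of $\{\widetilde{\vartheta}_{\mathrm{amb}},\widetilde{\vartheta}_{\mathrm{inlet}},\widetilde{\vartheta}_{\mathrm{p}}(\mathbf{x})\}$. Taking absolute values gives the pointwise stability estimate
\begin{align*}
    \|\vartheta^{(1)} - \vartheta^{(2)}\|_{L^{\infty}(\overline{\Omega})}
    \;\leq\; \max\Big[\,|\widetilde{\vartheta}_{\mathrm{amb}}|,\;|\widetilde{\vartheta}_{\mathrm{inlet}}|,\;\|\widetilde{\vartheta}_{\mathrm{p}}\|_{L^{\infty}(\Gamma^{\vartheta})}\Big],
\end{align*}
which is precisely the continuous dependence asserted by the corollary, with the supremum norm on the domain controlled by the supremum norm on the input data.

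I do not anticipate any real obstacle here: the whole difficulty was already absorbed into the minimum and maximum principles, whose proofs handled the jump condition and the advective term along $\Sigma$. The only small care point is verifying that the subtraction indeed preserves all the jump and boundary structure --- which is immediate by linearity --- and noting that, although Theorem \ref{Thm:ROM_Qualitative_Minimum_principle} was stated with fixed $\vartheta_{\mathrm{amb}}$, its proof uses $\vartheta_{\mathrm{amb}}$ only through the quantity $\Phi_{\min}$, so replacing $\vartheta_{\mathrm{amb}}$ by the signed difference $\widetilde{\vartheta}_{\mathrm{amb}}$ poses no issue.
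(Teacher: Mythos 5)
Your proposal is correct and follows essentially the same route as the paper: form the difference of the two solutions, observe that it solves the homogeneous-source problem with the data differences playing the roles of $\vartheta_{\mathrm{amb}}$, $\vartheta_{\mathrm{inlet}}$, and $\vartheta_{\mathrm{p}}$, and sandwich it via Theorems \ref{Thm:ROM_Qualitative_Minimum_principle} and \ref{Thm:ROM_Qualitative_Maximum_principle} to get the supremum-norm estimate. The paper's proof is the same argument phrased with explicit small perturbations $\delta_1,\delta_2,\delta_3$ and their maximum $\delta$.
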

\begin{layperson}
    Small changes to the prescribed inputs---the prescribed temperature on the boundary, the inlet temperature, or the ambient temperature---will lead to small changes in the temperature.
\end{layperson}
\begin{proof}
Let $(\vartheta^{*}_{\mathrm{amb}},\vartheta^{*}_{\mathrm{input}},\vartheta^{*}_{\mathrm{p}}(\mathbf{x}))$ be the altered inputs that deviate from the original prescribed inputs $(\vartheta_{\mathrm{amb}},\vartheta_{\mathrm{input}},\vartheta_{\mathrm{p}}(\mathbf{x}))$ by small changes. Mathematically, 
\begin{align}
    \big\vert \vartheta^{*}_{\mathrm{amb}} - \vartheta_{\mathrm{amb}} \big\vert < \delta_1, \; 
    \big\vert \vartheta^{*}_{\mathrm{input}} - \vartheta_{\mathrm{input}} \big\vert < \delta_2 
    \; \mathrm{and} \; 
    \max_{\mathbf{x} \in \Gamma^{\vartheta}}\big\vert \vartheta^{*}_{\mathrm{p}}(\mathbf{x}) - \vartheta_{\mathrm{p}}(\mathbf{x}) \big\vert < \delta_3
\end{align}
where $\delta_1$, $\delta_2$ and $\delta_3$ are small real numbers. Let 
\begin{align}
    \delta := \max\big[\delta_1, \delta_2, \delta_3\big] 
\end{align}
Clearly, $\delta$ is also small. 

Let $\vartheta(\mathbf{x})$ and $\vartheta^{*}(\mathbf{x})$ are, respectively, solutions of the reduced-order model with the original and altered inputs. The difference $\widetilde{\vartheta}(\mathbf{x}) := \vartheta^{*}(\mathbf{x}) - \vartheta(\mathbf{x})$ satisfies the following boundary value problem:
\begin{subequations}
\begin{alignat}{2}
    \label{Eqn:ROM_Qualitative_BoE_Stability} 
    -&d \, \mathrm{div}[\mathbf{K}(\mathbf{x})\mathrm{grad}[\widetilde{\vartheta}]] = - h_{T} \left( \widetilde{\vartheta}(\mathbf{x}) 
    - \big(\vartheta^{*}_{\mathrm{amb}} - \vartheta_{\mathrm{amb}}\big) \right) 
    && \quad \mathrm{in} \; \Omega \\
    \label{Eqn:ROM_Qualitative_q_jump_condition_Stability} 
    -&d \, \llbracket\mathbf{K}(\mathbf{x}) \mathrm{grad}[\widetilde{\vartheta}]\rrbracket = \chi \,  \mathrm{grad}[\widetilde{\vartheta}] \cdot \widehat{\mathbf{t}}(\mathbf{x}) 
    && \quad \mathrm{on} \; \Sigma \\
    \label{Eqn:ROM_Qualitative_temp_jump_condition_Stability} 
    &\llbracket\widetilde{\vartheta}(\mathbf{x})\rrbracket = 0  
    && \quad \mathrm{on} \; \Sigma \\
    \label{Eqn:ROM_Qualitative_q_BC_Stability} 
    -&d \, \widehat{\mathbf{n}}(\mathbf{x}) \cdot \mathbf{K}(\mathbf{x})\mathrm{grad}[\widetilde{\vartheta}] = 0 
    && \quad \mathrm{on} \; \Gamma^{q}  \\
    \label{Eqn:ROM_Qualitative_temp_BC_Stability} 
    &\widetilde{\vartheta}(\mathbf{x}) = 
    \vartheta^{*}_{\mathrm{p}}(\mathbf{x}) - \vartheta_{\mathrm{p}}(\mathbf{x}) 
    && \quad \mathrm{on} \; \Gamma^{\vartheta} \\
    \label{Eqn:ROM_Qualitative_inlet_BC_Stability} 
    &\widetilde{\vartheta} = \vartheta^{*}_{\mathrm{inlet}} - \vartheta_{\mathrm{inlet}}  
    && \quad \mathrm{at} \, s = 0 \, \mathrm{on} \, \Sigma  
\end{alignat}
\end{subequations}

Since the applied heat in the domain and the prescribed heat flux on the boundary are zero (cf. Eqs.~\eqref{Eqn:ROM_Qualitative_BoE_Stability}--\eqref{Eqn:ROM_Qualitative_inlet_BC_Stability} and Eqs.~\eqref{Eqn:ROM_Qualitative_BoE_general}--\eqref{Eqn:ROM_Qualitative_inlet_general}, and note $f(\mathbf{x})$ and $q_{\mathrm{p}}(\mathbf{x})$), both minimum and maximum principles hold for $\widetilde{\vartheta}(\mathbf{x})$. By applying these two principles, we get 
\begin{align}
-\delta 
&< \min\big[
\vartheta^{*}_{\mathrm{amb}} - \vartheta_{\mathrm{amb}}, \vartheta^{*}_{\mathrm{inlet}} - \vartheta_{\mathrm{inlet}}, 
\min_{\mathbf{x} \in \Gamma^{\vartheta}} [\vartheta^{*}_{\mathrm{p}}(\mathbf{x}) - \vartheta_{\mathrm{p}}(\mathbf{x})] 
\big] 
\leq \widetilde{\vartheta}(\mathbf{x}) \nonumber \\ 
&\qquad \qquad \qquad \qquad \qquad \qquad 
\leq \max\big[
\vartheta^{*}_{\mathrm{amb}} - \vartheta_{\mathrm{amb}}, \vartheta^{*}_{\mathrm{inlet}} - \vartheta_{\mathrm{inlet}}, 
\max_{\mathbf{x} \in \Gamma^{\vartheta}} [\vartheta^{*}_{\mathrm{p}}(\mathbf{x}) - \vartheta_{\mathrm{p}}(\mathbf{x})] 
\big]
< \delta 
\end{align}
We thus have  
\begin{align}
\sup_{\mathbf{x} \in \overline{\Omega}} \, 
\big\vert
\vartheta^{*}(\mathbf{x}) -
\vartheta(\mathbf{x})
\big\vert
\leq \delta 
\end{align}
which means that the corresponding change to the temperature solution field is small.
\end{proof}

The above results are general: the material properties are allowed to vary spatially, the thermal conductivity can be anisotropic, the inlet temperature can be different from the ambient temperature, and the prescribed heat sources can vary spatially (i.e., $f(\mathbf{x})$ and $q_{\mathrm{p}}(\mathbf{x})$). It is straightforward to extend the above-given results even when the ambient temperature varies spatially (i.e., $\vartheta(\mathbf{x})$). In such situations, the statements in minimum and maximum principles (i.e., Eqs.~\eqref{Eqn:ROM_Qualitative_Minimum_principle_statement} and \eqref{Eqn:ROM_Qualitative_Maximum_principle_statement}), respectively, should read: 
\begin{subequations}
\begin{align}
    &\min\Big[\min_{\mathrm{x} \in \Omega} \big[\vartheta_{\mathrm{amb}}(\mathbf{x})\big],\vartheta_{\mathrm{inlet}}, \min_{\mathrm{x} \in \Gamma^{\vartheta}}\big[ \vartheta_{\mathrm{p}}(\mathbf{x})\big]\Big] \leq \vartheta(\mathbf{x}) 
    \quad \forall \mathbf{x} \in \overline{\Omega} \\
    &\vartheta(\mathbf{x}) \leq \min\Big[\max_{\mathrm{x} \in \Omega} \big[\vartheta_{\mathrm{amb}}(\mathbf{x})\big],\vartheta_{\mathrm{inlet}}, \max_{\mathrm{x} \in \Gamma^{\vartheta}}\big[ \vartheta_{\mathrm{p}}(\mathbf{x})\big]\Big] 
    \quad \forall \mathbf{x} \in \overline{\Omega}
    \end{align}
\end{subequations} 
However, the constant ambient temperature assumption is adequate for many engineering applications. 

\section{AN EXTENSION TO INCLUDE NONLINEAR RADIATION}
\label{Sec:S4_ROM_Qualitative_Radiation}

As mentioned earlier, the inclusion of radiation makes the mathematical model nonlinear. Nonlinear models do not always have unique solutions: which will be the case even for the reduced-order model considered in this paper. One needs to impose additional conditions to achieve uniqueness. We will now look at three simpler cases, which will suggest the additional restriction that needs to be placed in the mathematical analysis for establishing maximum and comparison principles and uniqueness of solutions under thermal regulation with radiation.  

\emph{First}, let us consider the simplest setting: pure radiation under a constant heat source (i.e., no convection, conduction and active cooling/heating due to sequestered fluids in the vasculature). The problem under the mentioned setting reads: 
\begin{align}
    f_0 - \varepsilon \, \sigma \, 
    \left(\vartheta^{4} - \vartheta^{4}_{\mathrm{amb}}\right) = 0 
\end{align}
where $f_0 \geq 0$ is the constant heat source. The above (algebraic) equation has four roots (i.e., solutions), amongst which only \emph{one positive} root, given by the following expression: 
\begin{align}
    \vartheta 
    = \sqrt[4]{ \frac{f_0}{\varepsilon \, \sigma} + \vartheta^{4}_{\mathrm{amb}}}
\end{align}
The other roots comprise two complex and one negative. 

\emph{Second}, we include convection in the above problem. The governing equation then reads: 
\begin{align}
    f_0 - h_T(\vartheta - \vartheta_{\mathrm{amb}}) - \varepsilon \, \sigma \, 
    \left(\vartheta^{4} - \vartheta^{4}_{\mathrm{amb}}\right) = 0 
\end{align}
which can rewritten as the following polynomial in terms of $\vartheta$:
\begin{align}
    \label{Eqn:ROM_Qualitative_Descrates}
    \varepsilon \, \sigma \, \vartheta^{4} 
    + h_T \, \vartheta 
    - \left(f_0 + h_T \,  \vartheta_{\mathrm{amb}} 
    + \varepsilon \, \sigma \, 
     \vartheta^{4}_{\mathrm{amb}}
     \right)
    = 0 
\end{align}
Gauss' fundamental theorem of algebra implies that the above polynomial  \eqref{Eqn:ROM_Qualitative_Descrates} has four roots (in the \emph{field} of complex numbers) \citep[page 172]{brown2009complex}. Moreover, the Gauss-Descartes's rule of signs implies that this polynomial has only \emph{one positive} solution, as there is only one algebraic sign change in the (real) coefficients of the polynomial, which is ordered in the decreasing powers of the variable $\vartheta$ \citep{curtiss1918recent}. 

\emph{Third}, we consider heat transfer from conduction, convection, and radiation---but not fluid-sequestered thermal regulation. The corresponding governing equation reads:
\begin{subequations}
\begin{align}
    -d \, \mathrm{div}[\mathbf{K}(\mathbf{x})\mathrm{grad}[\vartheta]] = f(\mathbf{x}) 
    - h_{T} (\vartheta(\mathbf{x}) - \vartheta_{\mathrm{amb}}) - \varepsilon \, \sigma \,  (\vartheta^{4}(\mathbf{x}) - \vartheta^{4}_{\mathrm{amb}}) 
\end{align}
\end{subequations}
This equation is a well-studied semi-linear PDE, and the theory of partial differential equations establishes that it has a \emph{unique non-negative} solution \citep[chapter 13]{mcowen1996partial}. 

The three simpler cases discussed above reveal that unless an additional condition---such as non-negativity of solutions---is imposed, one cannot expect unique solutions under radiation. Thus, it makes sense to establish maximum and comparison principles and uniqueness for non-negative solutions, instead of all possible solutions, for the general reduced-order model considered in this paper. The mathematical requirement of non-negativity for the temperature field does not limit its applicability to physical problems, as the absolute temperature is a positive quantity. 

To this end, we again appeal to the Galerkin weak formulation, which when the radiation is included, reads: Find $\vartheta(\mathbf{x}) \in \mathcal{U}$ such that we have
\begin{align}
    \label{Eqn:ROM_Qualitative_Galerkin_weak_form_Radiation}
    \int_{\Omega} d \, \mathrm{grad}[w] \cdot \mathbf{K}(\mathbf{x}) 
    \mathrm{grad}[\vartheta] \, \mathrm{d} \Omega 
    &+ \int_{\Omega} h_{T} \, w(\mathbf{x}) (\vartheta(\mathbf{x}) - 
    \vartheta_{\mathrm{amb}}) \, \mathrm{d} \Omega 
    \nonumber \\
    &+ \int_{\Omega} \varepsilon \, \sigma \, w(\mathbf{x}) \Big(\vartheta^{4}(\mathbf{x}) - 
    \vartheta^{4}_{\mathrm{amb}}\Big) \, \mathrm{d} \Omega 
    \nonumber \\
    &+ \int_{\Sigma} \chi \, w(\mathbf{x}) \mathrm{grad}[\vartheta] \cdot \widehat{\mathbf{t}}(\mathbf{x}) \, \mathrm{d} \Gamma \nonumber \\
    &= \int_{\Omega} w(\mathbf{x}) f(\mathbf{x}) \, \mathrm{d} \Omega 
    - \int_{\Gamma^{q}} w(\mathbf{x}) q_{\mathrm{p}}(\mathbf{x}) \, \mathrm{d} \Gamma 
    \quad \forall w(\mathbf{x}) \in \mathcal{W} 
\end{align}

Following the usual strategy for nonlinear problems, we will first establish a comparison principle, which will furnish us with a minimum/maximum principle. 

\begin{theorem}[A comparison principle considering radiation]
    \label{Thm:ROM_Qualitative_Comparison_principle_Radiation}
    For a given vasculature and ambient temperature, let 
    $\vartheta^{(1)}(\mathbf{x})$ and $\vartheta^{(2)}(\mathbf{x})$ are the \emph{non-negative} temperature fields under the prescribed inputs: 
    $\left(\vartheta^{(1)}_{\mathrm{inlet}},\vartheta^{(1)}_{\mathrm{p}}(\mathbf{x}),f^{(1)}(\mathbf{x}),q_{\mathrm{p}}^{(1)}(\mathbf{x})\right)$ and  $\left(\vartheta^{(2)}_{\mathrm{inlet}},\vartheta^{(2)}_{\mathrm{p}}(\mathbf{x}),f^{(2)}(\mathbf{x}),q_{\mathrm{p}}^{(2)}(\mathbf{x})\right)$, respectively. 
    If $\vartheta^{(1)}_{\mathrm{inlet}} \leq \vartheta^{(2)}_{\mathrm{inlet}}$, $\vartheta^{(1)}_{\mathrm{p}}(\mathbf{x}) \leq \vartheta^{(2)}_{\mathrm{p}}(\mathbf{x})$, $f^{(1)}(\mathbf{x}) \leq f^{(2)}(\mathbf{x})$, and  $q_{\mathrm{p}}^{(1)}(\mathbf{x}) \geq q_{\mathrm{p}}^{(2)}(\mathbf{x})$ then 
    \begin{align}
        \vartheta^{(1)}(\mathbf{x}) \leq \vartheta^{(2)}(\mathbf{x}) \quad 
        \forall \mathbf{x} \in \overline{\Omega}
    \end{align}
\end{theorem}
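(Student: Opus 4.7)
The plan is to mirror the strategy that worked for the linear comparison principle (Theorem \ref{Thm:ROM_Qualitative_CP}) and for the minimum principle (Theorem \ref{Thm:ROM_Qualitative_Minimum_principle}), but to handle the nonlinear radiation term by factoring the difference of fourth powers and using non-negativity of the two solutions. Specifically, I would set $\widetilde{\vartheta}(\mathbf{x}) := \vartheta^{(1)}(\mathbf{x}) - \vartheta^{(2)}(\mathbf{x})$ and subtract the Galerkin weak form \eqref{Eqn:ROM_Qualitative_Galerkin_weak_form_Radiation} written for $\vartheta^{(2)}$ from the one written for $\vartheta^{(1)}$. The only nontrivial piece is the radiation contribution, where I would use the identity
\begin{align*}
\bigl(\vartheta^{(1)}\bigr)^{4} - \bigl(\vartheta^{(2)}\bigr)^{4} = \widetilde{\vartheta}(\mathbf{x}) \, P(\mathbf{x}), \qquad P(\mathbf{x}) := \bigl(\vartheta^{(1)}\bigr)^{3} + \bigl(\vartheta^{(1)}\bigr)^{2}\vartheta^{(2)} + \vartheta^{(1)}\bigl(\vartheta^{(2)}\bigr)^{2} + \bigl(\vartheta^{(2)}\bigr)^{3}.
\end{align*}
Because both solutions are non-negative by hypothesis, $P(\mathbf{x}) \geq 0$ almost everywhere, and this is precisely why the hypothesis on non-negativity (whose necessity was motivated in the three simpler cases) is indispensable.

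Next, I would introduce the auxiliary field $\eta(\mathbf{x}) := \max[0,\widetilde{\vartheta}(\mathbf{x})]$, modelled after the $\eta$ used in the proof of Theorem \ref{Thm:ROM_Qualitative_Minimum_principle} but suited to proving an upper bound. The key properties to verify are: (i) $\eta(\mathbf{x}) \geq 0$; (ii) $\eta(\mathbf{x}) = 0$ on $\Gamma^{\vartheta}$, thanks to $\vartheta^{(1)}_{\mathrm{p}} \leq \vartheta^{(2)}_{\mathrm{p}}$; (iii) $\eta(\mathbf{x}) = 0$ at $s=0$, thanks to $\vartheta^{(1)}_{\mathrm{inlet}} \leq \vartheta^{(2)}_{\mathrm{inlet}}$; (iv) $\eta \in C^{1}(\Omega\setminus\Sigma) \cap C^{0}(\overline{\Omega})$; and (v) wherever $\eta(\mathbf{x}) \neq 0$ one has $\mathrm{grad}[\widetilde{\vartheta}] = \mathrm{grad}[\eta]$, while wherever $\eta(\mathbf{x}) = 0$ the integrand contribution vanishes. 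Properties (ii) and (iii) qualify $\eta$ as an admissible weighting function in $\mathcal{W}$.

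I would then test the subtracted weak form with $w = \eta$. On the right-hand side, the ordering $f^{(1)} \leq f^{(2)}$ and $q_{\mathrm{p}}^{(1)} \geq q_{\mathrm{p}}^{(2)}$ together with $\eta \geq 0$ imply
\begin{align*}
\int_{\Omega} \eta(\mathbf{x}) \bigl(f^{(1)} - f^{(2)}\bigr) \, \mathrm{d}\Omega - \int_{\Gamma^{q}} \eta(\mathbf{x}) \bigl(q_{\mathrm{p}}^{(1)} - q_{\mathrm{p}}^{(2)}\bigr) \, \mathrm{d}\Gamma \leq 0.
\end{align*}
On the left-hand side, after replacing $\mathrm{grad}[\widetilde{\vartheta}]$ by $\mathrm{grad}[\eta]$ on the support of $\eta$ (cf.\ property (v)), and using that $\eta^{2}$ is an antiderivative of $2\eta \, \mathrm{grad}[\eta]\cdot\widehat{\mathbf{t}}$ along $\Sigma$ combined with property (iii), I obtain
\begin{align*}
\int_{\Omega} d \, \mathrm{grad}[\eta]\cdot\mathbf{K}(\mathbf{x})\mathrm{grad}[\eta] \, \mathrm{d}\Omega + \int_{\Omega} h_{T} \, \eta^{2}(\mathbf{x}) \, \mathrm{d}\Omega + \int_{\Omega} \varepsilon \, \sigma \, P(\mathbf{x}) \, \eta^{2}(\mathbf{x}) \, \mathrm{d}\Omega + \frac{\chi}{2}\,\eta^{2}(\mathbf{x})\Big|_{s=1} \leq 0.
\end{align*}

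Since every term on the left is non-negative (uniform boundedness of $\mathbf{K}$, $h_T \geq 0$, $\varepsilon\sigma P \geq 0$, $\chi \geq 0$), each integral must vanish; in particular, the first integral is a norm on $\eta$ modulo constants, and properties (ii)--(iii) pin the constant to zero, forcing $\eta(\mathbf{x}) \equiv 0$ on $\overline{\Omega}$. This yields $\widetilde{\vartheta}(\mathbf{x}) \leq 0$, which is the desired conclusion $\vartheta^{(1)}(\mathbf{x}) \leq \vartheta^{(2)}(\mathbf{x})$. The main obstacle I anticipate is ensuring the nonlinear term behaves well: everything hinges on the factor $P(\mathbf{x})$ being non-negative, which in turn requires both $\vartheta^{(1)}$ and $\vartheta^{(2)}$ to be non-negative; this explains why the hypothesis cannot be dropped and why the uniqueness story for the radiative model is more delicate than in the linear case.
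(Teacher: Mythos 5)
Your argument is correct, but it follows a genuinely different route from the paper's. The paper handles the radiation nonlinearity by the mean value theorem: it writes $q_{\mathrm{rad}}(\vartheta^{(1)})-q_{\mathrm{rad}}(\vartheta^{(2)}) = 4\varepsilon\sigma v^{3}(\mathbf{x})\,(\vartheta^{(1)}-\vartheta^{(2)})$ for some intermediate value $v(\mathbf{x})$ lying between the two fields, absorbs the coefficient into an effective non-negative transfer coefficient $\widetilde{h}_T(\mathbf{x}) = h_T + 4\varepsilon\sigma v^{3}(\mathbf{x})$, and then simply invokes Corollary \ref{Cor:ROM_Qualitative_Nonnegative_solutions} (the linear non-negativity result) on the difference field --- so no new energy estimate is needed. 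You instead rerun the truncation-test-function argument of Theorem \ref{Thm:ROM_Qualitative_Minimum_principle} from scratch with $\eta = \max[0,\widetilde{\vartheta}]$, and you tame the nonlinear term with the exact algebraic factorization $a^4-b^4=(a-b)(a^3+a^2b+ab^2+b^3)$ rather than a linearization. Both proofs hinge on the same essential point --- non-negativity of the two temperature fields makes the radiation coefficient ($4\varepsilon\sigma v^3$ in the paper, $\varepsilon\sigma P$ in yours) non-negative, which is exactly why the hypothesis cannot be dropped. The paper's reduction is shorter because it reuses the linear machinery; yours is self-contained, avoids the (mild) measurability question of whether $\mathbf{x}\mapsto v(\mathbf{x})$ is a well-defined measurable field, and makes the sign structure fully explicit in the weak form. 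One cosmetic remark: your closing step is cleanest if you note that $h_T>0$ (as the paper assumes throughout) forces $\int_\Omega h_T\eta^2\,\mathrm{d}\Omega=0$ and hence $\eta\equiv 0$ directly by continuity, rather than appealing to the gradient term being a ``norm modulo constants''; either way the conclusion stands.
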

\begin{proof}
Noting the radiation component is 
\begin{align}
    q_{\mathrm{rad}}(\vartheta(\mathbf{x})) = \varepsilon \, \sigma \left(\vartheta^{4}(\mathbf{x}) 
    - \vartheta_{\mathrm{amb}}^4\right) 
\end{align}
we consider the following difference: 
\begin{align}
    q_{\mathrm{rad}}(\vartheta^{(1)}(\mathbf{x}))
    - q_{\mathrm{rad}}(\vartheta^{(2)}(\mathbf{x})) 
    = \varepsilon \, \sigma \left(\Big(\vartheta^{(1)}(\mathbf{x})\Big)^{4}
   - \Big(\vartheta^{(2)}(\mathbf{x})\Big)^{4}\right) 
\end{align}
In obtaining the above expression, we have used that the ambient temperature is the same for the two cases. The mean value theorem implies that there exists $v(\mathbf{x})$ with 
\begin{align}
v(\mathbf{x}) = (1 - \alpha) \, \vartheta^{(1)}(\mathbf{x}) 
+ \alpha \, \vartheta^{(2)}(\mathbf{x})  
\end{align}
for some $\alpha \in [0,1]$ such that 
\begin{align}
     q_{\mathrm{rad}}(\vartheta^{(1)}(\mathbf{x}))
    - q_{\mathrm{rad}}(\vartheta^{(2)}(\mathbf{x})) 
    = \frac{\mathrm{d} q_{\mathrm{rad}}}{\mathrm{d}\vartheta}\Big\vert_{\vartheta(\mathbf{x}) = v(\mathbf{x})} 
    (\vartheta^{(1)}(\mathbf{x}) - \vartheta^{(2)}(\mathbf{x})) 
    =4 \, \varepsilon \, \sigma \, v^{3}(\mathbf{x}) (\vartheta^{(1)}(\mathbf{x}) - \vartheta^{(2)}(\mathbf{x})) 
\end{align}
Since $\vartheta^{(1)}(\mathbf{x})$ and $\vartheta^{(2)}(\mathbf{x})$ are non-negative (part of the hypothesis), we have 
\begin{align}
v(\mathbf{x}) \geq 0 \quad \forall \mathbf{x} \in \Omega 
\end{align}
For convenience, we define
\begin{align}
    \widetilde{h}_{T}(\mathbf{x}) = h_T + \varepsilon \, \sigma v^{3}(\mathbf{x})
\end{align}
Clearly, $\widetilde{h}_T(\mathbf{x}) \geq 0$, as $h_T \geq 0$ and $v(\mathbf{x}) \geq 0$. 

The difference in the solution fields  
\begin{align}
\widetilde{\vartheta}(\mathbf{x}) = 
\vartheta^{(2)}(\mathbf{x}) - 
\vartheta^{(1)}(\mathbf{x})
\end{align}
satisfies the following equation: 
\begin{align}
    -\mathrm{div}\Big[\mathbf{K}(\mathbf{x}) \,  \mathrm{grad}[\widetilde{\vartheta}]\Big]
    + \widetilde{h}_{T}(\mathbf{x}) \widetilde{\vartheta}(\mathbf{x}) 
    = f^{(2)}(\mathbf{x}) - f^{(1)}(\mathbf{x})
\end{align}
The above differential operator is linear with respect to $\widetilde{\vartheta}(\mathbf{x})$. The mean value theorem allows us to convert a non-linear PDE into a linear one, thereby enabling us to avail the results from the previous section. 

Noting that $f^{(1)}(\mathbf{x}) \leq f^{(2)}(\mathbf{x})$ and the other orderings for the input, this difference $\widetilde{\vartheta}(\mathbf{x})$ satisfies the differential inequalities of Corollary \ref{Cor:ROM_Qualitative_Nonnegative_solutions} with $\widetilde{h}_{T}(\mathbf{x})$ instead of $h_T$. Thus, the same corollary (i.e., non-negative solutions) renders the desired result: 
\begin{align}
\vartheta^{(2)}(\mathbf{x}) - \vartheta^{(1)}(\mathbf{x}) = 
\widetilde{\vartheta}(\mathbf{x}) \geq 0 \quad \forall \mathbf{x} \in \overline{\Omega} 
\end{align}
\end{proof}

\begin{theorem}[A minimum principle considering radiation]
    \label{Thm:ROM_Qualitative_Minimum_principle_Radiation}
    Let 
    $\vartheta(\mathbf{x}) \in C^{1}(\Omega\setminus\Sigma) \cap C^{0}(\overline{\Omega})$ be a \emph{non-negative} solution of the Galerkin weak formulation 
    under $f(\mathbf{x}) \in L_2(\Omega)$ and $q_{\mathrm{p}}(\mathbf{x}) \in L_2(\Gamma^{q})$.
    If 
    \[
    f(\mathbf{x}) \geq 0 \quad \mathrm{a.e.} 
    \quad \mathrm{and} \quad 
    q_{\mathrm{p}}(\mathbf{x}) \leq 0
    \quad \mathrm{a.e.} 
    \]
    then the solution field $\vartheta(\mathbf{x})$ satisfies the following lower bound: 
    \begin{align}
    \min\Big[\vartheta_{\mathrm{amb}},\vartheta_{\mathrm{inlet}}, \min_{\mathrm{x} \in \Gamma^{\vartheta}}\big[ \vartheta_{\mathrm{p}}(\mathbf{x})\big]\Big] \leq \vartheta(\mathbf{x}) 
    \quad \forall \mathbf{x} \in \overline{\Omega}
    \end{align}
\end{theorem}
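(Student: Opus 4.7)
The strategy is to leverage the comparison principle of Theorem~\ref{Thm:ROM_Qualitative_Comparison_principle_Radiation} by constructing a constant ``auxiliary'' solution that serves as a pointwise lower bound. Write $\Phi_{\min} := \min\big[\vartheta_{\mathrm{amb}},\vartheta_{\mathrm{inlet}}, \min_{\mathbf{x} \in \Gamma^{\vartheta}} \vartheta_{\mathrm{p}}(\mathbf{x})\big]$. If $\Phi_{\min} < 0$, the bound is immediate from the hypothesized non-negativity of $\vartheta(\mathbf{x})$, so without loss of generality we may assume $\Phi_{\min} \geq 0$. Consider then the constant field $\vartheta^{(1)}(\mathbf{x}) \equiv \Phi_{\min}$. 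The plan is to identify the inputs for which this constant field is itself a solution of the reduced-order model under the same ambient temperature and vasculature, and then invoke the comparison principle against $\vartheta^{(2)}(\mathbf{x}) := \vartheta(\mathbf{x})$.

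Since the gradient of the constant field vanishes, both jump conditions on $\Sigma$ hold trivially, the heat flux across $\Gamma^{q}$ vanishes, and the interior equation forces
\begin{align*}
f^{(1)} = h_T (\Phi_{\min} - \vartheta_{\mathrm{amb}}) + \varepsilon \, \sigma (\Phi_{\min}^{4} - \vartheta_{\mathrm{amb}}^{4}),
\end{align*}
together with $q^{(1)}_{\mathrm{p}}(\mathbf{x}) = 0$, $\vartheta^{(1)}_{\mathrm{p}}(\mathbf{x}) = \Phi_{\min}$, and $\vartheta^{(1)}_{\mathrm{inlet}} = \Phi_{\min}$. Because $0 \leq \Phi_{\min} \leq \vartheta_{\mathrm{amb}}$, both the convective and the radiative contributions to $f^{(1)}$ are non-positive, whence $f^{(1)} \leq 0$.

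The final step is to verify the orderings required by the comparison principle. By the definition of $\Phi_{\min}$, we have $\vartheta^{(1)}_{\mathrm{inlet}} \leq \vartheta_{\mathrm{inlet}}$ and $\vartheta^{(1)}_{\mathrm{p}}(\mathbf{x}) \leq \vartheta_{\mathrm{p}}(\mathbf{x})$ pointwise on $\Gamma^{\vartheta}$. By the hypotheses of the theorem coupled with the derived sign of $f^{(1)}$, one has $f^{(1)} \leq 0 \leq f(\mathbf{x})$ a.e.\ and $q^{(1)}_{\mathrm{p}} = 0 \geq q_{\mathrm{p}}(\mathbf{x})$ a.e. Both $\vartheta^{(1)}$ and $\vartheta^{(2)}$ are non-negative. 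Applying Theorem~\ref{Thm:ROM_Qualitative_Comparison_principle_Radiation} then yields $\Phi_{\min} = \vartheta^{(1)}(\mathbf{x}) \leq \vartheta(\mathbf{x})$ for all $\mathbf{x} \in \overline{\Omega}$, which is precisely the asserted bound.

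The principal subtlety lies in arranging the correct sign of the effective source $f^{(1)}$: the comparison principle demands $f^{(1)} \leq f^{(2)}$ rather than equality, and this ordering is only achievable because $\Phi_{\min}$ is defined as a \emph{minimum} over the data (so, in particular, $\Phi_{\min} \leq \vartheta_{\mathrm{amb}}$), which forces both the convective and radiative mismatch terms to carry the needed sign. A direct energy argument mimicking the proof of Theorem~\ref{Thm:ROM_Qualitative_Minimum_principle} is also conceivable, but the nonlinear quartic term would require invoking the mean value theorem to linearize, essentially reproducing the machinery already packaged inside Theorem~\ref{Thm:ROM_Qualitative_Comparison_principle_Radiation}. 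Routing through the comparison principle is therefore the cleaner path.
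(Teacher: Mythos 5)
Your proposal is correct and follows essentially the same route as the paper: both take the constant field $\vartheta^{(1)}(\mathbf{x}) \equiv \Phi_{\min}$ as the lower comparison solution and invoke Theorem~\ref{Thm:ROM_Qualitative_Comparison_principle_Radiation} against $\vartheta^{(2)} = \vartheta$. In fact your write-up is slightly more careful than the paper's on two points: you correctly identify the effective source as $f^{(1)} = h_T(\Phi_{\min} - \vartheta_{\mathrm{amb}}) + \varepsilon\sigma(\Phi_{\min}^4 - \vartheta_{\mathrm{amb}}^4) \leq 0$ (the paper asserts $f^{(1)} = 0$, which only matters insofar as the ordering $f^{(1)} \leq f^{(2)}$ must hold, and it does), and you dispose of the case $\Phi_{\min} < 0$ separately so that the non-negativity hypothesis of the comparison principle is genuinely met by $\vartheta^{(1)}$.
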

\begin{proof}
To use the comparison principle (Theorem \ref{Thm:ROM_Qualitative_Comparison_principle_Radiation}), we take 
\begin{align}
\vartheta^{(1)}(\mathbf{x}) = \Phi_{\mathrm{min}} = \min\Big[\vartheta_{\mathrm{amb}},\vartheta_{\mathrm{inlet}}, \min_{\mathrm{x} \in \Gamma^{\vartheta}}\big[ \vartheta_{\mathrm{p}}(\mathbf{x})\big]\Big]
\end{align}
and $\vartheta^{(2)}(\mathbf{x}) = \vartheta(\mathbf{x})$. $\vartheta^{(1)}(\mathbf{x})$ is a solution for the input set:
\begin{align}
\left(\vartheta^{(1)}_{\mathrm{inlet}} = \Phi_{\mathrm{min}},\vartheta^{(1)}_{\mathrm{p}}(\mathbf{x})= \Phi_{\mathrm{min}},f^{(1)}(\mathbf{x})=0,q_{\mathrm{p}}^{(1)}(\mathbf{x})=0\right)
\end{align}
Noting that $\vartheta^{(2)}_{\mathrm{inlet}} \geq \Phi_{\mathrm{min}}$, $\vartheta^{(2)}_{\mathrm{p}}(\mathbf{x}) \geq \Phi_{\mathrm{min}}$, $f^{(2)}(\mathbf{x}) = f(\mathbf{x}) \geq 0$, $q^{(2)}_{\mathrm{p}}(\mathbf{x}) = q_{\mathrm{p}}(\mathbf{x}) \leq 0$, the comparison principle (Theorem \ref{Thm:ROM_Qualitative_Comparison_principle_Radiation}) gives the desired result:
\begin{align}
\min\Big[\vartheta_{\mathrm{amb}},\vartheta_{\mathrm{inlet}}, \min_{\mathrm{x} \in \Gamma^{\vartheta}}\big[ \vartheta_{\mathrm{p}}(\mathbf{x})\big]\Big] = \vartheta^{(1)}(\mathbf{x}) 
\leq \vartheta^{(2)}(\mathbf{x}) = \vartheta(\mathbf{x})
    \quad \forall \mathbf{x} \in \overline{\Omega}
\end{align}
\end{proof}

\begin{theorem}[A maximum principle considering radiation]
    \label{Thm:ROM_Qualitative_Maximum_principle_Radiation}
    Let 
    $\vartheta(\mathbf{x}) \in C^{1}(\Omega\setminus\Sigma) \cap C^{0}(\overline{\Omega})$ be a \emph{non-negative} solution of the Galerkin weak formulation 
    under $f(\mathbf{x}) \in L_2(\Omega)$ and $q_{\mathrm{p}}(\mathbf{x}) \in L_2(\Gamma^{q})$.
    If 
    \[
    f(\mathbf{x}) \leq 0 \quad \mathrm{a.e.} 
    \quad \mathrm{and} \quad 
    q_{\mathrm{p}}(\mathbf{x}) \geq 0
    \quad \mathrm{a.e.} 
    \]
    then the solution field $\vartheta(\mathbf{x})$ satisfies the following upper bound: 
    \begin{align}
    \vartheta(\mathbf{x}) \leq 
    \max\Big[\vartheta_{\mathrm{amb}},\vartheta_{\mathrm{inlet}}, \max_{\mathrm{x} \in \Gamma^{\vartheta}}\big[ \vartheta_{\mathrm{p}}(\mathbf{x})\big]\Big] 
    \quad \forall \mathbf{x} \in \overline{\Omega}
    \end{align}
\end{theorem}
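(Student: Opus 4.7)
The plan is to mirror the strategy used for Theorem \ref{Thm:ROM_Qualitative_Minimum_principle_Radiation}: invoke the comparison principle with radiation (Theorem \ref{Thm:ROM_Qualitative_Comparison_principle_Radiation}) by pitting the given solution $\vartheta(\mathbf{x})$ against a judiciously chosen constant comparison field. Define
\[
\Phi_{\max} := \max\Big[\vartheta_{\mathrm{amb}},\vartheta_{\mathrm{inlet}}, \max_{\mathbf{x} \in \Gamma^{\vartheta}}\big[\vartheta_{\mathrm{p}}(\mathbf{x})\big]\Big],
\]
which is non-negative because the absolute temperatures appearing as inputs are non-negative.

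Concretely, I would set $\vartheta^{(1)}(\mathbf{x}) = \vartheta(\mathbf{x})$ with inputs $(\vartheta_{\mathrm{inlet}}, \vartheta_{\mathrm{p}}(\mathbf{x}), f(\mathbf{x}), q_{\mathrm{p}}(\mathbf{x}))$, and $\vartheta^{(2)}(\mathbf{x}) = \Phi_{\max}$. Because $\Phi_{\max}$ is constant, its gradient vanishes, so both jump conditions along $\Sigma$ are trivially satisfied, and one can take the Neumann datum $q^{(2)}_{\mathrm{p}} = 0$ on $\Gamma^{q}$. For $\Phi_{\max}$ to solve the strong form with Dirichlet datum $\Phi_{\max}$ on $\Gamma^{\vartheta}$ and inlet value $\Phi_{\max}$, the volumetric source must compensate for convective and radiative exchange with the ambient, namely
\[
f^{(2)} = h_T \,(\Phi_{\max} - \vartheta_{\mathrm{amb}}) + \varepsilon \, \sigma \, (\Phi_{\max}^{4} - \vartheta_{\mathrm{amb}}^{4}),
\]
which is non-negative precisely because $\Phi_{\max} \geq \vartheta_{\mathrm{amb}}$ by construction.

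With these choices, the hypotheses of Theorem \ref{Thm:ROM_Qualitative_Comparison_principle_Radiation} hold termwise: $\vartheta^{(1)}_{\mathrm{inlet}} \leq \Phi_{\max} = \vartheta^{(2)}_{\mathrm{inlet}}$, $\vartheta^{(1)}_{\mathrm{p}}(\mathbf{x}) \leq \Phi_{\max} = \vartheta^{(2)}_{\mathrm{p}}(\mathbf{x})$, $f^{(1)}(\mathbf{x}) = f(\mathbf{x}) \leq 0 \leq f^{(2)}$, and $q^{(1)}_{\mathrm{p}}(\mathbf{x}) = q_{\mathrm{p}}(\mathbf{x}) \geq 0 = q^{(2)}_{\mathrm{p}}(\mathbf{x})$. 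Both comparators are non-negative, as required for the radiation version of the comparison principle. Applying Theorem \ref{Thm:ROM_Qualitative_Comparison_principle_Radiation} then yields $\vartheta(\mathbf{x}) = \vartheta^{(1)}(\mathbf{x}) \leq \vartheta^{(2)}(\mathbf{x}) = \Phi_{\max}$ for all $\mathbf{x} \in \overline{\Omega}$, which is precisely the claimed upper bound.

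I expect the only delicate step to be confirming that the constant field $\Phi_{\max}$ is an admissible second comparator—specifically, verifying that the induced input tuple $(\Phi_{\max}, \Phi_{\max}, f^{(2)}, 0)$ is consistent with the sign ordering required by Theorem \ref{Thm:ROM_Qualitative_Comparison_principle_Radiation} in the presence of the nonlinear radiation term. An alternative route via the substitution $\varphi = -\vartheta$, which succeeded in the linear setting of Theorem \ref{Thm:ROM_Qualitative_Maximum_principle}, does \emph{not} transfer here because $\varphi^{4} = \vartheta^{4}$ prevents the radiation term from simply changing sign; hence the comparison-principle route outlined above is the natural and clean path.
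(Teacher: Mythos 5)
Your proposal is correct and follows essentially the same route as the paper: take $\vartheta^{(1)} = \vartheta$, $\vartheta^{(2)} = \Phi_{\max}$, and invoke the comparison principle with radiation (Theorem \ref{Thm:ROM_Qualitative_Comparison_principle_Radiation}). In fact you are slightly more careful than the paper on one point: the paper assigns $f^{(2)} = 0$ to the constant comparator $\Phi_{\max}$, whereas your compensating source $f^{(2)} = h_T(\Phi_{\max} - \vartheta_{\mathrm{amb}}) + \varepsilon\sigma(\Phi_{\max}^4 - \vartheta_{\mathrm{amb}}^4) \geq 0$ is what is actually required for $\Phi_{\max}$ to solve the radiative equation, and the ordering $f^{(1)} \leq 0 \leq f^{(2)}$ still holds, so the argument goes through unchanged.
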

\begin{proof}
To use the comparison principle (Theorem \ref{Thm:ROM_Qualitative_Comparison_principle_Radiation}), we take $\vartheta^{(1)}(\mathbf{x}) = \vartheta(\mathbf{x})$ and 
\begin{align}
\vartheta^{(2)}(\mathbf{x}) = \Phi_{\mathrm{max}} = \max\Big[\vartheta_{\mathrm{amb}},\vartheta_{\mathrm{inlet}}, \max_{\mathrm{x} \in \Gamma^{\vartheta}}\big[ \vartheta_{\mathrm{p}}(\mathbf{x})\big]\Big]
\end{align}
$\vartheta^{(2)}(\mathbf{x})$ is a solution for the input set:
\begin{align}
\left(\vartheta^{(2)}_{\mathrm{inlet}} = \Phi_{\mathrm{max}},\vartheta^{(2)}_{\mathrm{p}}(\mathbf{x})= \Phi_{\mathrm{max}},f^{(2)}(\mathbf{x})=0,q_{\mathrm{p}}^{(2)}(\mathbf{x})=0\right)
\end{align}
Noting that $\vartheta^{(1)}_{\mathrm{inlet}} \leq \Phi_{\mathrm{max}}$, $\vartheta^{(1)}_{\mathrm{p}}(\mathbf{x}) \geq \Phi_{\mathrm{max}}$, $f^{(1)}(\mathbf{x}) = f(\mathbf{x}) \leq 0$, $q^{(1)}_{\mathrm{p}}(\mathbf{x}) = q_{\mathrm{p}}(\mathbf{x}) \geq 0$, the comparison principle (Theorem \ref{Thm:ROM_Qualitative_Comparison_principle_Radiation}) gives the desired result:
\begin{align}
\vartheta(\mathbf{x}) = \vartheta^{(1)}(\mathbf{x}) 
\leq \vartheta^{(2)}(\mathbf{x}) = 
\max\Big[\vartheta_{\mathrm{amb}},\vartheta_{\mathrm{inlet}}, \max_{\mathrm{x} \in \Gamma^{\vartheta}}\big[ \vartheta_{\mathrm{p}}(\mathbf{x})\big]\Big]
    \quad \forall \mathbf{x} \in \overline{\Omega}
\end{align}
\end{proof}

Establishing uniqueness and stability of solutions are technical for nonlinear problems. Hence, we will not pursue them in this paper. 

\section{SPECIAL CASE: CONSTANT HEAT SOURCE AND ADIABATIC BOUNDARIES} 
\label{Sec:S5_ROM_Qualitative_Special_case}

\subsection{Useful definitions}
The \emph{mean temperature} is defined as follows: 
\begin{align}
    \label{Eqn:ROM_Sensitivity_mean_temp}
    \vartheta_{\mathrm{mean}} := \frac{1}{\mathrm{meas}(\Omega)}\int_{\Omega} 
    \vartheta(\mathbf{x}) \, \mathrm{d} \Omega 
\end{align}
where $\mathrm{meas}(\Omega)$ denotes the (set) measure of $\Omega$. Since $\Omega$ is a surface, $\mathrm{meas}(\Omega)$ is the area of the domain $\Omega$.

The \emph{hot steady-state} (HSS) refers to the steady-state achieved by the system under no flow of coolant within the vasculature (i.e., $\dot{m} = 0$). We denote the temperature at the hot steady-state by $\vartheta_{\mathrm{HSS}}$. By appealing to the balance of energy, the rate of heat supplied should be equal to the rate of heat lost due to cooling, as we have adiabatic lateral boundaries. This implies that, under constant heat source $f(\mathbf{x}) = f_0$, the temperature at HSS, denoted by $\vartheta_{\mathrm{HSS}}$, satisfies: 
\begin{align}
    \label{Eqn:ROM_Sensitivity_THSS_definition}
    h_{T} \, (\vartheta_{\mathrm{HSS}} - \vartheta_{\mathrm{amb}}) + \varepsilon \, \sigma \, (\vartheta_{\mathrm{HSS}}^4 - \vartheta_{\mathrm{amb}}^4) = f_0 
\end{align}
Under pure convection (without radiation), $\vartheta_{\mathrm{HSS}}$ can be written as: 
\begin{align}
    \label{Eqn:ROM_Sensitivity_THSS_definition_convection}
    \vartheta_{\mathrm{HSS}} = \vartheta_{\mathrm{amb}} + \frac{1}{h_T} f_{0}
\end{align}
Clearly, for a heat source $f_{0} \geq 0$, we have 
\begin{align}
    \vartheta_{\mathrm{amb}} \leq \vartheta_{\mathrm{HSS}}
\end{align}
Also, $\vartheta_{\mathrm{HSS}}$ is independent of $\mathbf{x}$ for a constant $f(\mathbf{x})$.

Then, for a constant heat source, we define the \emph{active-cooling} to be the case when $\vartheta_{\mathrm{input}} \leq \vartheta_{\mathrm{HSS}}$, and \emph{active-heating} occurs when $\vartheta_{\mathrm{input}} \geq \vartheta_{\mathrm{HSS}}$.

\subsection{Special case} We will now consider a special case with the following assumptions: 
\begin{enumerate}
    \item[(A1)] The heat source is constant: $f(\mathbf{x}) = f_0$. 
    \item[(A2)] The entire lateral boundary is adiabatic (i.e., $\partial \Omega = \Gamma^{q}$ and $q_{\mathrm{p}}(\mathbf{x}) = 0$). 
    \item[(A3)] The inlet temperature is lower than the hot steady-state (i.e., $\vartheta_{\mathrm{inlet}} \leq \vartheta_{\mathrm{HSS}}$): active-cooling. 
\end{enumerate}

The motivation behind this special case is that many experiments and numerical simulations reported in the literature have used such conditions, for example, \citep{devi2021microvascular,pejman2019gradient}. These studies have also assumed $\vartheta_{\mathrm{inlet}} = \vartheta_{\mathrm{amb}}$ besides the above three assumptions. However, we do not use this fourth condition in the analysis presented in this section, as the properties we derive are unaffected with this additional condition if the above three conditions are met. Some of these properties, in the form of bounds, are manifest: a direct consequence of the principles discussed in the previous section. While some others are not, as they are valid only under the restrictive assumptions of the special case.

\subsubsection{Temperature is bounded below by $\vartheta_{\mathrm{inlet}}$ and above by $\vartheta_{\mathrm{HSS}}$}
Noting that $\Gamma^{\vartheta} = \emptyset$, the minimum principle (Theorem \ref{Thm:ROM_Qualitative_Minimum_principle}) provides the following lower bound: 
\begin{align}
    \label{Eqn:ROM_Qualitative_special_case_To_wit}
    \min\big[\vartheta_{\mathrm{amb}}, \vartheta_{\mathrm{inlet}}\big]
    \leq \vartheta(\mathbf{x}) 
    \quad \forall \mathbf{x} \in \overline{\Omega}
\end{align}
But invoking further that the heat source is uniform will render a tighter lower bound and an upper bound; the next proposition establishes these bounds.

\begin{proposition}
Under the special case (i.e., assumptions A1--A3 hold), the temperature field satisfies the following bounds:
\begin{align}
    \vartheta_{\mathrm{inlet}}
    \leq \vartheta(\mathbf{x}) 
    \leq \vartheta_{\mathrm{HSS}}
    \quad \forall \mathbf{x} \in \overline{\Omega}
\end{align}
\end{proposition}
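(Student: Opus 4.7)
The plan is to invoke the comparison principle---Theorem \ref{Thm:ROM_Qualitative_CP} in the linear case, or its radiative counterpart Theorem \ref{Thm:ROM_Qualitative_Comparison_principle_Radiation} when radiation is retained---twice, with two carefully chosen \emph{constant} auxiliary temperature fields that sandwich $\vartheta(\mathbf{x})$ between $\vartheta_{\mathrm{inlet}}$ and $\vartheta_{\mathrm{HSS}}$. The key structural observation is that, under assumptions A1--A2, any spatial constant automatically satisfies the conduction term, both jump conditions on $\Sigma$, and the adiabatic Neumann condition on $\partial\Omega$, so the only non-trivial requirement reduces to the bulk pointwise energy balance and the inlet condition.

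For the upper bound, I would take $\vartheta^{(2)}(\mathbf{x})\equiv \vartheta_{\mathrm{HSS}}$. Its vanishing gradient kills conduction and the tangential transport along $\Sigma$, while the bulk balance collapses to Eq.~\eqref{Eqn:ROM_Sensitivity_THSS_definition}, which holds by the very definition of $\vartheta_{\mathrm{HSS}}$. Hence $\vartheta^{(2)}$ is the solution associated with the input set $(\vartheta_{\mathrm{inlet}}^{(2)},\vartheta_{\mathrm{p}}^{(2)},f^{(2)},q_{\mathrm{p}}^{(2)})=(\vartheta_{\mathrm{HSS}},\text{--},f_0,0)$. Letting $\vartheta^{(1)}(\mathbf{x}):=\vartheta(\mathbf{x})$ be the solution corresponding to $(\vartheta_{\mathrm{inlet}},\text{--},f_0,0)$, assumption A3 gives $\vartheta^{(1)}_{\mathrm{inlet}}\leq \vartheta^{(2)}_{\mathrm{inlet}}$, and the remaining input orderings required by the comparison principle hold with equality (the condition on $\vartheta_{\mathrm{p}}$ is vacuous since $\Gamma^{\vartheta}=\emptyset$ by A2). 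The comparison principle then yields $\vartheta(\mathbf{x})\leq \vartheta_{\mathrm{HSS}}$.

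For the lower bound, I would reverse roles and take the constant $\vartheta^{(1)}(\mathbf{x})\equiv \vartheta_{\mathrm{inlet}}$. This constant field solves the reduced-order model for the input set $(\vartheta_{\mathrm{inlet}},\text{--},f^{(1)},0)$ with effective heat source
\begin{equation*}
f^{(1)} := h_T\,(\vartheta_{\mathrm{inlet}}-\vartheta_{\mathrm{amb}}) + \varepsilon\,\sigma\,(\vartheta_{\mathrm{inlet}}^{4}-\vartheta_{\mathrm{amb}}^{4}).
\end{equation*}
Because the map $\vartheta \mapsto h_T(\vartheta-\vartheta_{\mathrm{amb}})+\varepsilon\sigma(\vartheta^{4}-\vartheta_{\mathrm{amb}}^{4})$ is monotone increasing on $[\vartheta_{\mathrm{amb}},\infty)$, assumption A3 combined with the defining relation \eqref{Eqn:ROM_Sensitivity_THSS_definition} gives $f^{(1)}\leq f_0$; if instead $\vartheta_{\mathrm{inlet}}<\vartheta_{\mathrm{amb}}$, then $f^{(1)}\leq 0\leq f_0$ directly. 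Letting $\vartheta^{(2)}(\mathbf{x}):=\vartheta(\mathbf{x})$, with input $(\vartheta_{\mathrm{inlet}},\text{--},f_0,0)$, both inlet temperatures agree and $f^{(1)}\leq f^{(2)}$, so the comparison principle delivers $\vartheta_{\mathrm{inlet}}\leq \vartheta(\mathbf{x})$.

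The main subtlety---rather than a genuine obstacle---is twofold: first, recognizing that spatial constants are admissible solutions precisely because of the adiabatic boundary (A2) and the spatial uniformity of the source (A1), which renders every differential term in the governing equations vacuous; second, in the radiative setting, ensuring the hypotheses of Theorem \ref{Thm:ROM_Qualitative_Comparison_principle_Radiation} are met, namely non-negativity of both auxiliary fields. The latter follows whenever the prescribed temperatures represent physical absolute temperatures, with non-negativity of $\vartheta(\mathbf{x})$ itself furnished by Corollary \ref{Cor:ROM_Qualitative_Nonnegative_solutions} or Theorem \ref{Thm:ROM_Qualitative_Minimum_principle_Radiation}.
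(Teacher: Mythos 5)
Your proposal is correct and follows essentially the same route as the paper: both bounds are obtained by comparing $\vartheta(\mathbf{x})$ against the constant fields $\vartheta_{\mathrm{inlet}}$ and $\vartheta_{\mathrm{HSS}}$ via the comparison principle, with the constant fields recognized as exact solutions for suitably chosen source terms. The only cosmetic difference is that the paper dispatches the case $\vartheta_{\mathrm{inlet}} < \vartheta_{\mathrm{amb}}$ by citing the minimum principle directly, whereas you fold it into the same comparison argument by noting $f^{(1)} \leq 0 \leq f_0$; your explicit justification of $f^{(1)} \leq f_0$ via monotonicity of the convection-radiation map and the defining relation for $\vartheta_{\mathrm{HSS}}$ is, if anything, slightly cleaner than the paper's.
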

\begin{proof}
    We will first establish the lower bound. If $\vartheta_{\mathrm{inlet}} <  \vartheta_{\mathrm{amb}}$, then the lower bound is manifest from Eq.~\eqref{Eqn:ROM_Qualitative_special_case_To_wit}. For the case $\vartheta_{\mathrm{amb}} \leq \vartheta_{\mathrm{inlet}}$, we invoke the comparison principle (Theorem \ref{Thm:ROM_Qualitative_Comparison_principle_Radiation}) by taking $\vartheta^{(1)}(\mathbf{x}) = \vartheta_{\mathrm{inlet}}$ and $\vartheta^{(2)}(\mathbf{x}) = \vartheta(\mathbf{x})$. $\vartheta^{(1)}(\mathbf{x}) =  \vartheta_{\mathrm{inlet}}$ is the solution for the input set: 
    \begin{align}
    \left(\vartheta^{(1)}_{\mathrm{inlet}} = \vartheta_{\mathrm{inlet}},f^{(1)}(\mathbf{x}) = f_0 - h_T(\vartheta_{\mathrm{inlet}} - \vartheta_{\mathrm{amb}}) - \varepsilon \, \sigma (\vartheta^{4}_{\mathrm{inlet}} - \vartheta^{4}_{\mathrm{amb}}),q_{\mathrm{p}}^{(1)}(\mathbf{x})=0\right)
    \end{align}
    Since $\vartheta_{\mathrm{amb}} \leq \vartheta_{\mathrm{inlet}}$, $f^{(1)}(\mathbf{x}) \leq f_0$. We also note that  $\vartheta^{(2)}_{\mathrm{inlet}} = \vartheta_{\mathrm{inlet}}$, $f^{(2)}(\mathbf{x}) = f_0$, $q^{(2)}_{\mathrm{p}}(\mathbf{x}) = q_{\mathrm{p}}(\mathbf{x}) = 0$. Then comparison principle implies that 
    \begin{align}
    \vartheta_{\mathrm{inlet}} 
    = \vartheta^{(1)}(\mathbf{x}) 
    \leq \vartheta^{(2)}(\mathbf{x}) 
    = \vartheta(\mathbf{x}) 
    \quad \forall \mathbf{x} \in \overline{\Omega}
    \end{align}
    
    For the upper bound, we will again invoke comparison principle by taking $\vartheta^{(1)}(\mathbf{x}) = \vartheta(\mathbf{x})$ and $\vartheta^{(2)}(\mathbf{x}) = \vartheta_{\mathrm{HSS}}$. The latter is a solution for the input set:
    \begin{align}
\left(\vartheta^{(2)}_{\mathrm{inlet}} = \Phi_{\mathrm{HSS}},f^{(2)}(\mathbf{x})=f_0,q_{\mathrm{p}}^{(2)}(\mathbf{x})=0\right)
\end{align}
Note that, in writing the above input set, $\vartheta_{\mathrm{p}}(\mathbf{x})$ in not included, as the entire boundary under the special case is prescribed with the heat flux (i.e., $\Gamma^{\vartheta} = \emptyset$). 
Further noting that $\vartheta^{(1)}_{\mathrm{inlet}} = \Phi_{\mathrm{inlet}} \leq \Phi_{\mathrm{HSS}}$, $f^{(1)}(\mathbf{x}) = f_0$, $q^{(1)}_{\mathrm{p}}(\mathbf{x}) = q_{\mathrm{p}}(\mathbf{x}) = 0$, the comparison principle (Theorem \ref{Thm:ROM_Qualitative_Comparison_principle_Radiation}) gives the desired upper bound:
\begin{align}
    \vartheta(\mathrm{x}) 
    = \vartheta^{(1)}(\mathbf{x}) 
    \leq \vartheta^{(2)}(\mathbf{x}) 
    = \vartheta_{\mathrm{HSS}}
    \quad \forall \mathbf{x} \in \overline{\Omega}
\end{align}
\end{proof}
    
\subsubsection{Mean temperature is bounded below by $\vartheta_{\mathrm{inlet}}$ and above by $\vartheta_{\mathrm{HSS}}$}

This result is a direct consequence of the previous result: if the entire temperature field lies between $\vartheta_{\mathrm{inlet}}$ and $\vartheta_{\mathrm{HSS}}$, the mean temperature should also respect the bounds. An alternative proof, without appealing to the comparison principle, is also provided in Appendix \ref{ROM_Qualitative_Appendix}. 

\subsubsection{Outlet temperature is bounded below by $\vartheta_{\mathrm{inlet}}$}
Integrating the governing equation \eqref{Eqn:ROM_Qualitative_BoE_general} over the domain, using the divergence theorem and the jump condition \eqref{Eqn:ROM_Qualitative_q_jump_condition_general}, and invoking the definition of $\vartheta_{\mathrm{HSS}}$ (i.e., Eq.~\eqref{Eqn:ROM_Sensitivity_THSS_definition}), we obtain the following equation: 
\begin{align}
    \chi \, (\vartheta_{\mathrm{outlet}} - \vartheta_{\mathrm{inlet}}) 
    = \int_{\Omega} h_{T} \, (\vartheta_{\mathrm{HSS}} - \vartheta(\mathbf{x})) \,  \mathrm{d}\Omega 
    + \int_{\Omega} \varepsilon 
    \, \sigma \, 
    \Big(\vartheta^{4}_{\mathrm{HSS}} - \vartheta^{4}(\mathbf{x})\Big) 
    \,\mathrm{d} \Omega 
\end{align}
Since the temperature field is bounded above by $\vartheta_{\mathrm{HSS}}$ and $\chi \geq 0$, we establish:
\begin{align}
    \vartheta_{\mathrm{inlet}} \leq \vartheta_{\mathrm{outlet}}
\end{align}

\begin{tcolorbox}
For the special case considered in this section, if $\vartheta_{\mathrm{inlet}} \leq \vartheta_{\mathrm{HSS}}$, irrespective of the magnitude of the ambient temperature, we have the following orderings:
\begin{align}
    &\vartheta_{\mathrm{inlet}} 
    \leq \vartheta(\mathbf{x}) 
    \leq \vartheta_{\mathrm{HSS}} 
    \quad \forall \mathbf{x} \\ 
    &\vartheta_{\mathrm{inlet}} 
    \leq \vartheta_{\mathrm{mean}} 
    \leq \vartheta_{\mathrm{HSS}} \\
    \label{Eqn:ROM_Special_case_inlet_LT_outlet}
    &\vartheta_{\mathrm{inlet}} \leq 
    \vartheta_{\mathrm{outlet}}
\end{align}
\end{tcolorbox}

\section{ON THE POSSIBILITY OF $\vartheta_{\mathrm{inlet}} \geq \vartheta_{\mathrm{outlet}}$} 
\label{Sec:S6_ROM_TOutlet_LT_TInlet}

Given the ordering of the inlet and outlet temperatures under the special case (i.e., Eq.~\eqref{Eqn:ROM_Special_case_inlet_LT_outlet}), a natural question arises: Is it possible to have $\vartheta_{\mathrm{inlet}} \geq \vartheta_{\mathrm{outlet}}$ under a heat source (i.e., $f(\mathbf{x}) \geq 0$) and with adiabatic lateral boundaries?

Intuition gained from prior studies (which have used the conditions similar to the special case discussed in \S\ref{Sec:S5_ROM_Qualitative_Special_case} or $\vartheta_{\mathrm{inlet}} = \vartheta_{\mathrm{amb}}$) might suggest the answer to the above question is negative. However,  we show that the answer to this question is affirmative. We use mathematical analysis to construct a corresponding scenario logically. 

We first note from \S\ref{Sec:S5_ROM_Qualitative_Special_case} that if the heat source is uniform throughout the domain (i.e., $f(\mathbf{x}) = f_0 > 0$ in $\Omega$) then 
\begin{align}
    \vartheta_{\mathrm{input}} \leq \vartheta(\mathbf{x}) \leq 
    \vartheta_{\mathrm{HSS}} \quad \forall \mathbf{x} \in \overline{\Omega} 
\end{align}
irrespective of the ordering of the ambient and inlet temperatures (i.e., whether $\vartheta_{\mathrm{inlet}} \geq \vartheta_{\mathrm{amb}}$ or not). Hence, for the mentioned case $\vartheta_{\mathrm{input}} \leq \vartheta_{\mathrm{output}}$.
Next, we note that for a general heat source---not necessarily constant, $\vartheta_{\mathrm{inlet}} \leq \vartheta_{\mathrm{amb}}$, and adiabatic lateral boundaries (i.e., $\Gamma^{\vartheta} = \emptyset$), the minimum principle (Theorem \ref{Thm:ROM_Qualitative_Minimum_principle_Radiation}) implies: 
\begin{align}
    \vartheta_{\mathrm{input}} = \min[\vartheta_{\mathrm{amb}},\vartheta_{\mathrm{input}}] \leq \vartheta(\mathbf{x}) \quad \forall \mathbf{x} \in \overline{\Omega} 
\end{align}
Thus, we once again have $ \vartheta_{\mathrm{input}} \leq 
    \vartheta_{\mathrm{output}}$.
So, to construct a simple counterexample, we need to break the conditions in the two cases just discussed: select a non-uniform heat source with $\vartheta_{\mathrm{inlet}} \geq \vartheta_{\mathrm{amb}}$.

\textbf{Figure~\ref{Fig:ROM_TOutlet_LT_TInlet}A} provides one such boundary value problem: the heat source applied only on a portion of the domain---to achieve non-uniformity. We solved this problem numerically using the weak form  capability available in \citet[version 5.6]{COMSOL}.  \textbf{Table~\ref{Table:ROM_Qualitative_Simulation_parameters}} provides the parameters used in the numerical simulation, with $\vartheta_{\mathrm{amb}} = 298.15 \; \mathrm{K}$ lower than $\vartheta_{\mathrm{inlet}} = 315 \; \mathrm{K}$. \textbf{Figure~\ref{Fig:ROM_TOutlet_LT_TInlet}B} shows the associated numerical result using the finite element method with the Galerkin weak formulation (i.e., Eq.~\eqref{Eqn:ROM_Qualitative_Galerkin_weak_form_Radiation}). The import of this numerical result is multi-fold: 
\begin{enumerate}
    \item It verifies the maximum and comparison principles and illustrates their utility. 
    \item It also manifests clearly a ramification of the inlet temperature differing from the ambient temperature. For this problem (Fig.~\ref{Fig:ROM_TOutlet_LT_TInlet}), the outlet temperature $\vartheta_{\mathrm{outlet}} = 312.4 \; \mathrm{K}$ is lesser than the inlet temperature $\vartheta_{\mathrm{inlet}} = 315 \; \mathrm{K}$. If the same problem is solved with the inlet temperature smaller than or equal to the ambient temperature, the minimum principle (Theorem \ref{Thm:ROM_Qualitative_Minimum_principle_Radiation}) implies the reverse ordering: the outlet temperature is greater than the inlet temperature. 
    \item Thus, it disproves a commonly held belief in thermal regulation that the outlet temperature is higher than the inlet temperature if the heat is supplied to the body by the heater (i.e., when the source is non-negative: $f(\mathbf{x}) \geq 0$ in the entire domain). 
\end{enumerate}

\begin{table}[h]
\caption{This table provides the parameters used in the numerical simulation. These parameters are taken from the literature \citep{devi2021microvascular,pejman2019gradient}; these values correspond to the glass-fiber-reinforced (GFRP) composite as the host solid and water as the coolant. \label{Table:ROM_Qualitative_Simulation_parameters}}
\begin{tabular}{ll}\hline 
Length of the domain $L$ & 100 mm \\
Height of the domain $H$ & 100 mm \\
Thickness of the body $d$ & 4.31 mm \\
Isotropic conductivity of the host solid $k$ & 0.5593 $\mathrm{W/m/K}$ \\
Applied heater power $f_0$ & 500 $\mathrm{W/m^2}$ \\ 
Convective heat transfer coefficient $h_T$ & 13 $\mathrm{W/m^2/K}$ \\ 
Emissivity $\varepsilon$ & 0.95 \\
Stefan-Boltzmann constant $\sigma$ & $5.67 \times 10^{-8}$ $\mathrm{W/m^2/K^4}$ \\ 
Ambient temperature $\vartheta_{\mathrm{amb}}$ & 295.15 K 
(22 $^\circ$C) \\
Inlet temperature $\vartheta_{\mathrm{Inlet}}$ & 315 K \\ 
Specific heat capacity of the fluid $c_f$ & 4183 $\mathrm{J/kg/K}$ \\
Density of the fluid $\rho_f$ & 1000 $\mathrm{kg/m^3}$ \\ 
Volumetric flow rate $Q$ & $11.564 \times 10^{-3}$ $\mathrm{L/min}$ \\ 
Mass flow rate $\dot{m}$ & $\dot{m} = \rho_{f} \, Q = 11.564 \times 10^{-3}$ $\mathrm{kg/min}$ \\ 
Heat capacity rate $\chi$ & $\chi = \dot{m} \, c_f = 48.372$ $\mathrm{J/min/K}$ \\ 
\hline 
\end{tabular}
\end{table}

\begin{figure}[h]
    \centering
    \includegraphics[scale=1.65]{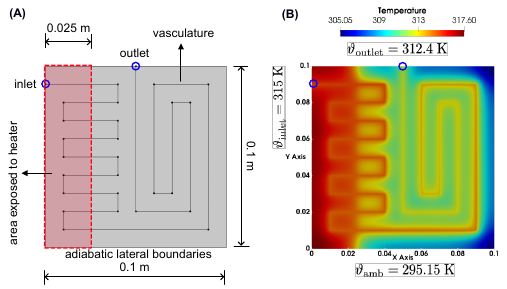}
    \caption{This figure illustrates one of the main findings of this paper: the outlet temperature can be lower than the inlet temperature (i.e.,  $\vartheta_{\mathrm{outlet}} \leq \vartheta_{\mathrm{inlet}}$) even under a heat source. (A) A pictorial description of the boundary value problem, showing the vasculature and the locations of the inlet and outlet. Only one-fourth of the domain is exposed to a constant heat source as indicated. All the lateral boundaries are adiabatic. (B) The temperature field in the domain is shown, indicating the inlet and outlet temperatures.
    \label{Fig:ROM_TOutlet_LT_TInlet}}
\end{figure}

Given that the inlet and outlet temperatures can be comparatively smaller or larger, several scientific inquiries on the nature of the temperature field along the vasculature follow naturally. One such fundamental question is:  \emph{Does the temperature always vary monotonically (either increasing or decreasing) along the vasculature?} This answered question warrants further research. 

\section{APPROPRIATE DEFINITION FOR EFFICIENCY}
\label{Sec:S7_ROM_Qualitative_Efficiency}

In prior studies on active cooling, the \emph{cooling efficiency} is defined as the ratio of the rate of heat extracted by the coolant to the total heat supplied by the heat source (heater); for example, see \citep{devi2021microvascular}. That is, 
\begin{align}
     \label{Eqn:ROM_standard_efficiency_general}
    \eta^{e} := \frac{\mbox{rate of heat extracted by the coolant}}{\mbox{rate of heat supplied by the heater}}
\end{align}
where $\eta^{e}$ denotes the cooling efficiency. If the lateral boundaries are adiabatic ($\Gamma^{q} = \partial \Omega$ and $q_{\mathrm{p}}(\mathbf{x}) = 0$) then $\eta^{e}$ takes the following mathematical form:
\begin{align}
    \label{Eqn:ROM_standard_efficiency}
    \eta^{e} 
    &= \frac{\int_{\Sigma} \chi \, \mathrm{grad}[\vartheta]\cdot \widehat{\mathbf{t}}(\mathbf{x}) \, \mathrm{d} \Gamma}{\int_{\Omega} f(\mathbf{x}) \, \mathrm{d} \Omega}
    = \chi \,
    \left(\int_{\Omega} f(\mathbf{x}) \, \mathrm{d} \Omega \right)^{-1}
    \left(\vartheta_{\mathrm{outlet}} - \vartheta_{\mathrm{inlet}}\right) 
\end{align}
where $\vartheta_{\mathrm{outlet}}$ is the temperature at the outlet. 

\subsection{Applicability}
Many prior studies, including the one cited above, have assumed a constant heat source $f(\mathbf{x}) = f_0$ and $\vartheta_{\mathrm{amb}} = \vartheta_{\mathrm{inlet}}$. Under such conditions, the above definition for efficiency renders values between 0 and 1. To wit, in Section \ref{Sec:S6_ROM_TOutlet_LT_TInlet}, we have shown that $\vartheta_{\mathrm{outlet}} \geq \vartheta_{\mathrm{inlet}}$ when the entire lateral boundary is adiabatic and the heat source is a constant in the entire domain, thereby rendering $\eta^{e} \geq 0$. To show the upper bound, we use the energy balance in the entire domain to write the following: 
\begin{align}
    \eta^{e} 
    &= 1 - \frac{\mbox{rate of heat lost due to convection + rate of heat lost due to radiation}}{\mbox{rate of heat supplied by the heater}} \notag \\
    &= 1 - \frac{1}{f_0 \, \mathrm{meas}(\Omega)} 
    \left[
    \int_{\Omega}h_T (\vartheta(\mathbf{x}) - \vartheta_{\mathrm{amb}}) \, \mathrm{d} \Omega
    + \int_{\Omega}\varepsilon \, \sigma \, (\vartheta^{4}(\mathbf{x}) - \vartheta^{4}_{\mathrm{amb}}) \, \mathrm{d} \Omega
    \right]
\end{align}
Since $\vartheta_{\mathrm{amb}} \leq \vartheta(\mathbf{x})$ (on the account of the minimum principle when $\vartheta_{\mathrm{inlet}} = \vartheta_{\mathrm{amb}}$) and noting that $f_0 \geq 0$, $\mathrm{meas}(\Omega) > 0$, $h_T \geq 0$, $\varepsilon \geq 0$, and $\sigma > 0$, we have $\eta^{e} \leq 1$. 

However, the above definition (Eq.~\eqref{Eqn:ROM_standard_efficiency_general} or \eqref{Eqn:ROM_standard_efficiency}) is not applicable in general. Three such scenarios are:
\begin{enumerate}
    \item When $\vartheta_{\mathrm{inlet}} >  \vartheta_{\mathrm{amb}}$, from the previous section, we know that $\vartheta_{\mathrm{inlet}} \geq \vartheta_{\mathrm{outlet}}$ for a non-uniform heat source. Hence, the efficiency based on Eq.~\eqref{Eqn:ROM_standard_efficiency} will be negative. 
    \item When $\vartheta_{\mathrm{inlet}} < \vartheta_{\mathrm{amb}}$, the efficiency given by  Eq.~\eqref{Eqn:ROM_standard_efficiency} can exceed the unity. \textbf{Figure~\ref{Fig:Efficiency_eta_GT_unity}} presents a numerical result, using the finite element method, verifying the efficiency $\eta^{e}$ exceeding one. (The Galerkin weak formulation \eqref{Eqn:ROM_Qualitative_Galerkin_weak_form_Radiation} and \citet{COMSOL} are used to generate even this numerical result.)
\item Of course, the above definition \eqref{Eqn:ROM_standard_efficiency} is not applicable for active heating; if used, $\eta^{e}$ will be negative. 
\end{enumerate}
Hence, for studies on active-cooling thermal regulation with $\vartheta_{\mathrm{inlet}} \neq \vartheta_{\mathrm{amb}}$, we suggest that $\eta^{e}$ be referred to as the \emph{coefficient of performance}, rather than cooling efficiency.

\begin{figure}[h]
    \centering
    \includegraphics[scale=1.65]{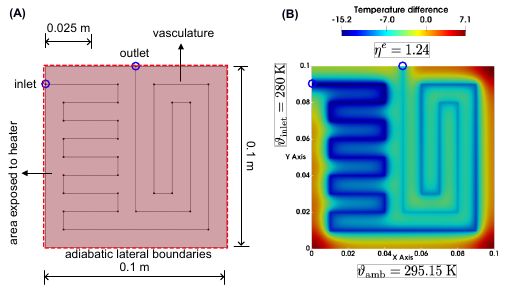}
    \caption{This figure shows that the widely used definition for efficiency could render a value greater than unity. (A) A pictorial description of the boundary value problem. The parameters used in this simulation are the same as provided in Table \ref{Table:ROM_Qualitative_Simulation_parameters} except for $\vartheta_{\mathrm{inlet}} = 280 \; \mathrm{K}$ and the applied heat flux $f_0 = 500$ $\mathrm{W/m^2}$ is over the entire domain. The calculated efficiency is $\eta^{e} = 1.24$ (cf. Eq.~\eqref{Eqn:ROM_standard_efficiency_general}). (B) The profile of the \emph{difference} between the spatial temperature field and the ambient temperature (i.e., $\vartheta(\mathbf{x}) - \vartheta_{\mathrm{amb}}$) is shown. There are regions where this difference is negative, implying that the heat flows into the body from the ambient space. At the same time, there are other regions where this difference is positive. It so happens that the net heat transfer is into the system from the ambient space. Given that the lateral boundaries are adiabatic, energy balance implies that the flowing fluid extracts this heat from the ambient besides the heat from the heater, rendering the efficiency $\eta^{e}$ greater than the unity.  \label{Fig:Efficiency_eta_GT_unity}}
\end{figure}

\subsection{Alternative definitions}
Due to the mentioned deficiencies in the currently used definition of efficiency for thermal regulation, we define alternatives, guided by the mathematical principles derived in the previous sections. We restrict these definitions to a constant heat source (i.e., $f(\mathbf{x}) = f_0$ in $\Omega$). 

\subsubsection{Active cooling ($\vartheta_{\mathrm{inlet}} \leq \vartheta_{\mathrm{HSS}}$)} A candidate for \emph{cooling efficiency}---the efficiency under active cooling---is:  
\begin{align}
    \label{Eqn:ROM_Qualitative_eta_active_cooling}
    \eta^{e}_{\mathrm{cooling}} = \frac{\vartheta_{\mathrm{HSS}} - \vartheta_{\mathrm{mean}}}{\vartheta_{\mathrm{HSS}} - \min[\vartheta_{\mathrm{inlet}},\vartheta_{\mathrm{amb}}]}
\end{align}
The comparison principle implies that, for a \emph{constant} heat source, the lower and upper bounds for the mean temperature are $\vartheta_{\mathrm{inlet}}$ and $\vartheta_{\mathrm{HSS}}$, respectively. These bounds imply that  
\begin{align}
    0 \leq \eta^{e}_{\mathrm{cooling}}
    \leq \eta^{e}_{\mathrm{max}}
\end{align}
where the upper bound $\eta^{e}_{\mathrm{max}}$---referred to as the \emph{maximum cooling efficiency}---is defined as: 
\begin{align}
    \label{Eqn:ROM_Qualitative_max_cooling_efficiency}
    \eta^{e}_{\mathrm{max}}
    := 
    \left\{\begin{array}{ll}
    1 & \mathrm{if} \; \vartheta_{\mathrm{inlet}} \leq \vartheta_{\mathrm{amb}} \\
    \frac{\vartheta_{\mathrm{HSS}} - \vartheta_{\mathrm{inlet}} }{\vartheta_{\mathrm{HSS}} - \vartheta_{\mathrm{amb}}} & 
    \mathrm{if} \; \vartheta_{\mathrm{inlet}} \geq \vartheta_{\mathrm{amb}} \\
    \end{array}\right.
\end{align}

For the problem shown in \textbf{Fig.~\ref{Fig:Efficiency_eta_GT_unity}}, we have 
$\vartheta_{\mathrm{mean}} = 288.63 \; \mathrm{K}$, $\vartheta_{\mathrm{HSS}} = 323.8 \; \mathrm{K}$,  $\vartheta_{\mathrm{input}} = 280 \; \mathrm{K}$, and $\vartheta_{\mathrm{amb}} = 295.15 \; \mathrm{K}$. For these values, the newly defined cooling efficiency amounts to: 
\begin{align}
    \eta^{e}_{\mathrm{cooling}} &=  \frac{323.8  - 288.63}{323.8 - 280} = 0.803 
\end{align}
which is more meaningful than $\eta^{e} = 1.24$. 

The proposed definition for active cooling \eqref{Eqn:ROM_Qualitative_eta_active_cooling} resembles that of a steam engine: as a ratio of two temperatures \citep{cengel2011thermodynamics}. Notably, the maximum cooling efficiency is independent of the conductivity of the host solid, flow rate within the vasculature, properties of the fluid (coolant), and the nature of the vasculature. Said differently, whatever one chooses for the mentioned quantities, the efficiency of active cooling cannot exceed the \emph{maximum cooling efficiency}. Although the maximum cooling efficiency has a similar looking as the Carnot efficiency---the maximum possible efficiency of a steam engine, both have different connotations. Carnot efficiency is the maximum possible fraction (or percentage) of the heat that can be converted into mechanical work in a steam engine \citep{atkins2010laws}. On the other hand, there is \emph{no mechanical work} involved in the definitions for the cooling efficiency \eqref{Eqn:ROM_Qualitative_eta_active_cooling} and the maximum cooling efficiency \eqref{Eqn:ROM_Qualitative_max_cooling_efficiency} under thermal regulation. 

\subsubsection{Active heating ($\vartheta_{\mathrm{inlet}} \geq \vartheta_{\mathrm{HSS}}$)} A candidate for efficiency under active heating can be: 
\begin{align}
    \label{Eqn:ROM_Qualitative_eta_active_heating}
    \eta^{e}_{\mathrm{heating}} = \frac{\vartheta_{\mathrm{mean}} - \vartheta_{\mathrm{HSS}}}{\max[\vartheta_{\mathrm{inlet}},\vartheta_{\mathrm{amb}}] - \vartheta_{\mathrm{HSS}}}
\end{align}
The comparison principle implies that $\vartheta_{\mathrm{HSS}} \leq \vartheta(\mathbf{x})$, which further implies that $\vartheta_{\mathrm{HSS}} \leq \vartheta_{\mathrm{mean}}$. The maximum principle implies that $\max[\vartheta_{\mathrm{input}},\vartheta_{\mathrm{amb}}] \geq \vartheta_{\mathrm{input}} \geq \vartheta_{\mathrm{HSS}}$. On these accounts, we have:
\begin{align}
    0 \leq \eta_{\mathrm{heating}}^{e} \leq 1
\end{align}
where the left bound is achieved when there is no flow in the vasculature (i.e., no active heating), meaning $\vartheta_{\mathrm{mean}} = \vartheta_{\mathrm{HSS}}$. 

Finally, it is still an open question: \emph{what is an appropriate measure of efficiency for thermal regulation applications when the heat source is not uniform?}  

\section{CLOSURE}
\label{Sec:S8_ROM_Qualitative_Closure}

This paper presented a reduced-order model for thermal regulation in vascular systems---applicable for active cooling and active heating. This general mathematical model allows the inlet temperature to differ from the ambient temperature, the conductivity of the host solid to be anisotropic, a body's surface to convect and radiate freely, and heat exchange via the flow of fluids within an embedded vasculature. The main contribution reveals several qualitative properties that thermal regulation in vascular systems possesses. These properties---presented as minimum, maximum, and comparison principles---place bounds on the point-wise temperature field, mean temperature, and temperature at the outlet of the vasculature. Moreover, these mathematical properties reveal the inadequacy of the currently used efficiency metric for thermal regulation, corroborated by numerical simulations as well. Precisely, the widely used definition---the ratio of heat extracted by fluid in a vasculature to the heat supplied by a source---does not always render a value between 0 and 1. These violations---uncharacteristic of a legitimate definition for efficiency---manifest when the inlet temperature differs from ambient and for active heating. Therefore, guided by the derived principles, the paper proposes alternative definitions of efficiency for both active cooling and active heating. 

The mentioned qualitative properties, bounds on the temperature, and clarification on efficiency will guide designers in building new thermal regulation systems. This newfound understanding will reduce trial and error and facilitate optimized thermal management in synthetic material systems. 

Several open research questions were peppered throughout the paper, providing adequate motivation and background. Addressing these questions will undoubtedly advance the field of thermal regulation. Two such questions are: 
\begin{enumerate}[Q1)]
\item How does the temperature field vary along the vasculature? Under what conditions is this variation monotonic? 
\item How to quantify efficiency, in general, for thermal regulation in vascular systems? 
\end{enumerate}
We also encourage an experimental study with different inlet and ambient temperatures to (a) validate the ramifications, predicted in this paper, of such a condition on the thermal regulation and (b) supplement modeling efforts in answering the two questions mentioned above.

The following quote sums up this paper's intended outcome: 
\begin{quote}
``\emph{The further a mathematical theory is developed, the more harmoniously and uniformly does its construction proceed, and unsuspected relations are disclosed between hitherto separated branches of the science.}" --- \textsc{David Hilbert}
\end{quote}

\appendix 
\section{ALTERNATIVE PROOF} 
\label{ROM_Qualitative_Appendix}

We will provide an alternative proof for establishing the mean temperature is bounded below by $\vartheta_{\mathrm{inlet}}$ and above by $\vartheta_{\mathrm{HSS}}$ for the \emph{special case} considered in Section \ref{Sec:S5_ROM_Qualitative_Special_case}. Taking a different approach than the earlier one, the proof in this appendix does not appeal to the comparison principle. Given its simplicity, this alternative proof possesses pedagogical value---accessible and beneficial to engineers and students.

Recall the assumptions under the special case (cf. \S\ref{Sec:S5_ROM_Qualitative_Special_case}): 
\begin{enumerate}
    \item The heat source is constant: $f(\mathbf{x}) = f_0$. 
    \item The entire lateral boundary is adiabatic (i.e., $\partial \Omega = \Gamma^{q}$ and $q_{\mathrm{p}}(\mathbf{x}) = 0$). 
    \item The inlet temperature is lower than the hot steady-state (i.e., $\vartheta_{\mathrm{inlet}} \leq \vartheta_{\mathrm{HSS}}$). 
\end{enumerate}

We start the proof by multiplying both sides of Eq.~\eqref{Eqn:ROM_Qualitative_BoE_general} by $(\vartheta(\mathbf{x}) - \vartheta_{\mathrm{inlet}})$ and integrating over the entire domain:  
\begin{align}
 \int_{\Omega} -d \, (\vartheta(\mathbf{x}) - \vartheta_{\mathrm{inlet}}) \, \mathrm{div}\big[\mathbf{K}(\mathbf{x}) \mathrm{grad}[\vartheta]\big] \, \mathrm{d} \Omega    
 &= \int_{\Omega} f_0 \, (\vartheta(\mathbf{x}) - \vartheta_{\mathrm{inlet}}) \, \mathrm{d} \Omega \nonumber \\
 &\qquad - \int_{\Omega} h_{T} (\vartheta(\mathbf{x}) - \vartheta_{\mathrm{inlet}}) 
 (\vartheta(\mathbf{x}) - \vartheta_{\mathrm{amb}}) \, \mathrm{d} \Omega \notag \\
 &\qquad - \int_{\Omega} \varepsilon 
 \, \sigma \, (\vartheta(\mathbf{x}) - \vartheta_{\mathrm{inlet}}) 
 \Big(\vartheta^{4}(\mathbf{x}) - \vartheta^{4}_{\mathrm{amb}}\Big) \, \mathrm{d} \Omega
\end{align}
Invoking the Green's identity on the first term and then using the jump conditions 
\eqref{Eqn:ROM_Qualitative_temp_jump_condition_general} and  \eqref{Eqn:ROM_Qualitative_q_jump_condition_general}, we get the following: 
\begin{align}
    \int_{\Omega} d \,
    \mathrm{grad}[\vartheta] 
    \cdot \mathbf{K}(\mathbf{x}) \mathrm{grad}[\vartheta] 
    \, \mathrm{d} \Omega
    &-\int_{\partial \Omega} d \, (\vartheta(\mathbf{x}) - \vartheta_{\mathrm{inlet}}) \,
    \widehat{\mathbf{n}}(\mathbf{x}) \cdot \mathbf{K}(\mathbf{x}) \mathrm{grad}[\vartheta] 
    \, \mathrm{d} \Gamma \nonumber \\
    &+ \int_{\Sigma} \chi \, (\vartheta(\mathbf{x}) - \vartheta_{\mathrm{inlet}}) \, \mathrm{grad}[\vartheta] \cdot \widehat{\mathbf{t}}(\mathbf{x}) \, \mathrm{d} \Gamma 
    \notag \\ 
    &= \int_{\Omega} f_0 \, (\vartheta(\mathbf{x}) - \vartheta_{\mathrm{inlet}}) \, \mathrm{d} \Omega 
    - \int_{\Omega} h_{T} (\vartheta(\mathbf{x}) - \vartheta_{\mathrm{inlet}})
    (\vartheta(\mathbf{x}) - \vartheta_{\mathrm{amb}})
    \, \mathrm{d} \Omega \notag \\
    &\qquad - \int_{\Omega} \varepsilon 
    \, \sigma \, (\vartheta(\mathbf{x}) - \vartheta_{\mathrm{inlet}}) 
    \Big(\vartheta^{4}(\mathbf{x}) - \vartheta^{4}_{\mathrm{amb}}\Big) \, \mathrm{d} \Omega
\end{align}
The first integral is non-negative, as the thermal conductivity is bounded below (i.e., positive definite) and $d > 0$. The second integral vanishes on the account of Eq.~\eqref{Eqn:ROM_Qualitative_q_BC_general}. Noting that $\vartheta_{\mathrm{inlet}}$ and $\chi$ are constants, we integrate along the third integral along the vasculature and simplify it as follows: 
\begin{align}
 \label{Eqn:ROM_Sensitivity_Step_3}
 \int_{\Sigma} \chi \, (\vartheta(\mathbf{x}) - \vartheta_{\mathrm{inlet}}) \, \mathrm{grad}[\vartheta] 
 \cdot \widehat{\mathbf{t}}(\mathbf{x}) \, \mathrm{d} \Gamma 
 &= \int_{\Sigma} \frac{\chi}{2} \, \mathrm{grad}\big[(\vartheta(\mathbf{x}) - \vartheta_{\mathrm{inlet}})^2\big]  
 \cdot \widehat{\mathbf{t}}(\mathbf{x}) 
 \, \mathrm{d} \Gamma \notag \\
 &= \frac{\chi}{2}
 (\vartheta_{\mathrm{outlet}} - \vartheta_{\mathrm{inlet}})^2 \geq 0 
\end{align}
Note that we have used $\chi \geq 0$ in establishing the non-negativity. 

Based on the above account, Eq.~\eqref{Eqn:ROM_Sensitivity_Step_3} provides the following inequality: 
\begin{align}
    \label{Eqn:ROM_Sensitivity_Appendix_Step_4}
    \int_{\Omega} f_0 \, (\vartheta(\mathbf{x}) - \vartheta_{\mathrm{inlet}}) \, \mathrm{d} \Omega 
    &\geq \int_{\Omega} h_{T} \,
    (\vartheta(\mathbf{x}) - \vartheta_{\mathrm{inlet}})
    (\vartheta(\mathbf{x}) - \vartheta_{\mathrm{amb}}) 
    \, \mathrm{d} \Omega \notag \\
    &\qquad + \int_{\Omega} \varepsilon \, \sigma \, (\vartheta(\mathbf{x}) - \vartheta_{\mathrm{inlet}}) \Big(\vartheta^{4}(\mathbf{x}) - \vartheta^{4}_{\mathrm{amb}}\Big) \, \mathrm{d} \Omega
\end{align}
Multiplying the both sides of  Eq.~\eqref{Eqn:ROM_Sensitivity_THSS_definition}---the  definition of $\vartheta_{\mathrm{HSS}}$---by $(\vartheta(\mathbf{x}) - \vartheta_{\mathrm{inlet}})$ and integrating over the domain, we get: 
\begin{align}
    \label{Eqn:ROM_Sensitivity_Appendix_Step_5}
    \int_{\Omega} f_0 \, (\vartheta(\mathbf{x}) - \vartheta_{\mathrm{inlet}}) \, \mathrm{d} \Omega 
    &= \int_{\Omega} h_{T} \, (\vartheta(\mathbf{x}) - \vartheta_{\mathrm{inlet}}) (\vartheta_{\mathrm{HSS}} - \vartheta_{\mathrm{amb}}) \, \mathrm{d} \Omega 
    \notag \\ 
    &\qquad + \int_{\Omega} \varepsilon \, \sigma \, (\vartheta(\mathbf{x}) - \vartheta_{\mathrm{inlet}}) \Big(\vartheta^{4}_{\mathrm{HSS}} - \vartheta^{4}_{\mathrm{amb}}\Big) \, \mathrm{d} \Omega 
\end{align}
By subtracting Eq.~\eqref{Eqn:ROM_Sensitivity_Appendix_Step_5} from Eq.~\eqref{Eqn:ROM_Sensitivity_Appendix_Step_4}, we get: 
\begin{align}
    \label{Eqn:ROM_Sensitivity_Appendix_Step_6}
    \int_{\Omega} h_{T} \,
    (\vartheta(\mathbf{x}) - \vartheta_{\mathrm{inlet}})
    (\vartheta(\mathbf{x}) - \vartheta_{\mathrm{HSS}}) 
    \, \mathrm{d} \Omega 
    + \int_{\Omega} \varepsilon \, \sigma \, (\vartheta(\mathbf{x}) - \vartheta_{\mathrm{inlet}}) \Big(\vartheta^{4}(\mathbf{x}) - \vartheta^{4}_{\mathrm{HSS}}\Big) \, \mathrm{d} \Omega
    \leq 0 
\end{align}

\subsection{Lower bound} 
We rearrange the terms in the above inequality to obtain the following: 
\begin{align}
    \label{Eqn:ROM_Sensitivity_Appendix_Step_7}
    \int_{\Omega} h_{T} \,
    (\vartheta(\mathbf{x}) - \vartheta_{\mathrm{inlet}})^{2}
    \, \mathrm{d} \Omega 
    &+ \int_{\Omega} \varepsilon \, \sigma \, (\vartheta(\mathbf{x}) - \vartheta_{\mathrm{inlet}}) \Big(\vartheta^{4}(\mathbf{x}) - \vartheta^{4}_{\mathrm{inlet}}\Big) \, \mathrm{d} \Omega
    \notag \\ 
    &\qquad \leq  
    (\vartheta_{\mathrm{HSS}} - \vartheta_{\mathrm{inlet}}) 
    \int_{\Omega} h_{T} \,
    (\vartheta(\mathbf{x}) - \vartheta_{\mathrm{inlet}})
    \, \mathrm{d} \Omega \notag \\
    &\qquad \qquad + \Big(\vartheta^{4}_{\mathrm{HSS}} - \vartheta^{4}_{\mathrm{inlet}}\Big) \int_{\Omega} \varepsilon \, \sigma \, (\vartheta(\mathbf{x}) - \vartheta_{\mathrm{inlet}}) \, \mathrm{d} \Omega
\end{align}
The second integral on the left side of the inequality is non-negative. To wit, 
\begin{align}
    \label{Eqn:ROM_Sensitivity_Appendix_Step_8}
    (\vartheta(\mathbf{x}) - \vartheta_{\mathrm{inlet}}) \Big(\vartheta^{4}(\mathbf{x}) - \vartheta^{4}_{\mathrm{inlet}}\Big) 
    = (\vartheta(\mathbf{x}) - \vartheta_{\mathrm{inlet}})^{2} 
    (\vartheta(\mathbf{x}) +  \vartheta_{\mathrm{inlet}}) \Big(\vartheta^{2}(\mathbf{x}) + \vartheta^{2}_{\mathrm{inlet}}\Big) 
    \geq 0 
\end{align}
as the temperature field and inlet temperature are non-negative. Given $\varepsilon \geq 0$ and $\sigma > 0$, the integrand is non-negative, thereby making the integral non-negative. Noting that the first integral is also non-negative, we get: 
\begin{align}
    \label{Eqn:ROM_Sensitivity_Appendix_Step_9}
    0 \leq  
    \left(h_T (\vartheta_{\mathrm{HSS}} - \vartheta_{\mathrm{inlet}}) + \varepsilon \, \sigma \Big(\vartheta^{4}_{\mathrm{HSS}} - \vartheta^{4}_{\mathrm{inlet}}\Big) \right) 
    \, \mathrm{meas}(\Omega) \, 
    (\vartheta_{\mathrm{mean}} - \vartheta_{\mathrm{inlet}})
\end{align}
Since $\vartheta_{\mathrm{inlet}} \leq \vartheta_{\mathrm{HSS}}$ and  $\mathrm{meas}(\Omega) > 0$, we have established the lower bound: $\vartheta_{\mathrm{inlet}} \leq \vartheta_{\mathrm{mean}}$. 

\subsection{Upper bound} 
We rearrange the terms in inequality \eqref{Eqn:ROM_Sensitivity_Appendix_Step_6} to obtain the following:
\begin{align}
    \label{Eqn:ROM_Sensitivity_Appendix_Step_10}
    \int_{\Omega} h_{T} \,
    (\vartheta(\mathbf{x}) - \vartheta_{\mathrm{HSS}})^{2}
    \, \mathrm{d} \Omega 
    &+ \int_{\Omega} \varepsilon \, \sigma \, (\vartheta(\mathbf{x}) - \vartheta_{\mathrm{HSS}}) \Big(\vartheta^{4}(\mathbf{x}) - \vartheta^{4}_{\mathrm{HSS}}\Big) \, \mathrm{d} \Omega
    \notag \\ 
    &\qquad \leq  
    (\vartheta_{\mathrm{HSS}} - \vartheta_{\mathrm{inlet}}) 
    \int_{\Omega} h_{T} \,
    (\vartheta_{\mathrm{HSS}} - \vartheta(\mathbf{x}))
    \, \mathrm{d} \Omega \notag \\
    &\qquad \qquad + (\vartheta_{\mathrm{HSS}} - \vartheta_{\mathrm{inlet}}) \int_{\Omega} \varepsilon \, \sigma \, \Big(\vartheta^{4}_{\mathrm{HSS}} - \vartheta^{4}(\mathbf{x})\Big) \, \mathrm{d} \Omega
\end{align}
Following a reasoning similar to the one used in establishing the lower bound, the left-hand side in the above inequality can be shown to be non-negative. Since $\vartheta_{\mathrm{inlet}} \leq  \vartheta_{\mathrm{HSS}}$, we thus have:
\begin{align}
    \label{Eqn:ROM_Sensitivity_Appendix_Step_11}
    0 \leq  
    \int_{\Omega} h_{T} \,
    (\vartheta_{\mathrm{HSS}} - \vartheta(\mathbf{x}))
    \, \mathrm{d} \Omega 
    + \int_{\Omega} \varepsilon \, \sigma \, \Big(\vartheta^{4}_{\mathrm{HSS}} - \vartheta^{4}(\mathbf{x})\Big) \, \mathrm{d} \Omega
\end{align}
Cauchy-Schwarz inequality on $L_2(\Omega)$ inner-product space implies \citep{garling2007inequalities}: 
\begin{align}
    \label{Eqn:ROM_Sensitivity_Appendix_Step_12}
    \int_{\Omega} \vartheta^{4}(\mathbf{x}) 
    \, \mathrm{d} \Omega \geq 
    \mathrm{meas}(\Omega) \;  \vartheta^{4}_{\mathrm{mean}}
\end{align}
By combining the above two inequalities we get
\begin{align}
    \label{Eqn:ROM_Sensitivity_Appendix_Step_13}
    0 \leq  
    h_T (\vartheta_{\mathrm{HSS}} - \vartheta_{\mathrm{mean}}) + \varepsilon \, \sigma \Big(\vartheta^{4}_{\mathrm{HSS}} - \vartheta^{4}_{\mathrm{mean}}\Big)
\end{align}
which furnishes us with the upper bound: $\vartheta_{\mathrm{mean}} \leq \vartheta_{\mathrm{HSS}}$. 

\section*{DATA AVAILABILITY}
The data that support the findings of this study are available from the corresponding author upon reasonable request.

\section*{ACKNOWLEDGEMENTS}
The author thanks Professor Jason F.~Patrick, North Carolina State University, for fruitful discussions on thermal regulation in synthetic composites. 

\bibliographystyle{plainnat}
\bibliography{Master_References}

\begin{thebibliography}{41}
\providecommand{\natexlab}[1]{#1}
\providecommand{\url}[1]{\texttt{#1}}
\expandafter\ifx\csname urlstyle\endcsname\relax
  \providecommand{\doi}[1]{doi: #1}\else
  \providecommand{\doi}{doi: \begingroup \urlstyle{rm}\Url}\fi

\bibitem[Arag{\'o}n et~al.(2010)Arag{\'o}n, Duarte, and
  Geubelle]{aragon2010generalized}
A.~M. Arag{\'o}n, C.~A. Duarte, and P.~H. Geubelle.
\newblock Generalized finite element enrichment functions for discontinuous
  gradient fields.
\newblock \emph{International Journal for Numerical Methods in Engineering},
  82\penalty0 (2):\penalty0 242--268, 2010.
\newblock \doi{0.1002/nme.2772}.

\bibitem[Atkins(2010)]{atkins2010laws}
P.~Atkins.
\newblock \emph{{The Laws of Thermodynamics: A Very Short Introduction}}.
\newblock Oxford University Press, Oxford, 2010.

\bibitem[Bartle(2014)]{bartle2014elements}
R.~G. Bartle.
\newblock \emph{{The Elements of Integration and Lebesgue Measure}}.
\newblock John Wiley \& Sons, New York, 2014.

\bibitem[Brown and Churchill(2009)]{brown2009complex}
J.~W. Brown and R.~V. Churchill.
\newblock \emph{{Complex Variables and Applications}}.
\newblock McGraw-Hill, New York, nineth edition, 2009.

\bibitem[\c{C}engel et~al.(2011)\c{C}engel, Boles, and
  Kanoglu]{cengel2011thermodynamics}
Y.~A. \c{C}engel, M.~A. Boles, and M.~Kanoglu.
\newblock \emph{{Thermodynamics: An Engineering Approach}}.
\newblock McGraw-Hill, New York, 2011.

\bibitem[{COMSOL Multiphysics}(2018)]{COMSOL}
{COMSOL Multiphysics}.
\newblock \emph{{Comsol User's Guide, Version 5.3}}.
\newblock COMSOL AB, Stockholm, Sweden, 2018.

\bibitem[Curtiss(1918)]{curtiss1918recent}
D.~R. Curtiss.
\newblock {Recent extentions of Descartes' rule of signs}.
\newblock \emph{Annals of Mathematics}, pages 251--278, 1918.
\newblock \doi{10.2307/1967494}.

\bibitem[Dede et~al.(2018)Dede, Zhou, Schmalenberg, and
  Nomura]{dede2018thermal}
E.~M. Dede, F.~Zhou, P.~Schmalenberg, and T.~Nomura.
\newblock Thermal metamaterials for heat flow control in electronics.
\newblock \emph{Journal of Electronic Packaging}, 140\penalty0 (1):\penalty0
  010904, 2018.
\newblock \doi{10.1115/1.4039020}.

\bibitem[Devi et~al.(2021)Devi, Pejman, Phillips, Zhang, Soghrati, Nakshatrala,
  Najafi, Schab, and Patrick]{devi2021microvascular}
U.~Devi, R.~Pejman, Z.~J. Phillips, P.~Zhang, S.~Soghrati, K.~B. Nakshatrala,
  A.~R. Najafi, K.~R. Schab, and J.~F. Patrick.
\newblock A microvascular-based multifunctional and reconfigurable
  metamaterial.
\newblock \emph{Advanced Materials Technologies}, 6\penalty0 (11):\penalty0
  2100433, 2021.
\newblock \doi{10.1002/admt.202100433}.

\bibitem[Dorfman(2009)]{dorfman2009conjugate}
A.~S. Dorfman.
\newblock \emph{{Conjugate Problems in Convective Heat Transfer}}.
\newblock CRC Press, Boca Raton, 2009.

\bibitem[Esser-Kahn et~al.(2011)Esser-Kahn, Thakre, Dong, Patrick,
  Vlasko-Vlasov, Sottos, Moore, and White]{esser2011three}
A.~P. Esser-Kahn, P.~R. Thakre, H.~Dong, J.~F. Patrick, V.~K. Vlasko-Vlasov,
  N.~R. Sottos, J.~S. Moore, and S.~R. White.
\newblock Three-dimensional microvascular fiber-reinforced composites.
\newblock \emph{Advanced Materials}, 23\penalty0 (32):\penalty0 3654--3658,
  2011.
\newblock \doi{10.1002/adma.201100933}.

\bibitem[Evans(1998)]{evans1998partial}
L.~C. Evans.
\newblock \emph{{Partial Differential Equations}}.
\newblock Graduate Studies in Mathematics, American Mathematical Society, Rhode
  Island, 1998.

\bibitem[Finke(1990)]{finke1990calculation}
R.~G. Finke.
\newblock Calculation of reentry-vehicle temperature history.
\newblock Technical report, Institute for defense analyses, Alexandria,
  Virginia, 1990.
\newblock Accession Number: ADA231552.

\bibitem[Garling(2007)]{garling2007inequalities}
D.~J.~H. Garling.
\newblock \emph{{Inequalities: A Journey into Linear Analysis}}.
\newblock Cambridge University Press, New York, 2007.

\bibitem[Gelfand and Fomin(2000)]{gelfand2000calculus}
I.~M. Gelfand and S.~V. Fomin.
\newblock \emph{{Calculus of Variations}}.
\newblock Dover Publications, Inc., Mineola, 2000.

\bibitem[Gilbarg and Trudinger(2015)]{gilbarg2015elliptic}
D.~Gilbarg and N.~S. Trudinger.
\newblock \emph{{Elliptic Partial Differential Equations of Second Order}}.
\newblock Springer-Verlag, New York, 2015.

\bibitem[Gilmore(2002)]{gilmore2002spacecraft}
D.~G. Gilmore.
\newblock \emph{{Spacecraft Thermal Control Handbook: Fundamental
  Technologies}}, volume~1.
\newblock American Institute of Aeronautics \& Astronautics, Inc., Reston,
  2002.

\bibitem[Gou et~al.(2019)Gou, Chang, Yan, Chen, and Gong]{gou2019design}
J.~J. Gou, Y.~Chang, Z.~W. Yan, B.~Chen, and C.~L. Gong.
\newblock The design of thermal management system for hypersonic launch
  vehicles based on active cooling networks.
\newblock \emph{Applied Thermal Engineering}, 159:\penalty0 113938, 2019.
\newblock \doi{10.1016/j.applthermaleng.2019.113938}.

\bibitem[Hall and Hall(2020)]{hall2020guyton}
J.~E. Hall and M.~E. Hall.
\newblock \emph{{Guyton and Hall Textbook of Medical Physiology}}.
\newblock Elsevier Health Sciences, St.~Louis, fourteenth edition, 2020.

\bibitem[Hill and Veghte(1976)]{hill1976jackrabbit}
R.~W. Hill and J.~H. Veghte.
\newblock Jackrabbit ears: surface temperatures and vascular responses.
\newblock \emph{Science}, 194\penalty0 (4263):\penalty0 436--438, 1976.
\newblock \doi{10.1126/science.982027}.

\bibitem[Karimi and Nakshatrala(2016)]{karimi2016current}
S.~Karimi and K.~B. Nakshatrala.
\newblock {Do current lattice Boltzmann methods for diffusion and
  advection-diffusion equations respect maximum principle and the non-negative
  constraint?}
\newblock \emph{Communications in Computational Physics}, 20\penalty0
  (2):\penalty0 374--404, 2016.
\newblock \doi{/10.4208/cicp.181015.270416a}.

\bibitem[Kozola et~al.(2010)Kozola, Shipton, Natrajan, Christensen, and
  White]{kozola2010characterization}
B.~D. Kozola, L.~A. Shipton, V.~K. Natrajan, K.~T. Christensen, and S.~R.
  White.
\newblock Characterization of active cooling and flow distribution in
  microvascular polymers.
\newblock \emph{Journal of Intelligent Material Systems and Structures},
  21\penalty0 (12):\penalty0 1147--1156, 2010.
\newblock \doi{10.1177/1045389X10379662}.

\bibitem[{Living Bird Magazine}(2017)]{livingbirdmagazine}
{Living Bird Magazine}.
\newblock {How Do Gulls Deal With Cold Feet?}, Winter 2017.
\newblock URL
  \url{https://www.allaboutbirds.org/news/how-do-gulls-deal-with-cold-feet/}.
\newblock accessed on: October 10, 2021.

\bibitem[Love(2013)]{love2013treatise}
A.~E.~H. Love.
\newblock \emph{{A Treatise on the Mathematical Theory of Elasticity}}.
\newblock Cambridge University Press, Cambridge, 2013.

\bibitem[Lyall et~al.(2008)Lyall, Williams, Arritt, and
  Taft]{lyall2008experimental}
M.~Lyall, A.~Williams, B.~Arritt, and B.~Taft.
\newblock Experimental analysis of a biologically inspired thermal-structural
  satellite panel.
\newblock In \emph{49th AIAA/ASME/ASCE/AHS/ASC Structures, Structural Dynamics,
  and Materials Conference}, page 1833, 2008.
\newblock \doi{10.2514/6.2008-1833}.

\bibitem[McElroy et~al.(2015)McElroy, Lawrie, and
  Bond]{mcelroy2015optimisation}
M.~W. McElroy, A.~Lawrie, and I.~P. Bond.
\newblock {Optimisation of an air film cooled CFRP panel with an embedded
  vascular network}.
\newblock \emph{International Journal of Heat and Mass Transfer}, 88:\penalty0
  284--296, 2015.
\newblock \doi{10.1016/j.ijheatmasstransfer.2015.04.071}.

\bibitem[McOwen(1996)]{mcowen1996partial}
R.~C. McOwen.
\newblock \emph{{Partial Differential Equations: Methods and Applications}}.
\newblock Prentice-Hall, Inc., Upper Saddle River, New Jersey, 1996.

\bibitem[Mudunuru and Nakshatrala(2016)]{mudunuru2016enforcing}
M.~K. Mudunuru and K.~B. Nakshatrala.
\newblock On enforcing maximum principles and achieving element-wise species
  balance for advection--diffusion--reaction equations under the finite element
  method.
\newblock \emph{Journal of Computational Physics}, 305:\penalty0 448--493,
  2016.
\newblock \doi{10.1016/j.jcp.2015.09.057}.

\bibitem[Nakshatrala et~al.(2013)Nakshatrala, Mudunuru, and
  Valocchi]{nakshatrala2013numerical}
K.~B. Nakshatrala, M.~K. Mudunuru, and A.~J. Valocchi.
\newblock A numerical framework for diffusion-controlled bimolecular-reactive
  systems to enforce maximum principles and the non-negative constraint.
\newblock \emph{Journal of Computational Physics}, 253:\penalty0 278--307,
  2013.
\newblock \doi{10.1016/j.jcp.2013.07.010}.

\bibitem[Nakshatrala et~al.(2016)Nakshatrala, Nagarajan, and
  Shabouei]{nakshatrala2016numerical}
K.~B. Nakshatrala, H.~Nagarajan, and M.~Shabouei.
\newblock A numerical methodology for enforcing maximum principles and the
  non-negative constraint for transient diffusion equations.
\newblock \emph{Communications in Computational Physics}, 19\penalty0
  (1):\penalty0 53--93, 2016.
\newblock \doi{10.4208/cicp.180615.280815a}.

\bibitem[Pao(1992)]{pao2012nonlinear}
C.~V. Pao.
\newblock \emph{{Nonlinear Parabolic and Elliptic Equations}}.
\newblock Plenum Press, New York, 1992.

\bibitem[Pejman et~al.(2019)Pejman, Aboubakr, Martin, Devi, Tan, Patrick, and
  Najafi]{pejman2019gradient}
R.~Pejman, S.~H. Aboubakr, W.~H. Martin, U.~Devi, M.~H.~Y. Tan, J.~F. Patrick,
  and A.~R. Najafi.
\newblock Gradient-based hybrid topology/shape optimization of bioinspired
  microvascular composites.
\newblock \emph{International Journal of Heat and Mass Transfer}, 144:\penalty0
  118606, 2019.
\newblock \doi{10.1016/j.ijheatmasstransfer.2019.118606}.

\bibitem[Perelman(1961)]{perelman1961conjugated}
T.~L. Perelman.
\newblock On conjugated problems of heat transfer.
\newblock \emph{International Journal of Heat and Mass Transfer}, 3\penalty0
  (4):\penalty0 293--303, 1961.
\newblock \doi{10.1016/0017-9310(61)90044-8}.

\bibitem[Pety et~al.(2017)Pety, Chia, Carrington, and White]{pety2017active}
S.~J. Pety, P.~X.~L. Chia, S.~M. Carrington, and S.~R. White.
\newblock Active cooling of microvascular composites for battery packaging.
\newblock \emph{Smart Materials and Structures}, 26\penalty0 (10):\penalty0
  105004, 2017.
\newblock \doi{10.1088/1361-665X/aa84e7}.

\bibitem[Reece et~al.(2020)Reece, Urry, Cain, Wasserman, Minorsky, and
  Jackson]{reece2014campbell}
J.~B. Reece, L.~A. Urry, M.~L. Cain, S.~A. Wasserman, P.~V. Minorsky, and R.~B.
  Jackson.
\newblock \emph{Campbell Biology}.
\newblock Pearson, Boston, 12th edition, 2020.

\bibitem[Rocha et~al.(2009)Rocha, Lorente, and Bejan]{rocha2009tree}
L.~Rocha, S.~Lorente, and A.~Bejan.
\newblock Tree-shaped vascular wall designs for localized intense cooling.
\newblock \emph{International Journal of Heat and Mass Transfer}, 52\penalty0
  (19-20):\penalty0 4535--4544, 2009.
\newblock \doi{10.1016/j.ijheatmasstransfer.2009.03.003}.

\bibitem[Scholander and Krog(1957)]{scholander1957countercurrent}
P.~F. Scholander and J.~Krog.
\newblock Countercurrent heat exchange and vascular bundles in sloths.
\newblock \emph{Journal of Applied Physiology}, 10\penalty0 (3):\penalty0
  405--411, 1957.
\newblock \doi{10.1152/jappl.1957.10.3.405}.

\bibitem[Selvadurai(2000)]{selvadurai2000partial}
A.~P.~S. Selvadurai.
\newblock \emph{{Partial Differential Equations in Mechanics 2: The Biharmonic
  equation, Poisson's Equation}}.
\newblock Springer-Verlag, Berlin, 2000.

\bibitem[Sperb(1981)]{sperb1981maximum}
R.~Sperb.
\newblock \emph{{Maximum Principles and Their Applications}}.
\newblock Academic Press, Inc., New York, 1981.

\bibitem[Tan et~al.(2015)Tan, Safdari, Najafi, and Geubelle]{tan2015nurbs}
M.~H.~Y. Tan, M.~Safdari, A.~R. Najafi, and P.~H. Geubelle.
\newblock {A NURBS-based interface-enriched generalized finite element scheme
  for the thermal analysis and design of microvascular composites}.
\newblock \emph{Computer Methods in Applied Mechanics and Engineering},
  283:\penalty0 1382--1400, 2015.
\newblock \doi{10.1016/j.cma.2014.09.008}.

\bibitem[Tan et~al.(2016)Tan, Najafi, Pety, White, and
  Geubelle]{tan2016gradient}
M.~H.~Y. Tan, A.~R. Najafi, S.~J. Pety, S.~R. White, and P.~H. Geubelle.
\newblock Gradient-based design of actively-cooled microvascular composite
  panels.
\newblock \emph{International Journal of Heat and Mass Transfer}, 103:\penalty0
  594--606, 2016.
\newblock \doi{10.1016/j.ijheatmasstransfer.2016.07.092}.

\end{thebibliography}
\end{document}